\newcounter{theorempart}[theorem]
\numberwithin{equation}{section} 
\Crefname{theorempart}{Theorem}{Theorems}
\newenvironment{cleverproof}[1]
  {\begin{proof}[Proof of \Cref{#1}]}
  {\end{proof}}
\newcommand{\E}{\mathbb{E}}
\newcommand{\N}{\mathbb{N}}
\newcommand{\Prob}{\mathbb{P}}
\title{Bayesian Optimal Stopping with Maximum Value Knowledge}
\author{Pieter Kleer and Daan Noordenbos\\\
Tilburg University\\
Department of Econometrics and Operations Research\\
\texttt{\{p.s.kleer,d.noordenbos\}@tilburguniversity.edu}}
\date{\today}
\begin{document}
\maketitle
\thispagestyle{empty}
\begin{abstract}
\noindent We consider an optimal stopping problem with $n$ correlated offers where the goal is to design a (randomised) stopping strategy that maximises the expected value of the offer at which we stop. Instead of assuming to know the complete correlation structure, which is unrealistic in practice, we only assume to have knowledge of the distribution of the maximum value $X_{\max}$ of the sequence, and want to analyse the worst-case correlation structure whose maximum follows this distribution. This can be seen as a trade-off between the setting in which no distributional information is known, and the Bayesian setting in which the (possibly correlated) distributions of all the individual offers are known.

As our first main result we show that a deterministic threshold strategy using the monopoly price of the distribution of the maximum value is asymptotically optimal assuming that the expectation of the maximum value grows sublinearly in $n$. In our second main result, we further tighten this bound by deriving a tight quadratic convergence guarantee for sufficiently smooth distributions of the maximum value.

Our results also give rise to a more fine-grained picture regarding prophet inequalities with correlated values, for which distribution-free bounds only yield a performance guarantee of the order $1/n$.
\end{abstract}

\newpage
\pagenumbering{arabic}
\setcounter{page}{1}   

\section{Introduction}
We study an optimal stopping problem where a seller has a single item for sale and sequentially receives $n$ offers $(v_1,\dots,v_n)$ from potential buyers.
When an offer is received, the seller must decide whether to accept the offer, and thereby passing up all future offers, or reject the offer and continue to the next one.
The goal is to maximise the (expected) value of the accepted offer, where the expectation is with respect to the randomness of the chosen stopping rule and distributional information regarding the offer sequence (if any).

When no distributional information about the sequence is known, but only its length $n$, it is folklore that the best stopping rule is to choose one of the $n$ points in the sequence uniformly at random as stopping point. This rule yields, in expectation over the randomness of the stopping rule, at least $1/n$ times the (actual) maximum offer value. When the offer values are drawn from a (possibly correlated) known joint distribution, Hill and Kertz \cite{hill1983stop} show that the best one can hope for is a stopping rule that yields an expected stopping value of the order $\mathbb{E}[X_{\max}]/n$, with $X_{\max}$ the maximum value of the sequence, if no further assumptions are made about the joint distribution; this is known as a correlated \textit{prophet inequality} result. There is a plethora of work on prophet inequalities, especially when the values of the offers are not correlated, but independent. We discuss the prophet inequality literature more extensively in Section \ref{sec:rel_work}.
Most relevant for us are recent works in which more structured correlations models have been explored in the context of prophet inequalities.
Immorlica et al.~\cite{immorlica2023prophet} consider a linear correlation model and show that the competitive ratio degrades gradually with increasing correlation.
Livanos et al.~\cite{livanos2024improved} obtain similar results for offers with graphical dependence.

These results seem to suggest that it is correlation that degrades the competitive ratio.
We provide a more nuanced perspective, namely that correlation only degrades the performance when the maximum offer behaves pathologically.

We therefore consider a new model, where instead of making assumptions about the correlation structure, we assume distributional knowledge about the maximum offer value. 
This is arguably a more realistic assumption than knowing the complete correlation structure, since correlations are difficult to estimate from data. This difficulty arises, because specifying correlations often requires many parameters and the correlation structure may change over time. Furthermore, only partial information, like the highest offer, may be available.
In contrast, the distribution of the maximum offer is often much easier to estimate from historical data or could more easily be approximated by an expert, making our model more robust and practical to use.

The goal is now to quantify the worst-case correlation structure whose maximum offer value follows the chosen distribution, against the best (randomised) stopping rule. That is, we first choose a stopping rule after which our adversary \textit{nature} may choose the worst-case correlation structure against this rule. More formal, we want to solve the sup-inf problem
$$
\sup_{\tau\in\mathcal{T}} \inf_{\boldsymbol{X}\in\mathcal{I}(F, n)} \mathbb{E}[\boldsymbol{X}_{\tau}],
$$
where $\mathcal{T}$ is the set of all randomised stopping rules, and $\mathcal{I}(F,n)$ the set of all joint distributions over sequences of $n$ values $(v_1,\dots,v_n)$ where the maximum of the sequence follows $F$; such a distribution is denoted by $\mathbf{X}$. We emphasise that the distribution $F$ is allowed to depend on $n$. Finally, $\mathbf{X}_{\tau}$ is the random offer value at which we stop. Formal definitions are given in Section \ref{sec:model}. For clarity, we will give an example of what instances in $\mathcal{I}(F, n)$ might look like.

\begin{example}
Suppose the distribution $F = F_{\max}$ is given by $\Prob(X_{\max} = 1) = 1/2$ and $\Prob(X_{\max} = 2) = 1/2$, and that $n = 4$. Then example elements (i.e., correlation structures) in $\mathcal{I}(F, n)$ are given in Table \ref{tab:instances}.
\begin{table}[h!]
    \centering
    \begin{minipage}[t]{0.48\textwidth}
    \centering
    \begin{tabular}{lcc}
        \toprule
        \textbf{Instance} & \( v_{\max} \) & \textbf{Probability} \\
        \midrule
        (0,1,1,0) & 1 & \( \frac{1}{2} \) \\[1ex]
        (2,0,1,1) & 2 & \( \frac{1}{4} \) \\[1ex]
        (0,1,\( \frac{3}{2} \),2) & 2 & \( \frac{1}{4} \) \\
        \bottomrule
    \end{tabular}
    \end{minipage}
    \hfill
    \begin{minipage}[t]{0.48\textwidth}
    \centering
    \begin{tabular}{lcc}
        \toprule
        \textbf{Instance} & \( v_{\max} \) & \textbf{Probability} \\
        \midrule
        (0,1,0,1) & 1 & \( \frac{1}{3} \) \\[1ex]
        (1,0,0,$\frac{1}{2}$) & 1 & \( \frac{1}{6} \) \\[1ex]
        (2,0,0,2) & 2 & \( \frac{1}{2} \) \\
        \bottomrule
    \end{tabular}
    \end{minipage}
    \caption{Two examples of elements $\mathbf{X} \in \mathcal{I}(F,n)$ with $\Prob(X_{\max} = 1) = \frac{1}{2}$ and $\Prob(X_{\max} = 2) = \frac{1}{2}$.}
    \label{tab:instances}
\end{table}

\end{example}
It turns out that the above sup-inf problem is equivalent to its inf-sup variant (by an application of the Von Neumann-Fan theorem; see Theorem \ref{thm:minimax-equality}), i.e., 
$
\sup_{\tau\in\mathcal{T}}\inf_{\boldsymbol{X}\in\mathcal{I}(F, n)}\mathbb{E}[\boldsymbol{X}_{\tau}]=\inf_{\boldsymbol{X}\in\mathcal{I}(F, n)}\sup_{\tau\in\mathcal{T}}\mathbb{E}[\boldsymbol{X}_{\tau}].
$
Because, for fixed $n$, the quantity $\mathbb{E}[X_{\max}]$ is constant, if we assume the expectation of $X_{\max}$ is finite, the inf-sup problem is equivalent to
$$
\inf_{\boldsymbol{X}\in\mathcal{I}(F_{\max}, n)}\sup_{\tau\in\mathcal{T}} \; \frac{\mathbb{E}[\boldsymbol{X}_{\tau}]}{\mathbb{E}[X_{\max}]}.
$$
This is called a \textit{prophet inequality} problem as it compares the expected payoff of a stopping rule against that of a ``prophet" who knows the complete value sequence and can always stop at the maximum value.

\subsection{Our contributions}
The main goal of our work is to understand the sup-inf problem above, which is formally stated in \eqref{eq:definition-v-star} in the next section. As a preliminary result, we show in Theorem \ref{thm:monopoly_worstcase} that the optimal deterministic stopping rule is a static threshold whose value is the \textit{monopoly price} 
$$
p^* = \text{argmax}_{ p \geq 0} \; p \cdot\mathbb{P}(X_{\max}\ge p) =:  \text{argmax}_{ p \geq 0} \; \Pi(p).
$$
That is, we select the first offer whose value exceeds $p^*$. Under this stopping rule, the expected value of the sup-inf problem is equal to 
$
\Pi^*(F_{\max}) :=  p^* \cdot\mathbb{P}(X_{\max}\ge p^*), 
$
i.e.,
$$
\Pi^*(F_{\max}) = \sup_{\tau\in\mathcal{D}} \inf_{\boldsymbol{X}\in\mathcal{I}(F_{\max}, n)} \mathbb{E}[\boldsymbol{X}_{\tau}],
$$
where $\mathcal{D}$ is the set of all deterministic stopping rules.

Motivated by Theorem \ref{thm:monopoly_worstcase}, we want to understand whether or not we can obtain a higher expected value by using a randomised stopping rule from $\mathcal{T}$. {Determining the exact randomized stopping rule that maximizes the expected offer value at which we stop seems rather difficult and cumbersome. Therefore, we instead aim for bounds on the performance of randomized algorithms that capture their potential.} In Section \ref{sec:first_bounds} we show in Theorem \ref{thm:non-optimality-determinstic} that, under a mild differentiability assumption on the cumulative distribution function of $F_{\max}$, there exists a randomised stopping rule that outperforms the best deterministic strategy. In contrast, we show in Theorem \ref{thm:randomised-bound-continuous-xmax} that for \textit{any} $F_{\max}$, 
$$
\left( \Pi^*(F_{\max})  \leq \; \right) \; \; \sup_{\tau\in\mathcal{T}} \inf_{\boldsymbol{X}\in\mathcal{I}(F_{\max}, n)} \mathbb{E}[\boldsymbol{X}_{\tau}] \leq \Pi^*(F_{\max}) + \frac{\mathbb{E}[X_{\max}]}{n}.
$$
This means that as long as the expectation of $X_{\max}$ grows sublinearly, the advantage of randomization disappears as $n$ grows large. We regard this to be a reasonable assumption on $X_{\max}$.\footnote{A notable exception to this assumption could be the case in which the market of buyers represents a ``stock market bubble" in which the maximum value for an item increases steeply as more buyers become interested in it.}

In Section \ref{sec:tight_bounds} we tighten the results of Section \ref{sec:first_bounds} for distributions $F_{\max}$ that are sufficiently smooth. In particular, we show in Theorem \ref{thm:special-cases} that if $F_{\max}$ is three times differentiable around $p^*$ and independent of $n$, then 
$$
\left( \Pi^*(F_{\max}) \leq \; \right) \; \;  \sup_{\tau\in\mathcal{T}} \inf_{\boldsymbol{X}\in\mathcal{I}(F_{\max}, n)} \mathbb{E}[\boldsymbol{X}_{\tau}] \leq \Pi^*(F_{\max})+\Theta\left(n^{-2}\right).
$$
Well-known parametric distributions such as the exponential, (half)-normal, and gamma distribution satisfy the conditions of Theorem \ref{thm:special-cases}.

Finally, in Section \ref{sec:prophets} we investigate natural conditions on $F_{\max}$ under which the prophet inequality inf-sup problem $\inf_{\boldsymbol{X}\in\mathcal{I}(F, n)}\sup_{\tau\in\mathcal{T}} \; \mathbb{E}[\boldsymbol{X}_{\tau}]/\mathbb{E}[X_{\max}]$ yields a lower bound better than of the order $1/n$, when $F_{\max}$ is itself allowed to depend on $n$. 
\noindent 
\paragraph{Mechanism design implications.} In all our results, the static monopoly threshold $p^*$ arises (roughly speaking) as the asymptotically optimal stopping rule. From a mechanism design point of view this also has desirable implications. Consider the \textit{online mechanism design} setting where we interpret the threshold as a price for the item, and the offer from a potential \textit{strategic} buyer as the price they are willing to pay for it. When using a fixed threshold as selling price, an example of a so-called \textit{posted price mechanism}, buyers have no incentive to misreport their (true) value for the item as this can never increase their utility (which is the maximum of their true value minus the paid price if they get the item, and zero otherwise). Such posted price mechanisms have close connections with prophet inequalities see, e.g., \cite{hajiaghayi2007automated,chawla2010multi,correa2019pricing}.

{
\paragraph{Our techniques.} To establish our lower bound results, it turns out that we can employ stopping rules with uniformly random thresholds, i.e., we randomly choose a threshold from a given interval and accept the first offer exceeding that threshold. Such policies are analytically tractable and allow us to characterise the worst-case (increasing) value sequences against such a stopping policy. To make the computation of the expected stopping value under these worst-cases sequences tractable, we use (in Section \ref{sec:tight_bounds}) a Taylor approximation of the maximum value distribution, which is possible because the random threshold is chosen from a small interval surrounding $p^*$.}

{
For the upper bounds, we construct worst-case instances by partitioning the support of $X_{\max}$ into $n$ intervals.
We then bound the expected payoff by factoring out the stopping rule, yielding a bound in terms of the partition only. Determining the optimal partition comes, roughly speaking, down to an application of the \textit{intermediate value theorem} in Section \ref{sec:first_bounds}. In Section \ref{sec:tight_bounds} we need a highly non-trivial recursive construction to determine the optimal partition. This construction (to be found in Appendix \ref{proofs:technical-lemmas}) might be of independent interest.
}

\subsection{Related work}
\label{sec:rel_work}
{The sup-inf stopping problem above can be interpreted as a \textit{robust} stopping problem, in the sense that we first choose a stopping rule, after which our adversary \textit{nature} chooses a worst-case correlation structure adhering to our choice of maximum value distribution. Boshuizen and Hill \cite{boshuizen1992moment} consider a setting in which the offer values are independently distributed. Each offer comes from an unknown distribution of which only the mean and variance are known (the set of distributions adhering to the given mean and variance is called the \textit{mean-variance ambiguity set}). The goal is to design a stopping rule that maximizes the seller's expected payoff assuming nature chooses a worst-case distribution adhering to the chosen means and variances. Kleer and Van Leeuwaarden \cite{kleer2022optimalstoppingtheorydistributionally} generalize this setting to arbitrary ambiguity sets for the independent offers. (These problems can also be interpreted as robust Markov decision processes in the sense of, e.g., \cite{iyengar2005robust}.) In both works \cite{boshuizen1992moment,kleer2022optimalstoppingtheorydistributionally}, the authors explicitly determine the optimal stopping rule, which has for every offer a threshold value for accepting it. These thresholds can be computed by means of a backward recursion. This results in a deterministic optimal stopping rule, as opposed to our worst-case correlated setting, where in most cases randomized stopping rules are optimal.}

The (in terms of value) equivalent inf-sup problem is closely related to the literature on prophet inequalities, where the complete correlation structure of the values is assumed to be known.
Hill and Kertz \cite{hill1983stop} show that without any further assumptions on the (known) correlation structure $\sup_{\tau\in\mathcal{T}} \; \mathbb{E}[\boldsymbol{X}_{\tau}]/\mathbb{E}[X_{\max}] = \Theta(1/n)$.\footnote{We write $f(x) = O(g(x))$ if $f(x) \leq c g(x)$ for some constant $c$ and all $x$, and $f(x) = \Omega(g(x))$ if $f(x) \geq c g(x)$ for some constant $c$ and all $x$. We write $f(x) = \Theta(g(x))$ if $c_1g(x) \leq f(x) \leq c_2g(x)$ for constants $c_1, c_2$ and $x \geq 0$.}
At the other extreme, if the offer values are completely independent, Krengel, Sucheston and Garling \cite{krengel1978semiamarts} show that there is a $1/2$-competitive policy, i.e., $\sup_{\tau\in\mathcal{T}} \; \mathbb{E}[\boldsymbol{X}_{\tau}]/\mathbb{E}[X_{\max}] \geq 1/2$.
Samuel-Cahn \cite{samuel1984comparison} showed that this guarantee can be achieved with a static threshold policy using as threshold the median of the distribution of the maximum of the independent values. Kleinberg and Weinberg \cite{kleinberg2012matroid} showed that this guarantee can also be achieved with as threshold half the expectation of the maximum value. Our optimal policy is similar to these optimal policies, in the sense that both are static threshold policies based on the maximum value distribution.
The advantage of the Kleinberg-Weinberg threshold for independent values is that it can be generalized to the setting where multiple offers can be accepted subject to a combinatorial matroid constraint; related to such multi-selection problems is also the setting of \textit{online combinatorial auctions}, see, e.g., the recent work by Cristi and Correa \cite{correa2023constant}, and references therein.

The competitive guarantee of $1/2$ can be improved under the additional assumption that the values are all drawn from the same distribution (the i.i.d. setting). For the case where the common distribution is known, Correa et al. \cite{correa2017posted} have solved this problem by establishing a tight $0.745$ competitive ratio.
This result is generalised in \cite{correa2022prophet}, by considering the setting of an unknown common distribution from which only a limited number of samples are available. For further literature with only sample access to the distribution, we refer to the recent work of Cristi and Ziliotto \cite{cristi2024prophet}, and references therein. If the base distribution does not depend on $n$, Kennedy and Kertz \cite{kennedy1991asymptotic}
show that $0.745$ can be beaten, and Correa et al. \cite{correa2021optimal} show that there are also better threshold policies.
To sensibly define this problem they make extreme value theory assumptions. 
For our problem such assumptions result in the monopoly price threshold being asymptotically optimal, since it implies smoothness and sublinear growth of $\E[X_{\max}]$ (that we require for our results in Section \ref{sec:first_bounds} and \ref{sec:tight_bounds}). 

There are also works that interpolate between these two extremes of no dependence restrictions and independence.
Immorlica et al. \cite{immorlica2023prophet}
consider a linear correlations model, where the $n$ offers are random variables $\mathbf{X} = (X_1,\dots,X_n)$ which are a linear combination of $m$ independent non-negative random variables $\boldsymbol{Y} = (Y_1,\dots,Y_m)$, via $\boldsymbol{X}=A\boldsymbol{Y}$ with $A$ a non-negative matrix.
Crucial to their work are
the row-sparsity $s_{\text{row}}$, the maximum number of features $Y_j$ that influence any offer $X_i$, and the column-sparsity $s_{\text{col}}$, the maximum amount offers $X_i$ influenced by any feature $Y_j$.
Their main result is that even with full knowledge of $A$ and the distribution of $\boldsymbol{Y}$, no online algorithm can recover more than a fraction $O(1/\min\{s_{\text{row}},s_{\text{col}})\})$
of the maximum. This result therefore interpolates between the two extremes of independent and fully correlated values.
Livanos et al. \cite{livanos2024improved} obtain similar interpolation results for models 
graphical dependencies 
Caragiannis et al. \cite{caragiannis2021relaxing} consider prophet inequalities with pairwise independent offer values, thereby also relaxing the independence assumption. The above exposition of prophet inequality literature is not meant to be exhaustive. We also refer to reader to, e.g., the survey of Lucier \cite{lucier2017economic} for further prophet-related settings. 

Finally, in the context of revenue maximization in auction design there is also a line of work \cite{gravin2018,bei2019correlation,babaioff2020escaping} building on the \textit{correlation-robust} framework of Carroll \cite{carroll2017robustness} in which the seller has marginal distributional information about the items, but not about their correlation structure. Such settings might also be interesting to study in the context of online stopping.

\section{Model}
\label{sec:model}
We consider the problem where $n$ unknown values $v_1,\dots,v_n$ are revealed sequentially. 
When a value is revealed an irrevocable decision has to be made whether to accept it, disregarding all future values, or continue to the next value.

An algorithm for deciding at which value to stop is called a stopping rule (or policy). A randomised stopping rule $\tau$ is a collection of functions $(\tau_i)_{i = 1,\dots,n}$, where $\tau_i$ is a Bernoulli random variable that takes as inputs $v_1,\dots,v_i$. Based on this input, the function $\tau_i$ selects value $v_i$ with probability $r_i(v_1,\dots,v_i)$. To be precise, we have
\begin{align*}
    \tau_i(v_1,\dots,v_i) =
    \begin{cases}
        1, & \text{ w.p. } r_i(v_1,\dots,v_i),\\
        0, & \text{ w.p. } 1 - r_i(v_1,\dots,v_i),
    \end{cases}
\end{align*}
for $i = 1,\dots,n$. This means that the probability offer $i$
is selected is given by
\begin{align*}
\Prob(\tau = i  \ | \   v_1,\dots,v_i) = r_i(v_1,\dots,v_i)\prod_{j=1}^{i-1}(1 - r_j(v_1,\dots,v_j)).
\end{align*}
A stopping rule is called deterministic if $r_i(v_1,\dots,v_i)\in\{0,1\}$ for all $(v_1,\dots,v_i)$ and $i$, so for deterministic stopping rules $\Prob(\tau = i  |  v_1,\dots,v_i)\in\{0,1\}$. Randomised stopping rules are therefore a mixture, that is a convex combination, of deterministic stopping rules.
We write
$$
\boldsymbol{v}_{\tau} :=  \sum_{i=1}^n v_i\cdot\Prob(\tau = i |  v_1,\dots,v_i)
$$
for the expected value of the offer that the stopping rule $\tau$ selects on the fixed tuple $\boldsymbol{v}$. \bigskip

\noindent \paragraph{Maximum value knowledge.} We consider an optimal stopping problem with distributional knowledge about the maximum value of the sequence $(v_1,\dots,v_n)$.
Specifically, we assume that the tuple of values $(v_1,\dots,v_n)$ is the realization of a nonnegative random vector $\mathbf{X} = (X_1,\dots,X_n)$ that follows a joint distribution $F_\mathbf{X}$ which we assume is given as a (joint) cumulative distribution function (CDF). We do not assume to know this joint distribution, but, instead, only the distribution $F_{\max}$ of $\max_{1 \leq i \leq n} X_i$. For a given $F_{\max}$, we denote with $\mathcal{I}(F_{\max},n)$ the set of all joint distributions over $n$ values whose maximum value follows $F_{\max}$, i.e.,
\begin{align}
    \mathcal{I}(F_{\max},n)=\{F_{\boldsymbol{X}}:\mathbb{R}^n_+\to[0,1]\;|\;F_{\boldsymbol{X}}\text{ a CDF and }
\max_{1\le i\le n}\{X_i\}\sim F_{\max}\}.\label{eq:instance-ambiguity-set}
\end{align} 
With a slight abuse of notation, we sometimes write $\boldsymbol{Y}\in\mathcal{I}(F,n)$ to indicate that the joint distribution $F_{\mathbf{Y}}$ of $\mathbf{Y}$ is in $\mathcal{I}(F,n)$.

Fundamental in the optimal stopping problem with maximum value knowledge is the revenue function 
$$
    \Pi(p)=p\cdot\mathbb{P}(X_{\max}\ge p),
$$
where $X_{\max}$ is a random variable with distribution $F_{\max}$. This notation will be used throughout. With the revenue function we can define the monopoly price of $F_{\max}$ as
\begin{align}
    p^*=\underset{p\ge 0}{\mathrm{argmax}}\{\Pi(p)\}\text{ with }  \Pi^* = \Pi(p^*).
    \label{eq:monopoly_price}
\end{align}
The monopoly price will be ubiquitous in our analysis.

For a given distribution $F_{\max}$ and set of admissible stopping rules $\mathcal{T}$, which without further specification is the set of randomised stopping rules, we want to find the stopping rule $\tau \in \mathcal{T}$ that solves
\begin{align}
    V^*(F_{\max}, n):=\sup_{\tau\in\mathcal{T}}\inf_{F_{\boldsymbol{X}}\in\mathcal{I}(F_{\max}, n)}\int_{R^n_+}\boldsymbol{v}_{\tau}\mathrm{d}F_{\boldsymbol{X}}(\boldsymbol{v})=\sup_{\tau\in\mathcal{T}}\inf_{\boldsymbol{X}\in\mathcal{I}(F_{\max}, n)}\mathbb{E}[\boldsymbol{X}_{\tau}].\label{eq:definition-v-star}
\end{align}
That is, we first choose a stopping rule $\tau$, after which our adversary \textit{nature} chooses a worst-case joint distribution from $\mathcal{I}(F_{\max},n)$ whose maximum value follows $F_{\max}$. We then consider the expected performance of the stopping rule $\tau$ under a tuple of values drawn from the worst-case distribution/instance, and the possible randomness of $\tau$. 
\subsection{Minimax equality}
The maximin problem described in \eqref{eq:definition-v-star} yields the same value as the minimax problem 
\begin{align}
    \inf_{\boldsymbol{X}\in\mathcal{I}(F, n)}\sup_{\tau\in\mathcal{T}}\mathbb{E}[\boldsymbol{X}_{\tau}] \label{eq:minimax-v-star}
\end{align}
This follows from the Von Neumann-Fan minimax theorem, as we illustrate in Theorem \ref{thm:minimax-equality}. The proof is deferred to Appendix \ref{proof:minimax-equality}.

\begin{theorem}
With $\mathcal{T}$ the set of all randomised stopping rules, and $\mathcal{I}(F,n)$ the set of all joint distributions over $n$ values whose maximum follows $F$, we have
\label{thm:minimax-equality}
\begin{align}   \sup_{\tau\in\mathcal{T}}\inf_{\boldsymbol{X}\in\mathcal{I}(F, n)}\mathbb{E}[\boldsymbol{X}_{\tau}]=\inf_{\boldsymbol{X}\in\mathcal{I}(F, n)}\sup_{\tau\in\mathcal{T}}\mathbb{E}[\boldsymbol{X}_{\tau}].
\end{align}
\end{theorem}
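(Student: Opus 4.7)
The plan is to invoke the Von Neumann--Fan minimax theorem on the bilinear pairing $f(\tau, \mathbf{X}) = \mathbb{E}[\mathbf{X}_\tau]$ defined on $\mathcal{T} \times \mathcal{I}(F,n)$. The inequality $\sup_\tau \inf_\mathbf{X} f \leq \inf_\mathbf{X} \sup_\tau f$ is automatic, so the content lies in the reverse direction, which I would extract by verifying the convex-concavity of $f$ together with a compactness-plus-semicontinuity condition on one of the two sides.

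Convexity of both sets and (bi)linearity of $f$ are routine. A convex combination of two randomised stopping rules is obtained by first flipping a biased coin and then running the chosen rule, so $\mathcal{T}$ is convex. For $\mathcal{I}(F,n)$, the mixture $\lambda F_{\mathbf{X}^{(1)}} + (1-\lambda) F_{\mathbf{X}^{(2)}}$ of any two elements has max-marginal $\lambda F + (1-\lambda) F = F$. Linearity of $f$ in each argument is evident from the integral representation $f(\tau, \mathbf{X}) = \int \mathbf{v}_\tau\, dF_{\mathbf{X}}(\mathbf{v})$ together with the definition of $\mathbf{v}_\tau$.

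For the topological hypothesis, I would equip $\mathcal{I}(F,n)$ with the weak topology of probability measures on $\mathbb{R}^n_+$. Tightness is immediate from the component-wise domination $X_i \leq X_{\max} \sim F$, and closedness follows from the continuous mapping theorem applied to the continuous function $\mathbf{v} \mapsto \max_i v_i$: any weak limit of elements of $\mathcal{I}(F,n)$ still has max-marginal $F$. Prokhorov's theorem therefore yields weak compactness of $\mathcal{I}(F,n)$.

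The main obstacle will be verifying lower semicontinuity of $\mathbf{X} \mapsto f(\tau, \mathbf{X})$ in the weak topology for each fixed $\tau \in \mathcal{T}$, since the integrand $\mathbf{v}_\tau = \sum_i v_i\, r_i(v_1,\dots,v_i)\prod_{j<i}(1 - r_j(v_1,\dots,v_j))$ is in general only measurable. I would circumvent this by restricting to the dense subclass $\mathcal{T}' \subseteq \mathcal{T}$ whose selection functions $r_i$ are continuous and bounded; on $\mathcal{T}'$ the integrand is bounded continuous and $f(\tau, \cdot)$ is weakly continuous, so Ky Fan's minimax theorem applies on $\mathcal{T}' \times \mathcal{I}(F,n)$. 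To conclude, I would transfer the equality back to $\mathcal{T} \times \mathcal{I}(F,n)$ via an $L^1$-approximation argument (e.g., Lusin-type) that shows $\sup_{\tau \in \mathcal{T}} f(\tau, \mathbf{X}) = \sup_{\tau \in \mathcal{T}'} f(\tau, \mathbf{X})$ pointwise in $\mathbf{X}$, and likewise on the sup-inf side.
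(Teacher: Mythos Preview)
Your plan coincides with the paper's: both argue convexity of $\mathcal{T}$ and $\mathcal{I}(F,n)$, bilinearity of the pairing, weak compactness of $\mathcal{I}(F,n)$ via tightness and Prokhorov, and then invoke the Von Neumann--Fan minimax theorem. The paper in fact stops at ``bilinear'' and asserts (in a footnote) that this already implies the required semicontinuity, so you are being more scrupulous than the paper in flagging the lower-semicontinuity issue and proposing the Lusin-type reduction to continuous selection functions.

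One genuine slip in your write-up: on $\mathcal{T}'$ the integrand $\mathbf{v}_\tau = \sum_i v_i\, r_i(\cdot)\prod_{j<i}(1-r_j(\cdot))$ is continuous but \emph{not} bounded, since $0 \le \mathbf{v}_\tau \le \max_i v_i$ and the latter is unbounded. Weak convergence of measures therefore does not directly give $f(\tau,\cdot)$ continuous. The repair is that all laws in $\mathcal{I}(F,n)$ share the same max-marginal $F$, so when $\mathbb{E}_F[X_{\max}] < \infty$ the family $\{\max_i X_i : \mathbf{X}\in\mathcal{I}(F,n)\}$ is uniformly integrable; a standard truncation then upgrades weak convergence to convergence of $\int \mathbf{v}_\tau\, dF_{\mathbf{X}}$ for continuous $\mathbf{v}_\tau$. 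You should also dispose of the case $\mathbb{E}_F[X_{\max}] = \infty$ separately (both sides are then $+\infty$), as the paper does. Finally, for the transfer step you only need the trivial inequality $\sup_{\mathcal{T}}\inf_{\mathcal{I}} \ge \sup_{\mathcal{T}'}\inf_{\mathcal{I}}$ together with the pointwise identity $\sup_{\mathcal{T}} f(\cdot,\mathbf{X}) = \sup_{\mathcal{T}'} f(\cdot,\mathbf{X})$; the phrase ``likewise on the sup-inf side'' is not needed and would in fact require a uniform (not pointwise) approximation.
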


The intuition underlying Theorem \ref{thm:minimax-equality} is that we do not impose any further restrictions on joint distributions $X \in \mathcal{I}(F,n)$, i.e., arbitrary correlations between values are allowed. This results in the set $\mathcal{I}(F,n)$ being convex. 
This contrasts with settings that restrict correlation. 
Let $\tilde{\mathcal{I}}(F,n)\subset\mathcal{I}(F,n)$ satisfy the additional condition that the joint distributions are a product distribution, meaning that the $X_i$ are assumed to be independent, then $\tilde{\mathcal{I}}(F,n)$ is (in general) not convex.
\begin{example}
    Let $F\sim\mathrm{Bernoulli}(\frac{1}{2})$ and $X_i\sim\mathrm{Bernoulli}(\frac{1}{2})$, then two independent instances in $\tilde{\mathcal{I}}(F,2)$ are $(0,X_1)$ and $(X_2,0)$.
    Let the instance $(Y_1,Y_2)$ be a mixture which is the first instance with probability $\frac{1}{2}$ and the second instance with probability $\frac{1}{2}$, then
    $$
    (Y_1,Y_2)=
    \begin{cases}
        (0,0),&\text{w.p. }\frac{1}{2},\\
        (0,1),&\text{w.p. }\frac{1}{4},\\
        (1,0),&\text{w.p. }\frac{1}{4}.
    \end{cases}
    $$
    We have that $\max\{Y_1,Y_2\}\sim F$, but $\mathbb{P}(Y_1=0)=\frac{3}{4}$ and $\mathbb{P}(Y_1=0|Y_2=0)=\frac{2}{3}$, so $Y_1$ and $Y_2$ are not independent. Therefore $(Y_1,Y_2)\notin\tilde{\mathcal{I}}(F,2)$. 
\end{example}

\subsection{Deterministic stopping rules}
Here we consider the setting where the set of admissible stopping rules is the set of deterministic stopping rules.
A natural subset of deterministic stopping rules are (deterministic) threshold policies, which accept the first value exceeding a fixed threshold.
\begin{definition}[Deterministic threshold policy]
    \label{def:threshold-policy}
    A threshold policy $\tau$ with deterministic threshold $t$ has the following stopping rule:
    \begin{align*}
        \tau_i(v_1,\dots,v_i)&=\indicator{v_i\ge t}\text{ for }i=1,\dots,n.
    \end{align*}
\end{definition}
We establish that a threshold policy is the best deterministic policy in Theorem \ref{thm:monopoly_worstcase}.

\begin{theorem}
For $n\ge 2$ the optimal deterministic stopping rule has a payoff of $\Pi^*(F_{\max})$ and is a threshold policy with the monopoly price as threshold.
\label{thm:monopoly_worstcase}
\end{theorem}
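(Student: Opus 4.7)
My plan is to prove the two directions separately: first that the threshold policy $\tau_{p^*}$ achieves worst-case payoff at least $\Pi^*$, and then that no deterministic rule can do better. The first direction is elementary; the second uses a tailored adversarial instance that depends only on the policy's first-stage behaviour.

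For the lower bound, I would directly analyse $\tau_{p^*}$ on an arbitrary instance $\mathbf{X} \in \mathcal{I}(F_{\max},n)$. Because $\tau_{p^*}$ accepts the first offer whose value is at least $p^*$, it selects some value iff $X_{\max} \geq p^*$, and in that case the selected value is at least $p^*$. Hence $\mathbb{E}[\mathbf{X}_{\tau_{p^*}}] \geq p^* \cdot \mathbb{P}(X_{\max} \geq p^*) = \Pi^*$, yielding $\inf_{\mathbf{X}} \mathbb{E}[\mathbf{X}_{\tau_{p^*}}] \geq \Pi^*$.

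For the upper bound, I would fix an arbitrary deterministic policy $\tau$ with time-$1$ acceptance set $A_1 \subseteq \mathbb{R}_+$ and let $t := \inf A_1$ (with $\inf \emptyset = \infty$). I construct the adversarial instance $\mathbf{X}^\tau$ by drawing $Y \sim F_{\max}$ and setting
\[
\mathbf{X}^\tau \;=\; \begin{cases} (t,\,0,\,\dots,\,0,\,Y) & \text{if } Y \geq t,\\ (Y,\,0,\,\dots,\,0) & \text{if } Y < t,\end{cases}
\]
using the $n \geq 2$ assumption crucially to have a separate slot for $t$ at position $1$ and for $Y$ at position $n$. In both cases $\max_i X_i^\tau = Y$, so $\mathbf{X}^\tau \in \mathcal{I}(F_{\max},n)$. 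When $Y \geq t$, $\tau$ stops at $v_1 = t$ for payoff $t$ (handling $t \notin A_1$ by the standard $\epsilon$-perturbation $t \mapsto t+\epsilon \in A_1$ and sending $\epsilon \downarrow 0$). When $Y < t$, $\tau$ must reject $v_1 = Y$ since $Y < \inf A_1$ implies $Y \notin A_1$, and then every later offer equals $0$, forcing payoff $0$ irrespective of $\tau$'s remaining acceptance rules. The expected payoff therefore equals $t \cdot \mathbb{P}(Y \geq t) = \Pi(t) \leq \Pi^*$ by definition of $p^*$ as the maximiser of $\Pi$; if $t = \infty$ the payoff is simply $0$. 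This shows $\inf_{\mathbf{X}} \mathbb{E}[\mathbf{X}_\tau] \leq \Pi^*$ for every $\tau \in \mathcal{D}$, matching the lower bound and identifying $\tau_{p^*}$ as optimal.

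The main obstacle I anticipate is the conceptual step of isolating $A_1$ as the only relevant piece of information: deterministic policies can have arbitrarily intricate, history-dependent acceptance sets $A_i(v_1,\dots,v_{i-1})$ at later stages, yet the adversary needs none of this. The reason is that on the ``$Y<t$'' branch nature fills all positions after the first with zero, so the policy's downstream behaviour becomes irrelevant (it can only ever stop at zero or not stop at all). A minor technicality is the open/closed boundary of $A_1$, which is handled via the standard $\epsilon$-limit together with the bound $(t+\epsilon)\mathbb{P}(Y \geq t) \to \Pi(t)$ as $\epsilon \downarrow 0$.
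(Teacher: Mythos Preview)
Your proof is correct and follows essentially the same approach as the paper: both isolate the infimum $t$ (the paper calls it $w$) of the first-stage acceptance set as the only relevant feature of a deterministic rule, build the same two-branch adversarial instance from it, and handle the non-attained-infimum case by an $\epsilon$-perturbation. The only cosmetic difference is that you place $Y$ at position $n$ while the paper places $v_{\max}$ at position $2$.
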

\begin{proof}
    Let $\mathcal{I}(p)$ denote the following family of instances
$$
(v_1,\dots,v_n) = 
\begin{cases}
    (p,v_{\max},0,\dots,0), & \text{ if } v_{\max} \geq p,\\
(v_{\max},0,0,\dots,0),& \text{ if } v_{\max} < p.
\end{cases}
$$
Moreover, let $\Pi^*=\Pi^*(F_{\max})$.
The payoff of the static monopoly price threshold is at least $p^*$ if $X_{\max}\ge p^*$, so the expected payoff is at least $p^*\cdot\mathbb{P}(X_{\max}\ge p^*)=\Pi^*$. Nature can make sure the expected payoff is never more than $\Pi^*$ by choosing the instance $\mathcal{I}(p^*)$.
We will now show that no other deterministic stopping rule $\tau = (\tau_i)$ can do better. Let $w=\inf\{v_1:r_1(v_1)=1\}$, where $r_1$ is given by $\tau_1$. 

If $w=\infty$, meaning that $v_1$ is never selected by $\tau$, then the instance $(v_{\max},0,\dots,0)$ guarantees a payoff of zero. If $w$ is finite and the infimum is attained, then the instance $\mathcal{I}(w)$ yields a payoff of $w\cdot\mathbb{P}(X_{\max}\ge w)\le \Pi^*$. 

Lastly, if $w$ is finite and the infimum is not attained we can use essentially the same argument as when the infimum is attained by taking a value slightly higher than $w$. 
Formally, choose $\delta>0$ arbitrarily and choose $x=x(\delta)$ with $r_1(x)=1$, such that $\mathbb{P}(\{v_1:v_1\le x\text{ and }r_1(x)=1\})\le\delta$. Now for the instances $\mathcal{I}(x)$ the payoff is at most $x\cdot\mathbb{P}(X_{\max}\ge x)+x\cdot\delta\le\Pi^*+x\cdot \delta$. Letting $\delta\to 0$ yields the desired result, as $x(\delta)$ can be chosen as a decreasing function of $\delta$. 
\end{proof}

\section{Bounds for randomised stopping rules}
\label{sec:first_bounds}
Deterministic stopping rules are not always optimal.
For example, consider the case where $n=2$ and $X_{\max}$ has an exponential distribution with mean $1$, i.e., its CDF is $F(x) = 1 - e^{-x}$ for $x \geq 0$. Then the worst-case expected value obtained by setting the monopoly price as a (deterministic) threshold is $\Pi^*(F_{\max})=\frac{1}{e}$, but picking 
one of the two values uniformly at random has an expected payoff of $\frac{1}{2}$, so here randomization clearly helps.

In this section, we provide a natural condition under which a randomised stopping rule strictly outperforms any deterministic stopping rule. Additionally, we establish a universal upper bound on $V^*(F_{\max},n)$ that shows this improvement is typically only marginal. These findings are Theorems \ref{thm:non-optimality-determinstic} and \ref{thm:randomised-bound-continuous-xmax}, respectively. For ease of readability, we repeat the definitions of $\Pi^*$ and $V^*$ in the theorem statements.

\begin{theorem}
    Let $n \in \N$ and let $F_{\max}$ be a given probability distribution (possibly depending on $n$). Define
    $$
    \Pi^*(F_{\max}) = \max_{ p \geq 0} p \cdot\mathbb{P}(X_{\max}\ge p) = p^* \cdot\mathbb{P}(X_{\max}\ge p^*)
    $$
    as the worst-case expected value of threshold algorithm using monopoly price $p^*$, and 
    $$
    V^*(F_{\max}, n) = \sup_{\tau\in\mathcal{T}}\inf_{\boldsymbol{X}\in\mathcal{I}(F_{\max}, n)}\mathbb{E}[\boldsymbol{X}_{\tau}]
    $$
    the worst-case expected payoff of the optimal randomised rule for sequences of $n$ (correlated) values whose maximum follows $F_{\max}$.
    \begin{itemize}
        \item [(a)] \refstepcounter{theorempart}\label{thm:non-optimality-determinstic}
        If $F_{\max}$ is differentiable at $p^*$ then
        $V^*(F_{\max}, n) > \Pi^*(F_{\max})$.
        \item [(b)] \refstepcounter{theorempart}\label{thm:randomised-bound-continuous-xmax} For any $F_{\max}$, the following upper bound holds: 
        $$
        V^*(F_{\max}, n) \le \Pi^*(F_{\max}) + \frac{\mathbb{E}[X_{\max}]}{n}.
        $$
    \end{itemize}    
    \label{thm:first_bounds}
\end{theorem}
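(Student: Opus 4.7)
The plan is to prove the two parts separately using the techniques outlined in the ``Our techniques'' paragraph.

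For part (a), I would exhibit a randomised threshold rule $\tau_\delta$ that strictly beats $\Pi^*$. Concretely, $\tau_\delta$ draws a threshold $T$ uniformly from $[p^* - \delta, p^* + \delta]$ and accepts the first offer exceeding $T$. The first step is to characterise the worst-case correlation against $\tau_\delta$: because the policy's stopping value is monotone in $T$ along a fixed sequence, the adversary's optimal response is an increasing arrangement of the values so that any threshold realisation triggers an early (and hence small) acceptance. The resulting worst-case expected payoff is an explicit function of $\delta$ and of $F_{\max}$ near $p^*$, which I would Taylor-expand using the differentiability assumption. The first-order optimality condition $\mathbb{P}(X_{\max} \ge p^*) = p^* f(p^*)$ (where $f$ is the density at $p^*$) annihilates the $O(\delta)$ term, leaving a strictly positive $O(\delta^2)$ contribution coming from the variance of $T$. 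For sufficiently small $\delta > 0$ this yields $V^*(F_{\max},n) \geq \mathbb{E}_T[\text{payoff of } \tau_\delta] > \Pi^*$.

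For part (b), by the minimax equality (Theorem~\ref{thm:minimax-equality}) it suffices to exhibit a single instance $\boldsymbol{X}^* \in \mathcal{I}(F_{\max}, n)$ on which every stopping rule has expected payoff at most $\Pi^* + \mathbb{E}[X_{\max}]/n$. I would construct $\boldsymbol{X}^*$ from a partition $0 = c_0 < c_1 < \cdots < c_n$ of the support of $F_{\max}$ into $n$ intervals $I_j = [c_{j-1}, c_j)$, where, conditional on $X_{\max} \in I_j$, the observed sequence interleaves ``decoy'' offers built from the $c_i$'s with the genuine maximum, in a way that neither the timing nor the identity of the maximum is directly revealed. For an arbitrary stopping rule $\tau$, I would decompose $\mathbb{E}[\boldsymbol{X}_\tau]$ by the stopping position and bound the contribution from stopping at any decoy value $c_i$ using the defining inequality of the monopoly price, $c_i \cdot \mathbb{P}(X_{\max} \geq c_i) \leq \Pi^*$. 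Summing these bounded contributions against the stopping probabilities, which sum to at most $1$, gives a total decoy contribution of at most $\Pi^*$, so that the stopping rule ``factors out'' and only a residual term depending on the partition remains. Applying the intermediate value theorem, I would then select $\{c_i\}_{i=1}^{n-1}$ so that this residual equals exactly $\mathbb{E}[X_{\max}]/n$, completing the bound.

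The principal obstacle is the instance design in part (b): a naive construction such as $(c_1, \ldots, c_{j-1}, X_{\max}, 0, \ldots, 0)$ on the event $\{X_{\max} \in I_j\}$ lets the algorithm detect the maximum as soon as an observed value deviates from the deterministic decoy pattern, letting it secure $\mathbb{E}[X_{\max}]$ and destroying the bound entirely. Obscuring the identity of the maximum while simultaneously preserving the marginal $F_{\max}$ of the sample-path maximum is the heart of the argument, and the IVT-chosen partition is what balances the two sides of the resulting tradeoff to yield the additive error $\mathbb{E}[X_{\max}]/n$. In part (a), the secondary difficulty is rigorously identifying the increasing sequence as the worst-case correlation structure, which I expect to follow from a standard monotonicity/exchange argument exploiting that $\tau_\delta$ depends on the sequence only through the order in which its values cross $T$.
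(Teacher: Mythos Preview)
Your plan tracks the paper's architecture, but each half has a genuine gap.

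\textbf{Part (a).} Your claimed mechanism for the strict improvement is incorrect. If you compute $\mathbb{E}_T[\Pi(T)]$ for $T$ uniform on $[p^*-\delta,p^*+\delta]$, the first-order condition indeed kills the $O(\delta)$ term, but the $O(\delta^2)$ contribution ``from the variance of $T$'' is $\tfrac{1}{2}\Pi''(p^*)\,\mathrm{Var}(T)$, which is \emph{non-positive} since $p^*$ is a maximiser. With only first-order differentiability you cannot even control $\Pi''$, so the argument as stated cannot close. The actual source of the improvement is different and is first-order in $\delta$: because nature has only $n$ values to place, its best staircase against the uniform threshold has step width of order $\delta/n$, so conditional on $X_{\max}>p^*+\delta$ the accepted value is at least $p^*+\delta/n$ rather than merely the realised threshold. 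Expanding the payoff gives $\Pi^*+\frac{1-F_{\max}(p^*)}{n}\,\delta+O(\delta^2)$; the $n$-independent $O(\delta)$ pieces cancel via $\Pi'(p^*)=0$, but the $\delta/n$ overshoot term survives and is strictly positive. The paper executes this with a discrete $n$-point threshold $t_i=p^*-(n-i)\epsilon$, each with probability $1/n$, and shows directly that the derivative of the payoff lower bound at $\epsilon=0$ equals $\tfrac{1}{n}\mathbb{P}(X_{\max}\ge p^*)>0$.

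\textbf{Part (b).} You correctly diagnose that the naive instance $(c_1,\dots,c_{j-1},X_{\max},0,\dots,0)$ fails because the algorithm can recognise when it is looking at the true maximum. But your proposal stops at naming the obstacle; ``obscuring the identity of the maximum'' is not what the paper does, and it is not clear any information-hiding construction can work while preserving the marginal of the maximum. The paper's resolution is to let the decoys depend on the stopping rule itself: after fixing an arbitrary $\tau$, define recursively
\[
b_i=\arg\max_{x\in[w_{i-1},w_i]} r_i(b_1,\dots,b_{i-1},x),
\]
and use the instance $(b_1,\dots,b_{j-1},t,0,\dots,0)$ when $X_{\max}=t\in[w_{j-1},w_j]$. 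By construction $\mathbb{P}(\tau=i\mid b_1,\dots,b_{i-1},t)\le\mathbb{P}(\tau=i\mid b_1,\dots,b_i)$ for every $t\in[w_{i-1},w_i]$, and it is precisely this inequality that lets you pull the stopping probabilities outside the integral and bound the decoy contribution by $\Pi^*$. Nothing is hidden from $\tau$; rather, nature picks the decoy in each interval that $\tau$ is already most eager to stop on. Without this $\tau$-adaptive choice of the $b_i$'s your factoring step does not go through.
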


Theorem \ref{thm:non-optimality-determinstic} is not true without any condition on $F_{\max}$. This is illustrated in the following lemma, where we show that that for discrete distributions with at most $n$ support points, the monopoly price threshold algorithm remains optimal, even among all randomised stopping rules.

\begin{lemma}
    If $X_{\max}$ has at most $n$ support points, then $V^*(F_{\max},n)=\Pi^*(F_{\max})$ with $V^*(F_{\max},n)$ and $\Pi^*(F_{\max})$ as in Theorem \ref{thm:first_bounds}.
\end{lemma}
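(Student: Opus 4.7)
The plan is to prove two inequalities. The inequality $V^*(F_{\max},n) \geq \Pi^*(F_{\max})$ is immediate from Theorem \ref{thm:monopoly_worstcase}, since the monopoly-price threshold is a deterministic (hence randomised) stopping rule achieving worst-case value $\Pi^*(F_{\max})$. All the work lies in the reverse inequality, and here my strategy is to invoke weak duality (or Theorem \ref{thm:minimax-equality}) to reduce matters to exhibiting a single joint distribution $\boldsymbol{X} \in \mathcal{I}(F_{\max},n)$ on which no randomised stopping rule can exceed $\Pi^*(F_{\max})$.

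For the hard instance, I would enumerate the support of $X_{\max}$ as $s_1 < s_2 < \cdots < s_k$ with $k \leq n$ and atom probabilities $p_j = \Prob(X_{\max} = s_j)$, and then build a ``staircase'' joint distribution: conditional on $X_{\max} = s_j$, declare the value sequence to be $(s_1, s_2, \ldots, s_{j-1}, s_j, 0, 0, \ldots, 0)$, with $s_j$ placed in position $j$. Since the leading block is strictly increasing, the maximum of this sequence equals $s_j$ almost surely, so this distribution does belong to $\mathcal{I}(F_{\max},n)$.

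To bound the value of any randomised rule on this instance, I would first observe that stopping on a zero entry is weakly dominated (payoff zero), so without loss of generality the rule is described by conditional stopping probabilities $r_i$ on the informative histories of the form $(s_1, \ldots, s_i)$. Writing $\beta_i = r_i \prod_{j<i}(1-r_j)$, which satisfy $\sum_i \beta_i \leq 1$, a direct calculation and an interchange of summation give $\E[\boldsymbol{X}_\tau] = \sum_{i=1}^k \beta_i \cdot s_i \cdot \Prob(X_{\max} \geq s_i) = \sum_{i=1}^k \beta_i \Pi(s_i)$. It then suffices to note that $\Pi(s_i) \leq \Pi^*(F_{\max})$ for every $i$; this in fact holds because the step function $p \mapsto \Prob(X_{\max} \geq p)$ is constant on each interval $(s_{j-1}, s_j]$, so $\Pi(p) = p \cdot \Prob(X_{\max} \geq p)$ is maximised on that interval at $p = s_j$, making the sup over all $p \geq 0$ equal to $\max_j \Pi(s_j)$. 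Combined with $\sum_i \beta_i \leq 1$ this yields the claim.

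I do not expect a serious obstacle. The two things to handle with some care are the reduction of a general randomised rule (whose stopping decision at position $i$ can in principle depend on the full history $(v_1,\ldots,v_i)$ and on internal randomness) to the single sequence of numbers $r_i$, and the observation that $\Pi$ attains its maximum at one of the support points, which replaces the continuous maximisation in the definition of $\Pi^*$ by a discrete one. Both are routine once the staircase construction is in view, which is really the one creative step in the argument.
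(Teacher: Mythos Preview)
Your proposal is correct and follows essentially the same approach as the paper: both construct the identical ``staircase'' instance $(s_1,\dots,s_j,0,\dots,0)$ with probability $\Prob(X_{\max}=s_j)$, interchange the order of summation to obtain $\sum_i \beta_i\, s_i\,\Prob(X_{\max}\ge s_i)$, and bound each term by $\Pi^*(F_{\max})$. Your extra remark that $\Pi$ attains its supremum at a support point is true but not needed, since $s_i\,\Prob(X_{\max}\ge s_i)\le\Pi^*$ already follows directly from the definition of $\Pi^*$ as a supremum.
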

\begin{proof}
    Denote the support points of $X_{\max}$ by $x_1\le \dots\le x_n$ and suppose nature chooses for $i = 1,\dots,n$ the tuple $(x_1,\dots,x_i,0,\dots,0)$ with probability 
    $\mathbb{P}(X_{\max} = x_i)$. Then clearly the maximum of the tuple values follows the distribution 
$F_{\max}$, i.e., the distribution over the sequences is contained in $\mathcal{I}(F_{\max},n)$.
    The expected payoff can then be bounded by
    \begin{align*}
       V^*(F_{\max},n) &=\sum_{j=1}^n\mathbb{P}(X_{\max}=x_j)\sum_{i=1}^jx_i\mathbb{P}(\tau=i|x_1,\dots,x_i)\\
       &=\sum_{i=1}^n\mathbb{P}(\tau=i|x_1,\dots,x_i)\sum_{j=i}^nx_i\mathbb{P}(X_{\max}=x_j)\\
        &=\sum_{i=1}^n\mathbb{P}(\tau=i|x_1,\dots,x_i)\cdot x_i\mathbb{P}(X_{\max}\ge x_i)\\
        &\le\sum_{i=1}^n\mathbb{P}(\tau=i|x_1,\dots,x_i)\cdot\Pi^*(F_{\max})\le \Pi^*(F_{\max}).
    \end{align*}
    In the last step we use the fact that the probabilities of stopping add up to at most $1$.
    Lastly, by Theorem \ref{thm:monopoly_worstcase} $V^*(F_{\max},n)\ge\Pi^*(F_{\max})$, as deterministic stopping rules are a subset of randomised stopping rules, so $V^*(F_{\max},n)=\Pi^*(F_{\max})$
\end{proof}

\subsection{Non-optimality deterministic stopping rules}
In this section we prove Theorem \ref{thm:non-optimality-determinstic}, i.e., we show that, under a natural condition, randomised stopping rules can outperform deterministic ones. We do this by using a \textit{randomised threshold policy} as defined in Definition \ref{def:RTP}. Such a policy chooses a threshold at random according to a given distribution, and then selects the first value that exceeds the threshold.

Structural properties of worst-case instances under random threshold policies are established in Lemmas \ref{lemma:increasing-threshold} and \ref{lemma:support-restriction}.
Using these lemmas, the payoff for a class of simple random threshold policies is then computed in Lemma \ref{lemma:expected-payoff-n-point-policy}, from which Theorem \ref{thm:non-optimality-determinstic} follows directly.

\begin{definition}[Random threshold policy]
    \label{def:RTP}
    A random threshold policy $\tau$ with the random variable $T$, with distribution $G$, as threshold has the following stopping rule:
    \begin{align*}
        \tau_i(v_1,\dots,v_i;T)&=\indicator{v_i\ge T}\text{ for }i=1,\dots,n.
    \end{align*}
\end{definition}

When a random threshold policy is used not all instances need to be considered.
Specifically, we only need to consider instances with increasing values restricted to the support of the random threshold.
In the following two lemmas this is formalised and proven.  
\begin{lemma}
    \label{lemma:increasing-threshold}
    If the stopping rule is a random threshold policy, then for a given realization $v_{\max}$ of $X_{\max}$ and instance $(w_1,\dots,w_n)$, there exists an instance $(v_1,\dots,v_n)$ with $v_1< \dots< v_n=v_{\max}$, such that the payoff under $(v_1,\dots,v_n)$ is not higher than the payoff under $(w_1,\dots,w_n)$.
\end{lemma}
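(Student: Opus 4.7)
The plan is to construct $(v_1, \dots, v_n)$ through two pointwise-in-$T$ reductions: first rearrange the multiset $\{w_1, \dots, w_n\}$ into increasing order, and then break any remaining ties by inserting additional distinct values strictly below $v_{\max}$, producing a strictly increasing sequence ending in $v_{\max}$. Each reduction will weakly decrease the payoff for every realization of the random threshold $T$, and integrating against the distribution $G$ of $T$ then yields the required inequality on expected payoffs.

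\textbf{Sorting reduction.} For a fixed realization of $T$, the random threshold policy stops on a sequence $(y_1, \dots, y_n)$ at the first index $i$ with $y_i \geq T$ (and pays zero if no such index exists). The nondecreasing rearrangement of the multiset $\{w_1, \dots, w_n\}$ therefore yields as its stopped value the smallest element of the multiset that is $\geq T$. Since the value $w_{i^*}$ obtained on the original ordering (the first $w_i$ with $w_i \geq T$) is itself an element of the multiset that is $\geq T$, it dominates this minimum. Hence the sorted arrangement has pointwise weakly smaller payoff than $(w_1, \dots, w_n)$.

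\textbf{Tie-breaking and conclusion.} Let $u_1 < u_2 < \dots < u_k$ be the distinct values of the multiset, so $u_k = v_{\max}$. If $k < n$, I select any $n - k$ additional distinct points from $(0, v_{\max}) \setminus \{u_1, \dots, u_k\}$ and sort the union to obtain the strictly increasing sequence $v_1 < v_2 < \dots < v_n = v_{\max}$. Because the enlarged set $\{v_1, \dots, v_n\}$ contains $\{u_1, \dots, u_k\}$, its smallest element $\geq T$ is no larger than $\min\{u_j : u_j \geq T\}$; hence the payoff on $(v_1, \dots, v_n)$ is pointwise no larger than on the sorted rearrangement, and so no larger than on $(w_1, \dots, w_n)$. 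The main conceptual point is simply that, for a random threshold policy, the payoff depends on the sequence only through its multiset of values and their order, and both the increasing arrangement and the enlargement of the multiset weakly decrease the smallest element exceeding $T$. I do not expect any serious obstacle beyond these two pointwise monotonicity claims; the only mild subtlety is the degenerate case $v_{\max} = 0$, where every payoff is zero and the statement holds vacuously.
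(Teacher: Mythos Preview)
Your proposal is correct and takes essentially the same approach as the paper: both argue pointwise in the threshold realization $T$ that passing to a strictly increasing sequence whose value set contains the ``relevant'' values of $(w_1,\dots,w_n)$ can only decrease the first value exceeding $T$. The only cosmetic difference is that the paper extracts the subsequence of running maxima and then pads, whereas you sort, deduplicate, and pad; the underlying pointwise monotonicity argument is identical.
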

\begin{proof}
    Let 
    $$
    J=\left\{j\in\{1,\dots,n\}:w_j\le \max_{i=1,\dots,j-1} w_i \right\}.
    $$
    Because a threshold policy is used no $w_j$ with $j\in J$ will be accepted, since it would have already been accepted. Let $v_1,\dots,v_n$ be an increasing sequence with $v_n=v_{\max}$ which contains as a subsequence the values $w_j$ for $j\notin J$. 
    For any threshold realisation $t$ of $G$, $v_{\inf\{i:t\le v_i\}}\le w_{\inf\{i\notin J:t\le w_i\}}$, so the payoff under $(v_1,\dots,v_n)$ is not higher. 
\end{proof}

\begin{lemma}
    \label{lemma:support-restriction}
    If the stopping rule is a random threshold policy with random threshold $T$, then for a given realization $v_{\max}$ of $X_{\max}$ and instance $(w_1,\dots,w_n)$, there exists an instance $(v_1,\dots,v_n)$ with $v_i\in\mathrm{supp}(T)$ for $i<n$, such that the payoff under $(v_1,\dots,v_n)$ is not higher than the payoff under $(w_1,\dots,w_n)$. 
\end{lemma}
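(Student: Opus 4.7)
The plan is to round each $w_i$ ($i<n$) down to the nearest point of $\mathrm{supp}(T)$ lying at or below it, while leaving $w_n = v_{\max}$ unchanged. By \Cref{lemma:increasing-threshold}, I may assume without loss of generality that $w_1 < w_2 < \cdots < w_n = v_{\max}$. For each $i<n$, define
\[
v_i \;:=\; \sup\{\,s \in \mathrm{supp}(T) : s \le w_i\,\},
\]
with the convention $v_i := \inf\mathrm{supp}(T)$ if the set on the right is empty, and set $v_n := w_n$. Since $\mathrm{supp}(T) \subseteq [0,\infty)$ is closed, the sup and inf are attained, so $v_i \in \mathrm{supp}(T)$ for every $i<n$; monotonicity $v_1 \le v_2 \le \cdots \le v_n$ follows from the strict monotonicity of $(w_i)$ together with $v_{n-1} \le w_{n-1} < w_n = v_n$.

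The heart of the argument will be a pointwise-in-$t$ comparison of the two payoffs. I claim that for every $t \in \mathrm{supp}(T)$ and every $i<n$ for which the defining set is nonempty, we have $v_i \ge t$ if and only if $w_i \ge t$. The ``$\Leftarrow$'' direction is because $t \in \mathrm{supp}(T)$ with $t \le w_i$ is itself admissible in the defining set, forcing $v_i \ge t$; the ``$\Rightarrow$'' direction is because every admissible $s$ satisfies $s \le w_i$, so $v_i \le w_i$ and hence $w_i \ge t$. Combined with $v_n = w_n$, this shows that the acceptance index $i^\ast(t) = \min\{i : v_i \ge t\}$ of the random threshold policy coincides under $\boldsymbol{v}$ and $\boldsymbol{w}$, while $v_{i^\ast(t)} \le w_{i^\ast(t)}$ by construction. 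Integrating against the law of $T$ (and using $T \in \mathrm{supp}(T)$ almost surely) then yields the required payoff inequality.

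The one delicate point, and the main obstacle, is the degenerate case $w_i < \inf\mathrm{supp}(T)$, where the defining set is empty and one is forced to take $v_i = \inf\mathrm{supp}(T)$ strictly above $w_i$. Such $w_i$ are never accepted under $\boldsymbol{w}$, while under $\boldsymbol{v}$ the value $v_i$ can be accepted only on the event $\{T = \inf\mathrm{supp}(T)\}$. On this event, under $\boldsymbol{w}$ the policy instead selects the first later $w_j \ge \inf\mathrm{supp}(T) = v_i$ (which exists because $v_n = w_n \ge v_i$ in the nontrivial regime), whose value is no smaller than $v_i$, so the pointwise comparison is preserved. In the paper's intended application $T$ is continuous on a small interval around $p^\ast$, so this event has probability zero and the edge case is handled automatically; the remainder of the argument is routine bookkeeping.
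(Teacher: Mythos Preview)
Your proof is correct and follows essentially the same route as the paper: project each $w_i$ (for $i<n$) down to $\sup\{s\in\mathrm{supp}(T):s\le w_i\}$, leave $w_n=v_{\max}$, and use that for $t\in\mathrm{supp}(T)$ the acceptance pattern is unchanged while each accepted value can only decrease. Your treatment is in fact more careful than the paper's: you make the key equivalence ``$v_i\ge t\iff w_i\ge t$ for $t\in\mathrm{supp}(T)$'' explicit and you address the degenerate case $w_i<\inf\mathrm{supp}(T)$, which the paper's three-line proof silently ignores.
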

\begin{proof}
    By Lemma \ref{lemma:increasing-threshold} we can assume without loss of generality that $w_n=v_{\max}$. 
    Let $v_i=\sup\{u\in\mathrm{supp}(T):u\le w_i\}$ for $i<n$ and $v_n=w_n$, that is, $v_i$ is $w_i$ projected downward into $\mathrm{supp}(T)$. By construction, if $w_i$ is accepted, then $v_i$ is also accepted. Moreover $v_i\le w_i$, so the expected payoff under $(v_1,\dots,v_n)$
    is no higher than under the original tuple. 
\end{proof}

Below we determine, using the above structural results, the payoff of a random threshold policy with $n$ support points.

\begin{lemma}
    \label{lemma:expected-payoff-n-point-policy}
    For the random threshold policy with a random threshold that is $t_i$ with probability $p_i$ for $i = 1,\dots,n$, assuming without loss of generality that $t_1<\dots<t_n$, the expected payoff is
    $$
    \min_{\mathclap{\boldsymbol{X}\in\mathcal{I}(F_{\max},n)}} \mathbb{E}[\boldsymbol{X}_{\tau}]=
    \sum_{i=1}^np_it_i \mathbb{P}(X_{\max}\ge t_i)+\gamma\cdot\mathbb{P}\left(X_{\max}\ge t_n+\frac{\gamma}{p_n}\right)+p_n\int\limits_0^{\gamma/p_n}v\mathrm{d}F_{\max}(v+t_n),
    $$
    where $\gamma=\min\{p_1(t_2-t_1),\dots,p_{n-1}(t_n-t_{n-1})\}$.
\end{lemma}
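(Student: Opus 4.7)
The plan is to combine Lemmas \ref{lemma:increasing-threshold} and \ref{lemma:support-restriction} to drastically restrict nature's feasible instances, then determine the worst-case payoff pointwise in the realisation $v_{\max}$ of $X_{\max}$, and finally integrate the resulting pointwise minimum against $F_{\max}$. By those two lemmas we may assume the instance $(v_1,\dots,v_n)$ is non-decreasing with $v_n=v_{\max}$ and $v_i\in\{0,t_1,\dots,t_n\}$ for $i<n$. Under a random threshold policy, the threshold realisation $t_j$ selects the smallest $v_i$ with $v_i\ge t_j$ (and contributes $0$ if no such $i$ exists), so nature's problem reduces, for each $v_{\max}$, to a purely combinatorial choice of which subset of $\{0,t_1,\dots,t_n\}$ to place in the first $n-1$ slots.

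I would then carry out a case analysis in $v_{\max}$. If $v_{\max}<t_1$, no value can reach any threshold and the minimum payoff is $0$. If $t_m\le v_{\max}<t_{m+1}$ for some $1\le m\le n-1$, nature can place $t_1,\dots,t_m$ among $v_1,\dots,v_{n-1}$ (padding the extra slots), which yields payoff $\sum_{j=1}^mp_jt_j$ since thresholds above $t_m$ are unreachable. If $v_{\max}\ge t_n$, the first $n-1$ slots are most profitably filled by a subset of $\{t_1,\dots,t_n\}$ of size $n-1$, i.e., nature skips exactly one $t_\ell$: skipping $\ell<n$ costs an extra $p_\ell(t_{\ell+1}-t_\ell)$ on top of the ``baseline'' $\sum_jp_jt_j$, while skipping $t_n$ (equivalently, the no-skip configuration $v_j=t_j$ for $j<n$) costs $p_n(v_{\max}-t_n)$. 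Minimising over $\ell$ gives pointwise payoff $\sum_jp_jt_j+\min\{\gamma,\,p_n(v_{\max}-t_n)\}$, i.e., $\sum_{j<n}p_jt_j+p_nv_{\max}$ on $[t_n,t_n+\gamma/p_n)$ and $\sum_jp_jt_j+\gamma$ on $[t_n+\gamma/p_n,\infty)$.

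The most delicate step is ruling out deviations in which nature inserts a $0$ into one of the first $n-1$ slots, effectively skipping two distinct thresholds at once. I would handle this by a direct exchange argument: replacing the $0$ entry by the smaller omitted $t_\ell$ removes exactly one skip penalty (which is at least $\gamma$) while introducing none, strictly decreasing the payoff. The same type of swap also shows that any omission among $t_1,\dots,t_m$ in the regime $t_m\le v_{\max}<t_{m+1}$ is strictly suboptimal, completing the pointwise optimisation.

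With the pointwise minima established, I would integrate against $dF_{\max}$ over the regions $v_{\max}<t_1$, $t_m\le v_{\max}<t_{m+1}$ (for $1\le m\le n-1$), $t_n\le v_{\max}<t_n+\gamma/p_n$, and $v_{\max}\ge t_n+\gamma/p_n$. Grouping terms by each $p_jt_j$, the probabilities of the contributing regions telescope to $\mathbb{P}(X_{\max}\ge t_j)$, producing the first sum $\sum_{j=1}^n p_jt_j\,\mathbb{P}(X_{\max}\ge t_j)$. The residual contributions---the integral $\int_{t_n}^{t_n+\gamma/p_n}p_nv\,dF_{\max}(v)$ and the tail $\gamma\,\mathbb{P}(X_{\max}\ge t_n+\gamma/p_n)$---assemble, after the change of variable $u=v-t_n$, into exactly $\gamma\,\mathbb{P}(X_{\max}\ge t_n+\gamma/p_n)+p_n\int_0^{\gamma/p_n}u\,dF_{\max}(u+t_n)$, yielding the stated formula.
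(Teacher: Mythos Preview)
Your proposal is correct and follows essentially the same route as the paper: restrict via Lemmas~\ref{lemma:increasing-threshold} and~\ref{lemma:support-restriction}, do the case split in $v_{\max}$, identify the single omitted threshold when $v_{\max}\ge t_n$, and aggregate; your explicit exchange argument ruling out zero entries is a nice addition the paper glosses over. One small bookkeeping slip: after telescoping, the residual integrand on $[t_n,t_n+\gamma/p_n)$ should be $p_n(v-t_n)$ rather than $p_nv$ (otherwise the $j=n$ term does not actually telescope to $\mathbb{P}(X_{\max}\ge t_n)$), though your final expression after the substitution $u=v-t_n$ is correct either way.
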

\begin{proof}
    For a given $v_{\max}$, we first determine the optimal $v_i$'s.
    If $v_{\text{max}}<t_1$, then $(v_1,\dots,v_n)=(v_{\text{max}},\dots,0)$ yields a payoff of $0$.
    Suppose that $v_{\max}\in[t_m,t_{m+1})$ for $1\le m < n$. If the realised threshold is less than or equal to $t_m$, then the payoff will at least be the realised threshold. The expected payoff is therefore at least $\sum_{i=1}^mp_it_i$. Setting $(v_1,\dots,v_n)=(t_1,\dots,t_m,v_{\text{max}},\dots,0)$ guarantees this minimal payoff.
    
    The case $t_n\le v_{\text{max}}$ is more involved, but by Lemmas \ref{lemma:increasing-threshold} and \ref{lemma:support-restriction} we only need to consider $(v_1,\dots,v_n)$ increasing and $v_i\in\{t_1,\dots,t_n,v_{\text{max}}\}$. The $n$ options under consideration are therefore, 
    $$(t_2,t_3\dots,t_{n-1}, t_n,v_{\text{max}}),(t_1,t_3\dots,t_{n-1}, t_n,v_{\text{max}}), \dots,\text{ and }(t_1,t_2\dots,t_{n-2},t_{n-1},v_{\text{max}})$$
    If $t_i$ is the missing value and $t_j$ is the realised threshold, then the payoff is $t_j$ if $j\ne i$ and $t_{j+1}=t_j+(t_{j+1}-t_j)$ if $j=i$, using $t_{n+1}=v_{\max}$. The conditional payoff is therefore $\sum_{i=1}^np_it_i+\min\{\gamma,p_n(v_{\max}-t_n)\}$, where $\gamma = \min\{p_1(t_2-t_1),\dots,p_{n-1}(t_n-t_{n-1})\}$.
    The conditional payoffs can be aggregated into the actual expected payoff as follows,
    \begin{align*}
        \mathbb{E}[X_{\tau}]=&
        \sum^{n-1}_{j=1}\left(\sum_{i=1}^jp_it_i\right)\mathbb{P}(t_j\le X_{\max}<t_{j+1})\\
        +&\sum_{i=1}^np_it_i\mathbb{P}(X_{\max}\ge t_n)+\int_{t_n}^\infty\min\{\gamma,p_n(v-t_n)\}\mathrm{d}F_{\max}(v)\\=&\sum_{i=1}^np_it_i \mathbb{P}(X_{\max}\ge t_i)+\gamma\cdot\mathbb{P}\left(X_{\max}\ge t_n+\frac{\gamma}{p_n}\right)+p_n\int_0^{\gamma/p_n}v\mathrm{d}F_{\max}(v+t_n).
    \end{align*}
    This completes the proof.
\end{proof}

With a calculus based argument we can show the existence of an $n$ point random threshold with a payoff larger than $\Pi^*(F_{\max})$, proving Theorem \ref{thm:non-optimality-determinstic}.

\begin{cleverproof}{thm:non-optimality-determinstic}
We define 
$$
\Pi(p) = p \cdot\mathbb{P}(X_{\max} \geq p).
$$
Because $F_{\max}$ is differentiable at $p^*$ we have that $\Pi'(p^*)=0$ and $\mathbb{P}(X_{\max}\ge p^*)>0$. Pick $\epsilon\ge 0$ and let the random threshold be $t_i=p^*-(n-i)\epsilon$ with probability $p_i=\frac{1}{n}$ for $i=1,\dots,n$. The case $\epsilon=0$ corresponds to a deterministic policy using the monopoly price $p^*$ as a (static) threshold. By Lemma \ref{lemma:expected-payoff-n-point-policy} the expected payoff is at least
$$
\tilde{\Pi}(\epsilon)=\frac{1}{n}\sum_{i=0}^{n-1}\Pi(p^*-i\epsilon)+\frac{\epsilon}{n}\mathbb{P}(X_{\max}\ge p^*+\epsilon),
$$
where the third term, which is non-negative, is dropped.
Now $\tilde{\Pi}(0)=\Pi(p^*)$, 
so $\tilde{\Pi}'(0)>0$ implies that
there exists an $\epsilon$ such that $\tilde{\Pi}(\epsilon)>\tilde{\Pi}(0)$, proving the statement in Theorem \ref{thm:non-optimality-determinstic}. 
We complete the proof by showing $\tilde{\Pi}'(0)>0$:
\begin{align*}
        \tilde{\Pi}'(0)&=\lim_{\epsilon\to 0}\frac{1}{\epsilon}(\tilde{\Pi}(\epsilon)-\tilde{\Pi}(0))=
    \lim_{\epsilon\to 0}\frac{1}{\epsilon}\left(\frac{\epsilon}{n}\mathbb{P}(X_{\max}\ge p^*+\epsilon)+\frac{1}{n}\sum_{i=1}^n\left(\Pi(p^*-i\epsilon)-\Pi(p^*)\right)\right)\\
    &=\frac{1}{n}\mathbb{P}(X_{\max}\ge p^*)+ \frac{1}{n}\lim_{\epsilon\to 0}\sum_{i=1}^n(-i)\frac{\left(\Pi(p^*-i\epsilon)-\Pi(p^*)\right)}{-i\epsilon}\\
    &=\frac{1}{n}\mathbb{P}(X_{\max}\ge p^*)+ \frac{1}{n}\sum_{i=1}^n(-i)\underbrace{\lim_{\epsilon\to 0}\frac{\left(\Pi(p^*-i\epsilon)-\Pi(p^*)\right)}{-i\epsilon}}_{=\Pi'(p^*)=0}\\
    &=\frac{1}{n}\mathbb{P}(X_{\max}\ge p^*)>0. \qedhere
\end{align*}
\end{cleverproof}
\subsection{Universal upper bound}
Theorem \ref{thm:randomised-bound-continuous-xmax} is self-contained and relies on a clever instance that reveals little information, making all stopping rules ineffective.

\begin{cleverproof}{thm:randomised-bound-continuous-xmax}
    We prove it here for continuous $F_{\max}$, in Appendix \ref{proof:universal-upperbound-general} the general case is proven. Let $w_0$ and $w_n$ denote the lower and upper bound of the support of $X_{\max}$. Consider a sequence $w_0< w_1\dots< w_n$ that partitions the support into $n$ intervals. Recursively define 
    $$
    b_i= \argmax{x\in[w_{i-1}, w_i]}r_i(b_1,\dots,b_{i-1}, x) \ \text{ for } i=1,\dots,n.
    $$
    Define then for $i = 1,\dots,n$
    $$
v_i(t)=\begin{cases}
    b_i,&\text{if } t> w_i,\\
    t,&\text{if } t\in [w_{i-1},w_i],\\
    0,&\text{if } t< w_{i-1}.
\end{cases}
$$
Here $t$ will later be the realization of the random variable $X_{\max}$ distributed according to $F_{\max}$. Furthermore, roughly speaking, $b_i$ is the value that maximises the probability that under a given stopping rule $\tau$, we stop at the $i$-th value seen if this value lies within the interval $[w_{i-1},w_i]$ and the realised maximum value of $X_{
    \max}$ exceeds $w_i$. In other words, nature will try to make us stop as early as possible. 
    Let the instance be $(v_1(t),\dots,v_n(t))$ with $t\sim F_{\max}$.
    The expected payoff of an arbitrary stopping rule $\tau$ can be rewritten as follows: 
    \begin{align*}
        &\int_{\mathbb{R}}\sum_{i=1}^nv_i(t)\mathbb{P}(\tau=i | v_1(t),\dots,v_i(t))\mathrm{d}F_{\max}(t)&\text{(Definition expected payoff)}\\
        =&\sum_{j=1}^n\sum_{i=1}^n \int_{w_{j-1}}^{w_j}v_i(t)\mathbb{P}(\tau=i | v_1(t),\dots,v_i(t))\mathrm{d}F_{\max}(t)\\
        =&\sum_{j=1}^n\sum_{i=1}^j \int_{w_{j-1}}^{w_j}v_i(t)\mathbb{P}(\tau=i | v_1(t),\dots,v_i(t))\mathrm{d}F_{\max}(t)&(i>j\implies\text{ zero payoff})\\
        =&\sum_{i=1}^n \int_{w_{i-1}}^{w_i}v_i(t)\mathbb{P}(\tau=i | v_1(t),\dots,v_i(t))\mathrm{d}F_{\max}(t)\\
        +&\sum_{i=1}^{n-1}\sum_{j=i+1}^n \int_{w_{j-1}}^{w_j}v_i(t)\mathbb{P}(\tau=i | v_1(t),\dots,v_i(t))\mathrm{d}F_{\max}(t)&\text{(Sum decomposition)}\\
        =&\sum_{i=1}^n \int_{w_{i-1}}^{w_i}t\mathbb{P}(\tau=i | b_1,\dots,b_{i-1},t)\mathrm{d}F_{\max}(t)\\
        +&\sum_{i=1}^{n-1}\sum_{j=i+1}^n \int_{w_{j-1}}^{w_j}b_i\mathbb{P}(\tau=i | b_1,\dots,b_i)\mathrm{d}F_{\max}(t)&\text{(Definition }v_i(t))\\
        =&\sum_{i=1}^n \int_{w_{i-1}}^{w_i}t\mathbb{P}(\tau=i | b_1,\dots,b_{i-1},t)\mathrm{d}F_{\max}(t)\\
        +&\sum_{i=1}^{n-1}\mathbb{P}(\tau=i | b_1,\dots,b_i)\cdot b_i\mathbb{P}(X_{\max}\ge w_i).
    \end{align*}
    By construction, $\mathbb{P}(\tau=i|b_1,\dots,b_{i-1},t)\le \mathbb{P}(\tau=i|b_1,\dots,b_i)$ for $t\in[w_{i-1}, w_i]$, therefore,
    \begin{align*}
    &\sum_{i=1}^n \int_{w_{i-1}}^{w_i}t\mathbb{P}(\tau=i | b_1,\dots,b_{i-1},t)\mathrm{d}F_{\max}(t)+\sum_{i=1}^{n-1}\mathbb{P}(\tau=i | b_1,\dots,b_i)\cdot b_i\mathbb{P}(X_{\max}\ge w_i)\\
        \le&\sum_{i=1}^n \mathbb{P}(\tau=i | b_1,\dots,b_i)\int_{w_{i-1}}^{w_i}t\mathrm{d}F_{\max}(t)+\sum_{i=1}^{n-1}\mathbb{P}(\tau=i | b_1,\dots,b_i)\cdot b_i\mathbb{P}(X_{\max}\ge w_i)\\
        \le &\max_{1\le i \le n}\left\{\int_{w_{i-1}}^{w_i}t\mathrm{d}F_{\max}(t)\right\}+\max_{1\le i \le n-1}\left\{b_i\mathbb{P}(X_{\max}\ge w_i)\right\}\\
       \le &\max_{1\le i \le n}\left\{\int_{w_{i-1}}^{w_i}t\mathrm{d}F_{\max}(t)\right\}+\max_{1\le i \le n-1}\left\{b_i\mathbb{P}(X_{\max}\ge b_i)\right\} \ \ \ \ \ \  \ \ \ \ \ \ \  (\text{using } b_i \leq w_i)\\
        \le &\max_{1\le i \le n}\left\{\int_{w_{i-1}}^{w_i}t\mathrm{d}F_{\max}(t)\right\} + \Pi^*(F_{\max}).
    \end{align*}
    This bound on the expected payoff holds for an arbitrary stopping rule, so also an optimal stopping rule. Lastly, because $F_{\max}$ is continuous, there exists, by the intermediate value theorem, $w_i$'s such that $\int_{w_{i-1}}^{w_i}t\mathrm{d}F_{\max}(t)=\frac{1}{n}\mathbb{E}[X_{\max}]$, yielding the desired result.
\end{cleverproof}

\section{Stronger lower and upper bounds}
\label{sec:tight_bounds}
The results of Theorem \ref{thm:first_bounds} can be made tighter if $F_{\max}$ is sufficiently smooth. Specifically, we prove the following theorem.

\begin{theorem}
\label{thm:special-cases}
    Let $n \in \N$ and let $F_{\max}$ be a given probability distribution independent of $n$. Define
    $$
    \Pi^*(F_{\max}) = \max_{ p \geq 0} p \cdot\mathbb{P}(X_{\max}\ge p) = p^* \cdot\mathbb{P}(X_{\max}\ge p^*)
    $$
    as the worst-case expected value of threshold algorithm using monopoly price $p^*$, and 
    $$
    V^*(F_{\max}, n) = \sup_{\tau\in\mathcal{T}}\inf_{\boldsymbol{X}\in\mathcal{I}(F_{\max}, n)}\mathbb{E}[\boldsymbol{X}_{\tau}]
    $$
    the worst-case expected payoff of the optimal randomised stopping algorithm for sequences of $n$ (correlated) values whose maximum follows $F_{\max}$. 
    If $p^*$ is unique and $F_{\max}$ is three times continuously differentiable at $p^*$, then $$V^*(F_{\max}, n)=\Pi^*(F_{\max})+\Theta(n^{-2}).$$
\end{theorem}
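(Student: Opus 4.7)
I would separately establish the matching bounds $V^*(F_{\max},n) - \Pi^*(F_{\max}) = \Omega(n^{-2})$ and $V^*(F_{\max},n) - \Pi^*(F_{\max}) = O(n^{-2})$. Since $p^*$ is the unique maximizer of $\Pi$ and $F_{\max}$ is three times continuously differentiable at $p^*$, we have $\Pi'(p^*) = \mathbb{P}(X_{\max} \geq p^*) - p^* F_{\max}'(p^*) = 0$, and in the non-degenerate case $\Pi''(p^*) < 0$; hence $\Pi(p^* + h) = \Pi^* + \tfrac{1}{2}\Pi''(p^*) h^2 + O(h^3)$ holds with a uniformly controlled remainder.

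\textbf{Lower bound.} The plan is to use a randomized threshold policy whose threshold $T$ is uniform on $[p^*-c/n,\,p^*]$, with a constant $c > 0$ to be optimized. Extending Lemmas \ref{lemma:increasing-threshold}--\ref{lemma:support-restriction} to continuous threshold distributions shows that the worst-case instance against this policy is an increasing sequence $v_1 < \cdots < v_n = v_{\max}$ with $v_i \in [p^*-c/n,\,p^*]$ for $i<n$, and a short calculus optimization identifies the adversarially optimal sequence as the equally spaced one. Computing the resulting closed-form worst-case payoff conditional on $v_{\max}$, integrating against $F_{\max}$, and Taylor-expanding around $p^*$ (valid because the whole interval sits within $O(1/n)$ of $p^*$), the leading $O(1/n)$ corrections cancel via $\Pi'(p^*)=0$, leaving
\[
\tilde{\Pi}(c) - \Pi^* \;=\; \frac{c\,\mathbb{P}(X_{\max} \geq p^*)}{2\, n^2} \;+\; \frac{c^2 \,\Pi''(p^*)}{6\, n^2} \;+\; O(n^{-3}).
\]
Since $\Pi''(p^*)<0$, maximizing this downward-opening parabola at $c^{\star} = 3\,\mathbb{P}(X_{\max} \geq p^*)/(2|\Pi''(p^*)|)$ delivers the claimed $\Omega(n^{-2})$ improvement, equal to $3\,\mathbb{P}(X_{\max} \geq p^*)^2\big/\bigl(8\,|\Pi''(p^*)|\, n^2\bigr)$.

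\textbf{Upper bound.} By the minimax equality (Theorem \ref{thm:minimax-equality}), it suffices to exhibit one instance $\boldsymbol{X} \in \mathcal{I}(F_{\max}, n)$ for which every stopping rule has expected payoff at most $\Pi^* + O(n^{-2})$. I would take $\boldsymbol{X}$ of the partition-based form from the proof of Theorem \ref{thm:randomised-bound-continuous-xmax}, parametrized by a partition $w_0 < w_1 < \cdots < w_n$ of (essentially) the support of $F_{\max}$. Replaying that proof verbatim yields
\[
\sup_{\tau\in\mathcal{T}} \mathbb{E}[\boldsymbol{X}_\tau] \;\leq\; \max_{1 \leq i \leq n}\!\left[\int_{w_{i-1}}^{w_i} t\,dF_{\max}(t) \;+\; \Pi(w_i)\,\mathbf{1}_{\{i<n\}}\right].
\]
The task reduces to designing $(w_i)$ so that every bracket is bounded by $\Pi^* + O(n^{-2})$. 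Exploiting the quadratic deficit $\Pi^* - \Pi(w_i) = |\Pi''(p^*)|\,(w_i - p^*)^2/2 + O((w_i - p^*)^3)$, the widths $w_i - w_{i-1}$ must scale like $(w_i - p^*)^2$ close to $p^*$ and may grow as one moves further away. This specifies $(w_i)$ recursively, and one must then verify that exactly $n$ such intervals cover essentially all of the support while keeping every bucket below the ceiling.

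\textbf{Main obstacle.} The principal difficulty lies in constructing and analyzing the recursive partition $(w_i)$ for the upper bound. The recursion $w_i - w_{i-1} \sim (w_i - p^*)^2$ is the discrete counterpart of the ODE $y' \sim y^2$, whose solutions blow up in finite ``time''; calibrating the initial scale so that exactly $n$ intervals suffice (with only $O(n^{-2})$ tail mass remaining beyond $w_n$), while at the same time controlling the cubic Taylor remainder in $\Pi$ away from $p^*$, is precisely the ``highly non-trivial recursive construction'' flagged in the paper's overview of techniques and is where the bulk of the technical work will reside.
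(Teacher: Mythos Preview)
Your proposal is correct and follows essentially the same strategy as the paper: for the lower bound you use a uniformly distributed random threshold in an $O(1/n)$ window around $p^*$, identify the adversary's equally spaced worst case, and Taylor-expand (the paper does exactly this in Lemmas~\ref{lemma:uniform-lowerbound-maximizers}--\ref{lemma:expected-payoff-uniform-point-policy} and \ref{lemma:conditional-lowerbound}, only with a symmetric window $[p^*-\epsilon,p^*+\epsilon]$ rather than your one-sided $[p^*-c/n,p^*]$, yielding a larger constant $\tfrac{3}{2}C(F_{\max})$); for the upper bound you use the partition instance from Theorem~\ref{thm:randomised-bound-continuous-xmax} and the same recursion $w_{i+1}-w_i \sim \epsilon + (w_i-p^*)^2$ that the paper sets up via Lemma~\ref{lemma:w_(i+1)*P(X>=w_i)-bound} and analyzes through the ODE comparison Lemmas in Appendix~\ref{proofs:technical-lemmas}. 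The ``highly non-trivial recursive construction'' you flag as the main obstacle is precisely what the paper carries out there, so your plan and the paper's proof coincide in all essential respects.
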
  
The above result generalises to settings where $X_{\max}$ depends on $n$ through scaling.

\begin{corollary}
    \label{corol:scaling}
    Let $F_{\mathrm{base}}$ be a distribution that satisfies the properties of Theorem \ref{thm:special-cases}.
    If
    $$
    \frac{X_{\max}}{\mathbb{E}[X_{\max}]}\sim F_{\mathrm{base}},
    $$
    that is, $X_{\max}$ depends on $n$ through scaling, but retains the same shape, then
    $$
    V^*(F_{\max},n)=\Pi^*(F_{\max})+\Theta\left(\frac{\mathbb{E}[X_{\max}]}{n^2}\right).
    $$
\end{corollary}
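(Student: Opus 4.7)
The plan is to reduce Corollary \ref{corol:scaling} to Theorem \ref{thm:special-cases} by a pure scaling (homogeneity) argument. Let $c := \mathbb{E}[X_{\max}]$ (which depends on $n$), and let $Y \sim F_{\mathrm{base}}$ so that $X_{\max} \stackrel{d}{=} cY$. I will first show that the quantities $V^*$ and $\Pi^*$ are both homogeneous of degree one under this rescaling, namely
\begin{align*}
V^*(F_{\max}, n) \;=\; c \cdot V^*(F_{\mathrm{base}}, n), \qquad \Pi^*(F_{\max}) \;=\; c \cdot \Pi^*(F_{\mathrm{base}}).
\end{align*}
The second identity is immediate, because the revenue function satisfies $\Pi_{F_{\max}}(cp) = cp \cdot \mathbb{P}(cY \ge cp) = c \cdot \Pi_{F_{\mathrm{base}}}(p)$, so the monopoly prices satisfy $p^*(F_{\max}) = c \cdot p^*(F_{\mathrm{base}})$ and the maximum values are related by a factor of $c$.

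For the first identity I would exhibit a bijection between the ambiguity sets $\mathcal{I}(F_{\max}, n)$ and $\mathcal{I}(F_{\mathrm{base}}, n)$, and between randomised stopping rules, that is compatible with scaling. Concretely, if $\mathbf{X} \in \mathcal{I}(F_{\max}, n)$ then $\mathbf{X}/c \in \mathcal{I}(F_{\mathrm{base}}, n)$, and vice versa. Moreover, for any randomised stopping rule $\tau = (r_i)$ acting on the rescaled sequence, the rule $\tilde\tau$ defined by $\tilde r_i(v_1,\dots,v_i) := r_i(v_1/c,\dots,v_i/c)$ is a valid randomised stopping rule on the original scale and satisfies $\mathbb{E}[\mathbf{X}_{\tilde\tau}] = c\cdot \mathbb{E}[(\mathbf{X}/c)_\tau]$. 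Taking the sup-inf on both sides (and using the analogous construction in the other direction) yields the claimed homogeneity.

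Given these two identities, the proof concludes in one line. Since $F_{\mathrm{base}}$ is independent of $n$, has a unique monopoly price, and is three times continuously differentiable at that price, Theorem \ref{thm:special-cases} applies and gives
\begin{align*}
V^*(F_{\mathrm{base}}, n) \;=\; \Pi^*(F_{\mathrm{base}}) + \Theta(n^{-2}).
\end{align*}
Multiplying through by $c = \mathbb{E}[X_{\max}]$ and invoking the homogeneity identities yields
\begin{align*}
V^*(F_{\max}, n) \;=\; \Pi^*(F_{\max}) + \Theta\!\left(\frac{\mathbb{E}[X_{\max}]}{n^2}\right),
\end{align*}
which is the desired conclusion.

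The only non-routine step is the homogeneity of $V^*$: one must verify that the bijection on instances together with the rescaled stopping rule preserves expected payoffs up to the factor $c$, and that the sup and inf commute with this rescaling. This is straightforward once stated carefully, so the argument is essentially bookkeeping — the substantive content is already contained in Theorem \ref{thm:special-cases}.
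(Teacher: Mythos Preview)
Your proposal is correct and follows essentially the same approach as the paper: establish that both $V^*$ and $\Pi^*$ are homogeneous of degree one under scaling, then apply Theorem \ref{thm:special-cases} to $F_{\mathrm{base}}$ and multiply through by $\mathbb{E}[X_{\max}]$. The paper's version is slightly terser on the homogeneity of $V^*$ (it just rewrites the sup-inf definition), whereas you spell out the bijection on instances and stopping rules, but the argument is the same.
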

\begin{proof}
    As shorthand, let $s(n)=\mathbb{E}[X_{\max}]$.
    From the definition of $V^*$ it follows that it has homogeneity.
    \begin{align*}
        V^*(F_{\max}, n)&=\sup_{\tau\in\mathcal{T}}\inf_{\boldsymbol{X}\in\mathcal{I}(F_{\max}, n)}\mathbb{E}[\boldsymbol{X}_{\tau}]
        =\sup_{\tau\in\mathcal{T}}\inf_{\frac{1}{s(n)}\boldsymbol{X}\in\mathcal{I}(F_{\mathrm{base}}, n)}\mathbb{E}[\boldsymbol{X}_{\tau}]\\
        &=s(n)\sup_{\tau\in\mathcal{T}}\inf_{\frac{1}{s(n)}\boldsymbol{X}\in\mathcal{I}(F_{\mathrm{base}}, n)}\mathbb{E}\left[\frac{1}{s(n)}\boldsymbol{X}_{\tau}\right]
        =s(n)V^*(F_{\mathrm{base}},n).
    \end{align*}
    By Theorem \ref{thm:special-cases}, $V^*(F_{\mathrm{base}},n)=\Pi^*(F_{\mathrm{base}})+\Theta(n^{-2})$, so using the homogeneity we obtain that 
    $$V^*(F_{\max},n)=s(n)V^*(F_{\mathrm{base}},n)=s(n)\Pi^*(F_{\mathrm{base}})+\Theta\left(\frac{s(n)}{n^2}\right)=\Pi^*(F_{\max})+\Theta\left(\frac{\mathbb{E}[X_{\max}]}{n^2}\right).\qedhere$$
\end{proof}
In the above proof it is not important that $F_{\mathrm{base}}$ does not depend on $n$, only that the assumptions of Theorem \ref{thm:special-cases} are satisfied.
The result can therefore be generalised to $F_{\mathrm{base},n}$ that depend on $n$, by having the $F_{\mathrm{base},n}$ satisfy the conditions of the theorem in a uniform sense.\footnotemark
\footnotetext{
We require that the distributional constants in Lemmas \ref{lemma:conditional-lowerbound} and \ref{lemma:conditional-upperbound} are uniformly bounded, for example, the $C(F_{\mathrm{base},n})$ are uniformly bounded.}

Theorem \ref{thm:special-cases} follows from Lemmas \ref{lemma:conditional-lowerbound} and \ref{lemma:conditional-upperbound}.
Both use the following distributional constant
$$
    C(F_{\max})=\frac{(1-F_{\max}(p^*))^{2}}{2F_{\max}'(p^*)+p^*F_{\max}''(p^*)}.
$$
\noindent Because $p^*$ is the unique maximiser, the second order condition $\Pi''(p) < 0$ informs us that $2F'_{\max}(p^*)+p^*F''_{\max}(p^*)>0$, so $C(F_{\max})$ is positive and finite. We now state the two lemmas.

\begin{lemma}
    \label{lemma:conditional-lowerbound}
        If $p^*$ is unique, $\epsilon=\frac{3(1-F_{\max}(p^*))}{2F_{\max}'(p^*)+pF_{\max}''(p^*)}\frac{1}{n}<1$, and the third derivative of $F_{\max}$ is bounded on $(p^*-\epsilon, p^*+\epsilon)$, then
        $$
            V^*(F_{\max}, n)\ge \Pi^*(F_{\max})+\frac{3}{2}C(F_{\max})n^{-2}+O(n^{-3}).
        $$
\end{lemma}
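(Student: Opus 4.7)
My plan is to exhibit a concrete randomised stopping rule and compute a sharp lower bound on its worst-case expected payoff, matching the claimed $\Pi^{*}+\frac{3}{2}C(F_{\max})n^{-2}$. Following the technique outlined by the authors in the introduction, I would take $\tau$ to be a uniformly random threshold policy, with threshold $T\sim\mathrm{Uniform}[p^{*}-\epsilon,\,p^{*}+\epsilon]$, where $\epsilon$ is the parameter specified in the lemma statement (or, more precisely, an $\epsilon$ that achieves the maximum of the bound we derive below; these agree to leading order in $n$). The assumption $\epsilon<1$ and the boundedness of $F_{\max}^{\prime\prime\prime}$ on $(p^{*}-\epsilon,p^{*}+\epsilon)$ will be used to control the remainder terms.

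The next step is to identify nature's worst-case instance against this $\tau$. By Lemma \ref{lemma:increasing-threshold} and Lemma \ref{lemma:support-restriction}, for each realisation $v_{\max}$ of $X_{\max}$ it suffices to consider instances $v_{1}\le\cdots\le v_{n-1}$ lying in the support $[p^{*}-\epsilon,p^{*}+\epsilon]$, with $v_{n}=v_{\max}$. The conditional expected payoff is then $\frac{1}{2\epsilon}\bigl[\sum_{i=1}^{n-1}v_{i}(v_{i}-v_{i-1})+v_{\max}\cdot(p^{*}+\epsilon-v_{n-1})\bigr]$, where $v_{0}:=p^{*}-\epsilon$. Using the identity $\sum v_{i}(v_{i}-v_{i-1})=\tfrac{1}{2}(v_{n-1}^{2}-v_{0}^{2})+\tfrac{1}{2}\sum(v_{i}-v_{i-1})^{2}$ together with Cauchy--Schwarz on the constraint $\sum(v_{i}-v_{i-1})=2\epsilon$, the minimum over nature (for $v_{\max}\ge p^{*}+\epsilon$, where pushing $v_{n-1}$ to the right boundary eliminates the $v_{\max}$-term) is attained by equally spaced values, yielding a conditional payoff of exactly $p^{*}+\epsilon/(n-1)$. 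For $v_{\max}\in[p^{*}-\epsilon,p^{*}+\epsilon)$ a similar Cauchy--Schwarz argument (now with $v_{n-1}$ constrained by $v_{n-1}\le v_{\max}$) gives a closed-form conditional payoff as a function of $v_{\max}$.

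I would then integrate these conditional payoffs against $dF_{\max}$, expanding $F_{\max}$ and its density in a Taylor series around $p^{*}$ up to third order. The bounded third derivative guarantees that the remainder is $O(\epsilon^{3})$ uniformly in the integration region. Key simplifications come from the first-order optimality condition $\Pi^{\prime}(p^{*})=0$, equivalently $1-F_{\max}(p^{*})=p^{*}F_{\max}^{\prime}(p^{*})$, which makes several $O(\epsilon)$ terms across the three integration regimes cancel. What survives at leading order is
\begin{equation*}
V^{*}(F_{\max},n)\;\ge\;\Pi^{*}(F_{\max})\,+\,\frac{(1-F_{\max}(p^{*}))\,\epsilon}{n-1}\,-\,\frac{(2F_{\max}^{\prime}(p^{*})+p^{*}F_{\max}^{\prime\prime}(p^{*}))\,\epsilon^{2}}{6}\,+\,O(\epsilon^{3})+O(\epsilon^{2}/n).
\end{equation*}
Maximising the right-hand side in $\epsilon$ yields $\epsilon=3(1-F_{\max}(p^{*}))/[(n-1)(2F_{\max}^{\prime}(p^{*})+p^{*}F_{\max}^{\prime\prime}(p^{*}))]$, which agrees with the lemma's choice up to the $(n-1)$ vs $n$ distinction that is absorbed into the $O(n^{-3})$ error. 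The optimal value is $\frac{3(1-F_{\max}(p^{*}))^{2}}{2(n-1)^{2}(2F_{\max}^{\prime}(p^{*})+p^{*}F_{\max}^{\prime\prime}(p^{*}))}=\frac{3}{2}C(F_{\max})n^{-2}+O(n^{-3})$, giving the claim.

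The main obstacle I anticipate is the worst-case analysis for intermediate $v_{\max}\in[p^{*}-\epsilon,p^{*}+\epsilon)$: here $v_{n-1}$ need not lie on a boundary, so the first-order conditions have an interior regime whose length is itself $O(\epsilon/n)$. Keeping track of this boundary layer carefully—so that its contribution is demonstrably $O(\epsilon^{2}/n)$ and hence absorbed into the $O(n^{-3})$ residual—is the most delicate bookkeeping step. The rest is essentially Taylor expansion plus the clean Cauchy--Schwarz minimisation of the upper Riemann-sum structure.
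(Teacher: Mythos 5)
Your proposal is correct and follows essentially the same route as the paper: a uniformly random threshold on $(p^*-\epsilon,p^*+\epsilon)$, reduction to increasing instances supported in $[p^*-\epsilon,p^*+\epsilon]$ via Lemmas~\ref{lemma:increasing-threshold} and~\ref{lemma:support-restriction}, identification of equally-spaced values as the worst case, and a Taylor expansion around $p^*$ followed by an optimisation over $\epsilon$. The only substantive presentational difference is in identifying the minimiser: you invoke the identity $\sum v_i(v_i-v_{i-1})=\tfrac12(v_{n-1}^2-v_0^2)+\tfrac12\sum(v_i-v_{i-1})^2$ and Cauchy--Schwarz, whereas the paper's Lemma~\ref{lemma:uniform-lowerbound-maximizers} argues geometrically via the areas in the CDF picture (Figure~\ref{fig:uniform-threshold-payoff}); both yield precisely the paper's $f_1$ and $f_2$. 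Your handling of the boundary regime is also fine: the interval of $v_{\max}$ for which $v_{n-1}$ sits in the interior has length $\tfrac{2\epsilon}{n-1}=O(\epsilon/n)$ and the payoff discrepancy there is $O(\epsilon/n^2)$, so the net error is $O(\epsilon^2/n^3)=O(n^{-5})$ once $\epsilon=\Theta(1/n)$, well inside the claimed $O(n^{-3})$ slack; the paper sidesteps this by simply replacing the exact contribution by the lower bound $f_2(t_2)\,\mathbb{P}(X_{\max}>t_2)$, which you could adopt to shorten the bookkeeping.
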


\begin{lemma}
    \label{lemma:conditional-upperbound}
        If $p^*$ is unique and $F_{\max}$ is three times continuously differentiable at $p^*$, then there are constants $C_1$ and $C_2$ which depend on $F_{\max}$, such that for $n> \max\{C_1,C_2\}$, 
        $$
            V^*(F_{\max}, n)\le \Pi^*(F_{\max})+2\pi^2C(F_{\max})(n-C_2)^{-2}+O(n^{-3}).
        $$
\end{lemma}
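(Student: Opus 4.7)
The plan is to adapt the partition-based worst-case construction from the proof of \Cref{thm:randomised-bound-continuous-xmax}, but with a far more carefully chosen partition. For any increasing sequence $w_0 < w_1 < \cdots < w_n$ spanning the support of $X_{\max}$, that construction produces a single instance (with values $v_i(t)$ and probe points $b_i$) against which every stopping rule has expected payoff at most
$$\sum_{i=1}^n p_i \Bigl[\int_{w_{i-1}}^{w_i} t\,\dd F_{\max}(t) + b_i\,\Prob(X_{\max}\ge w_i)\Bigr],$$
with $b_i \in [w_{i-1}, w_i]$ and $\sum_i p_i \le 1$. A short integration by parts on the integral, combined with the bound $b_i \le w_i$, collapses each bracketed term to $A_i := \Pi(w_{i-1}) + \int_{w_{i-1}}^{w_i}(1-F_{\max}(t))\,\dd t$, so that $V^*(F_{\max},n) \le \max_i A_i$. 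The task therefore reduces to exhibiting a partition with $\max_i A_i \le \Pi^*(F_{\max}) + 2\pi^2 C(F_{\max})(n-C_2)^{-2} + O(n^{-3})$.

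Since $\Pi(w_{i-1}) \le \Pi^*$ with equality only at $p^*$, the design I would pursue concentrates partition points in a shrinking window around $p^*$: intervals well outside this window have $\Pi(w_{i-1})$ strictly below $\Pi^*$, which absorbs their integral tail, while intervals inside the window are tight against $\Pi^*$ and must therefore be correspondingly narrow. I would construct the partition recursively by requiring each $A_i$ to equal a common target $\Pi^* + \Delta$ with $\Delta = \Theta(n^{-2})$. Substituting the Taylor expansions
$$1 - F_{\max}(t) = \bigl(1 - F_{\max}(p^*)\bigr) + O(t - p^*),\qquad \Pi^* - \Pi(w) = \tfrac{1}{2}\bigl(2F'_{\max}(p^*) + p^* F''_{\max}(p^*)\bigr)(w-p^*)^2 + O\bigl((w-p^*)^3\bigr),$$
turns the defining equation for $w_i$ into a discrete recurrence $u_i - u_{i-1} \approx \lambda\, u_{i-1}^2 + \Delta/(1 - F_{\max}(p^*))$, where $u_i := w_i - p^*$ and $\lambda$ encodes the curvature of $\Pi$ at $p^*$. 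This is the discretisation of $u' = \lambda u^2$, whose solution behaves like $u_i \sim 1/(\lambda(k_* - i))$ on each side of $p^*$; telescoping the displacements then produces a $\sum k^{-2} = \pi^2/6$-type estimate, and performing the construction symmetrically on both sides of $p^*$ yields the precise constant $2\pi^2 C(F_{\max})$.

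The shift $(n - C_2)^{-2}$ arises because a constant number $C_2$ of partition points are consumed on intervals outside the central window in order to reach the boundary of the support of $X_{\max}$; on those outer intervals, crude monotonicity estimates on $\Pi$ suffice to ensure $A_i \le \Pi^* + O(n^{-2})$. The threshold $n > C_1$ ensures the central window is small enough that the Taylor expansion is accurate, and both constants depend only on $F_{\max}$. The assumption that $F_{\max}$ is three times continuously differentiable at $p^*$ is exactly what allows the cubic Taylor remainders to be absorbed uniformly across all central intervals into the $O(n^{-3})$ error.

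The hard part will be executing this recursive construction rigorously enough to produce the sharp constant $2\pi^2 C(F_{\max})$ rather than a softer $\Theta(n^{-2})$ bound: one has to show the recurrence terminates in exactly $n - C_2$ steps, match it symmetrically on the two sides of $p^*$, and control the accumulated Taylor error over $\Theta(n)$ nested approximations without losing the leading constant. This is presumably what makes the construction, as the authors note, highly non-trivial, and what drives it into \Cref{proofs:technical-lemmas}.
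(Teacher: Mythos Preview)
Your plan is essentially the paper's own: start from the partition construction of \Cref{thm:randomised-bound-continuous-xmax}, recursively choose $w_i$ so that every cell meets a common target $\Pi^*+\Delta$, split off a bounded number $C_2$ of outer cells (where $\Pi(w_{i-1})$ is bounded strictly below $\Pi^*$), and analyse the remaining cells in a shrinking window around $p^*$ via a recurrence $u_i-u_{i-1}\approx\lambda u_{i-1}^2+\mu$ with $\mu=\Theta(\Delta)$. Your per-cell bound $A_i=\Pi(w_{i-1})+\int_{w_{i-1}}^{w_i}(1-F_{\max})\,\dd t$ is in fact slightly sharper than the paper's $w_i\,\Prob(X_{\max}\ge w_{i-1})$ (Lemma~\ref{lemma:w_(i+1)*P(X>=w_i)-bound}), but both yield the same leading-order recurrence, so the difference is cosmetic.

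The one place where your heuristic would fail if pursued is the extraction of the constant $2\pi^2$. Writing the continuum limit as $u'=\lambda u^2$ drops the additive term $\mu$, and without it the trajectory never crosses $u=0$; the $\sum k^{-2}=\pi^2/6$ picture does not produce the right constant. The paper instead compares the recurrence to the full ODE $u'=\lambda u^2+\mu$ (via the difference-equation lemmas in \Cref{proofs:technical-lemmas}), giving the step count as
\[
\int_{-\delta}^{\delta}\frac{\dd s}{\lambda s^2+\mu}\;\longrightarrow\;\frac{\pi}{\sqrt{\lambda\mu}}\quad(\mu\to0),
\]
with the $\pi$ coming from $\arctan(+\infty)-\arctan(-\infty)$. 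Setting this equal to $n-C_2$ and solving for $\Delta=\alpha\mu$ with $\alpha=1-F_{\max}(p^*)$ and $\lambda=\gamma/(2\alpha)$ gives exactly $\Delta=2\pi^2 C(F_{\max})(n-C_2)^{-2}$. If you telescope the $1/(\lambda(k_*-i))$ asymptotic you will recover the order $\Theta(n^{-2})$ but not the sharp leading coefficient.
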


Lemmas \ref{lemma:conditional-lowerbound} and \ref{lemma:conditional-upperbound} both rely on Taylor's theorem, so we state it here for completeness.
\begin{lemma}[Taylor's theorem; \cite{rudin1976principles}]
    Let $f$
    be a real-valued function that is $n$ times differentiable on the interval $[a,b]$ and suppose $\alpha,\beta\in[a,b]$. Then there exists a point $x$ between $\alpha$ and $\beta$ such that
    $$
    f(\beta)=\sum_{k=0}^{n-1}\frac{f^{(k)}(\alpha)}{k!}(\beta-\alpha)^k+\frac{f^{(n)}(x)}{n!}(\beta-\alpha)^n.
    $$
\end{lemma}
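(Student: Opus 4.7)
The plan is to prove Taylor's theorem by the classical device of defining an auxiliary function and invoking Rolle's theorem. The key idea is to convert the assertion "the remainder has the stated form" into a statement about a vanishing derivative, by introducing a constant $M$ that makes the remainder behave like a polynomial of degree $n$.

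First I would dispose of the trivial case $\alpha=\beta$ and assume henceforth that $\alpha\neq\beta$. Then I would choose the unique real number $M$ such that
$$
f(\beta)=\sum_{k=0}^{n-1}\frac{f^{(k)}(\alpha)}{k!}(\beta-\alpha)^k+M(\beta-\alpha)^n,
$$
which is possible because $(\beta-\alpha)^n\neq 0$. The goal then reduces to exhibiting some $x$ strictly between $\alpha$ and $\beta$ for which $M=f^{(n)}(x)/n!$. To this end I would introduce the auxiliary function
$$
g(t)=f(\beta)-\sum_{k=0}^{n-1}\frac{f^{(k)}(t)}{k!}(\beta-t)^k-M(\beta-t)^n
$$
defined on the closed interval with endpoints $\alpha$ and $\beta$. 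By the very choice of $M$ we have $g(\alpha)=0$, and direct substitution gives $g(\beta)=0$ as well, since $(\beta-t)^k$ vanishes at $t=\beta$ for $k\ge 1$ while the $k=0$ term cancels $f(\beta)$. Since $f$ is $n$ times differentiable on $[a,b]$, the function $g$ is differentiable on the relevant interval, so Rolle's theorem yields a point $x$ strictly between $\alpha$ and $\beta$ at which $g'(x)=0$.

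The technical heart of the argument is the computation of $g'(t)$. Applying the product rule to each summand $\frac{f^{(k)}(t)}{k!}(\beta-t)^k$ produces two contributions, one at order $k$ involving $f^{(k+1)}$ and one at order $k-1$ involving $f^{(k)}$. After reindexing the second sum, consecutive terms cancel in a telescoping fashion, and only the top-order term survives, giving
$$
g'(t)=-\frac{f^{(n)}(t)}{(n-1)!}(\beta-t)^{n-1}+nM(\beta-t)^{n-1}.
$$
Setting $g'(x)=0$ and dividing by $(\beta-x)^{n-1}\neq 0$ (valid because $x$ lies strictly between $\alpha$ and $\beta$) gives $M=f^{(n)}(x)/n!$, which is exactly what the theorem asserts.

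The main obstacle is the telescoping computation of $g'(t)$; everything else is bookkeeping once the auxiliary function is chosen. The cancellation is straightforward but requires care with the indexing boundary terms (the $k=0$ summand has no derivative contribution from the $(\beta-t)^k$ factor, and the reindexed second sum runs from $0$ to $n-2$ while the first runs from $0$ to $n-1$), which is precisely what makes the residual term appear at order $n-1$.
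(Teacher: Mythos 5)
Your proof is correct and is essentially the standard argument from the cited source (Rudin, Theorem 5.15): the paper itself does not prove this lemma but simply imports it by citation. Your auxiliary-function-plus-Rolle device, the choice of $M$, the telescoping derivative computation, and the division by $(\beta-x)^{n-1}$ all match the textbook proof; handling $\alpha=\beta$ separately is the right small touch since Rolle needs distinct endpoints.
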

\subsection{Lower bound}
What is remarkable about Lemma \ref{lemma:conditional-lowerbound} is that it achieves the optimal asymptotic order using a simple uniform random threshold policy. In Lemma \ref{lemma:uniform-lowerbound-maximizers} we identify the optimal instance against this policy using a geometric argument, circumventing difficult first order conditions. 
The uniformity of the policy allows for this argument.
Lemma \ref{lemma:expected-payoff-uniform-point-policy} provides a lower bound on the expected payoff for this optimal instance. Lemma \ref{lemma:conditional-lowerbound} then follows by applying Lemma \ref{lemma:expected-payoff-uniform-point-policy} alongside Taylor's theorem.

Let us start with the crucial result, the optimal instance against a uniformly random policy.
\begin{lemma}
    \label{lemma:uniform-lowerbound-maximizers}
    For a random threshold policy $\tau$ with a threshold uniformly distributed on $(t_1, t_2)$, the tuple $(v_1,\dots,v_n)$ that minimises the payoff, given $v_{\max}$, is
    \begin{itemize}
        \item $v_1=v_{\max}$ and $v_i=0$ for $i>1$ if $v_{\max}<t_1$;
        \item $v_i=t_1+\frac{i}{n}(v_{\max}-t_1)$ if $t_1\le v_{\max}\le t_2+\frac{t_2-t_1}{n-1}$;
        \item $v_i=t_1+\frac{i}{n-1}(t_2-t_1)$ for $i<n$ and $v_n=v_{\max}$ if $v_{\max}> t_2+\frac{t_2-t_1}{n-1}$. 
    \end{itemize}
\end{lemma}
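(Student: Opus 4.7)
My plan is to invoke Lemmas \ref{lemma:increasing-threshold} and \ref{lemma:support-restriction} to restrict to increasing tuples with $v_n = v_{\max}$ and $v_i \in [t_1, t_2]$ for $i < n$. Setting $v_0 := t_1$, the policy then stops at $v_i$ precisely when the uniform threshold $T$ lies in $(v_{i-1}, v_i] \cap (t_1, t_2)$, and yields payoff $0$ otherwise. The case $v_{\max} < t_1$ is immediate: every $v_i \le v_{\max} < t_1 < T$ gives payoff $0$, attained trivially by $(v_{\max}, 0, \ldots, 0)$, as stated.

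For $v_{\max} \ge t_1$ I would express the expected payoff as $(t_2 - t_1)^{-1} \int_{t_1}^{t_2 \wedge v_{\max}} s(T)\, dT$, where $s$ is the staircase stopping-value function, and decompose this integral as the diagonal area $\int T\, dT$ plus the area between $s$ and the line $y = T$. A direct computation shows that each step $(v_{i-1}, v_i] \subseteq [t_1, t_2]$ contributes an isosceles right triangle of area $\tfrac{1}{2}(v_i - v_{i-1})^2$ to this excess, while if $v_{\max} > t_2$ the terminal rectangle of height $v_{\max}$ on $(v_{n-1}, t_2]$ contributes the additional correction $v_{\max}(t_2 - v_{n-1}) - \tfrac{1}{2}(t_2^2 - v_{n-1}^2)$. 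Since the diagonal area depends only on $v_{\max}$ and $t_1, t_2$, minimising the expected payoff reduces to minimising a sum of squared gaps $\sum (v_i - v_{i-1})^2$, together with the convex boundary term in $v_{n-1}$ when $v_{\max} > t_2$.

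The case analysis then follows from Cauchy--Schwarz (equivalently QM--AM). When $v_{\max} \in [t_1, t_2]$ the sum $\sum_{i=1}^n(v_i - v_{i-1})^2$ with $v_0 = t_1$ and $v_n = v_{\max}$ is minimised precisely when all gaps are equal, yielding the second formula $v_i = t_1 + \tfrac{i}{n}(v_{\max} - t_1)$. For $v_{\max} > t_2$, a short first-order computation in $v_{n-1}$ shows that the unconstrained minimiser of the combined objective again sits at the equal-spacing optimum $v_{n-1} = t_1 + \tfrac{n-1}{n}(v_{\max} - t_1)$, which is feasible (i.e.\ $v_{n-1} \le t_2$) iff $v_{\max} \le t_2 + \tfrac{t_2 - t_1}{n-1}$. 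When this constraint binds, I set $v_{n-1} = t_2$, observe that the boundary term is then constant in the remaining variables, and apply Cauchy--Schwarz to $v_1, \ldots, v_{n-2}$ with endpoints $t_1, t_2$ and $n-1$ gaps to obtain the third formula $v_i = t_1 + \tfrac{i}{n-1}(t_2 - t_1)$. The main obstacle I expect is handling the terminal piece correctly when $v_{\max} > t_2$: the interval $(v_{n-1}, v_{\max}]$ is truncated by $t_2$, so the ``sum of squared gaps'' decomposition must be augmented by the boundary term before Cauchy--Schwarz can be applied, and one must check that the interior and boundary optima agree at the transition $v_{\max} = t_2 + \tfrac{t_2 - t_1}{n-1}$ so that the overall minimiser is consistent across the three regimes.
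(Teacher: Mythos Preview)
Your proposal is correct and follows essentially the same approach as the paper: both reduce via Lemmas \ref{lemma:increasing-threshold} and \ref{lemma:support-restriction} to increasing tuples with entries in $[t_1,t_2]$, decompose the payoff into a fixed part plus a sum of triangle areas $\tfrac12(v_i-v_{i-1})^2$, and minimise by equal spacing (the paper phrases this geometrically via a picture, you via Cauchy--Schwarz/QM--AM, but the content is identical). Your constrained optimisation in $v_{n-1}$ for the case $v_{\max}>t_2$, including the feasibility cutoff at $v_{\max}=t_2+\tfrac{t_2-t_1}{n-1}$, also matches the paper's univariate quadratic argument.
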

\begin{proof}
    Denote the CDF of the random threshold by $F_T$. The payoff given a tuple $\boldsymbol{v}$ is
    \begin{align*}
        \boldsymbol{v}_{\tau}&=v_1\cdot\mathbb{P}(T\le v_1)+\sum_{i=2}^nv_i\mathbb{P}(v_{i-1}<T\le v_i)=v_1F_T(v_1)+\sum_{i=2}^nv_i(F_T(v_i)-F_T(v_{i-1})).
    \end{align*}
    If $v_{\max}<t_1$ setting $v_1=v_{\max}$ and $v_i=0$ for $i>1$ results in an expected payoff of zero, so it is optimal. For the other two cases Lemmas \ref{lemma:increasing-threshold} and \ref{lemma:support-restriction} inform us that there is an optimal tuple $\boldsymbol{v}$ which satisfies $t_1\le v_1<v_2<\dots<v_{n-1}\le t_2$. The expected payoff can therefore be visualised as in Figure \ref{fig:uniform-threshold-payoff}, where it is the sum of the shaded and striped area.
    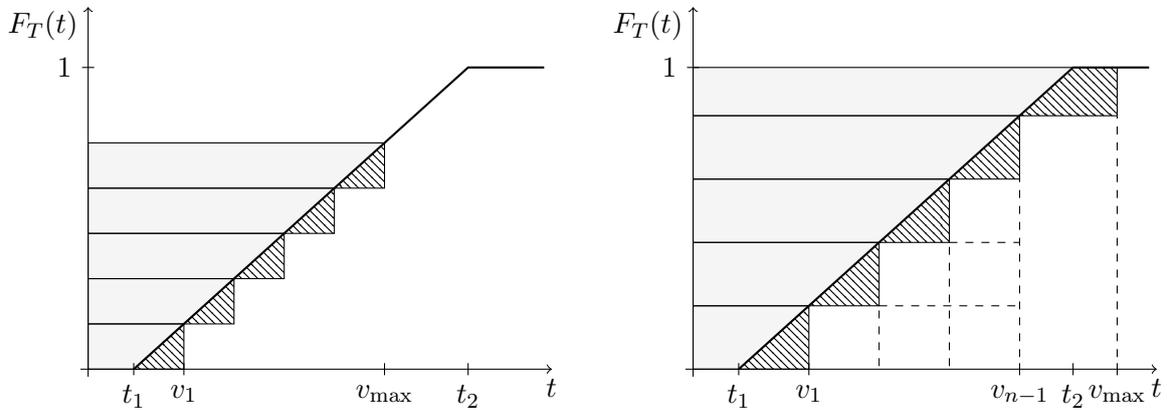
\begin{figure}[!ht]
        \begin{minipage}[b]{0.5\textwidth}
            \centering
            \begin{tikzpicture}[scale = 4]
                \draw[->] (0,-0.025) -- (0,1.2) node[pos=0.95, left] {$F_T(t)$};
                \draw[->] (-0.025,0) -- (1.525,0) node[below] {$t$};
                \draw (0.02,1) -- (-0.02,1) node[left] {$1$};
                \def\qWidth{0.15} 
                \foreach \i in {0, 1, 2, 3, 4} {
                    \fill[pattern=north west lines] (0.15 + 1.1 * \i * \qWidth + 1.1 * \qWidth, \i * \qWidth + \qWidth) -- (0.15 + 1.1 * \i * \qWidth, \i * \qWidth) -- (0.15 + 1.1 * \i * \qWidth + 1.1 * \qWidth, \i * \qWidth) --cycle;
            
                    \fill[gray!50, opacity=0.15] (0,\i * \qWidth) -- (0,\i * \qWidth + \qWidth) -- (0.15 + 1.1 * \i * \qWidth + 1.1 * \qWidth, \i * \qWidth + \qWidth) -- (0.15 + 1.1 * \i * \qWidth, \i * \qWidth) -- cycle;
                
                    \draw[black] (0,\i * \qWidth) -- (0,\i * \qWidth + \qWidth) -- (0.15 + 1.1 * \i * \qWidth + 1.1 * \qWidth, \i * \qWidth + \qWidth) -- (0.15 + 1.1 * \i * \qWidth + 1.1 * \qWidth, \i * \qWidth) -- cycle;
                }    
                \draw[thick] (0.15,0) -- (1.25,1) -- (1.5,1);
                \draw (0.15 + 1.1 * 1 * \qWidth,0.02) -- (0.15 + 1.1 * 1 * \qWidth,-0.02) node[below] {$v_1$};
                \draw (0.15 + 1.1 * 5 *  \qWidth,0.02) -- (0.15 + 1.1 * 5 * \qWidth,-0.02) node[below] {$v_{\max}$};
                \draw (0.15,0.02) -- (0.15,-0.02) node[below] {$t_1$};
                \draw (1.25,0.02) -- (1.25,-0.02) node[below] {$t_2$};
            \end{tikzpicture}
        \end{minipage}%
        \begin{minipage}[b]{0.5\textwidth}
            \centering
            \begin{tikzpicture}[scale = 4]
                \draw[->] (0,-0.025) -- (0,1.2) node[pos=0.95, left] {$F_T(t)$};
                \draw[->] (-0.025,0) -- (1.525,0) node[below] {$t$};
                \draw (0.02,1) -- (-0.02,1) node[left] {$1$};
            
                \def\qWidth{0.21} 
                \foreach \i in {0, 1, 2, 3} {
                    \fill[pattern=north west lines] (0.15 + 1.1 * \i * \qWidth + 1.1 * \qWidth, \i * \qWidth + \qWidth) -- (0.15 + 1.1 * \i * \qWidth, \i * \qWidth) -- (0.15 + 1.1 * \i * \qWidth + 1.1 * \qWidth, \i * \qWidth) --cycle;
            
                    \fill[gray!50, opacity=0.15] (0,\i * \qWidth) -- (0,\i * \qWidth + \qWidth) -- (0.15 + 1.1 * \i * \qWidth + 1.1 * \qWidth, \i * \qWidth + \qWidth) -- (0.15 + 1.1 * \i * \qWidth, \i * \qWidth) -- cycle;
                
                    \draw[black] (0,\i * \qWidth) -- (0,\i * \qWidth + \qWidth) -- (0.15 + 1.1 * \i * \qWidth + 1.1 * \qWidth, \i * \qWidth + \qWidth) -- (0.15 + 1.1 * \i * \qWidth + 1.1 * \qWidth, \i * \qWidth) -- cycle;
                }    
                \def\vMax{1.395} 
                \fill[pattern=north west lines] (1.25, 1) -- (0.15 + 4.4 * \qWidth, 4 * \qWidth) -- (\vMax, 4 * \qWidth) -- (\vMax, 1)-- cycle;
                
                \fill[gray!50, opacity=0.15] (0, 4 * \qWidth) -- (0, 1) -- (1.25, 1) -- (0.15 + 4.4 * \qWidth, 4 * \qWidth) -- cycle;
                
                \draw[black] (0, 4 * \qWidth) -- (0,1) -- (\vMax, 1) -- (\vMax, 4 * \qWidth) -- cycle;
                \draw[thick] (0.15,0) -- (1.25,1) -- (1.5,1);
                
                \draw[dashed] (0.15 + 1.1 * 2 * \qWidth, 0.01) -- (0.15 + 1.1 * 2 * \qWidth, 1 * \qWidth);
                \draw[dashed] (0.15 + 1.1 * 3 * \qWidth, 0.01) -- (0.15 + 1.1 * 3 * \qWidth, 2 * \qWidth);
                \draw[dashed] (0.15 + 1.1 * 4 * \qWidth, 0.04) -- (0.15 + 1.1 * 4 * \qWidth, 3 * \qWidth);
                \draw[dashed] (\vMax, 0.04) -- (\vMax, 4 * \qWidth);
                \draw[dashed] (0.15 + 1.1 * 2 * \qWidth, 1 * \qWidth) -- (0.15 + 1.1 * 4 * \qWidth, 1 * \qWidth);
                \draw[dashed] (0.15 + 1.1 * 3 * \qWidth, 2 * \qWidth) -- (0.15 + 1.1 * 4 * \qWidth, 2 * \qWidth);
                \draw (0.15 + 1.1 * 1 * \qWidth,0.02) -- (0.15 + 1.1 * 1 * \qWidth,-0.02) node[below] {$v_1$};
                \draw (0.15 + 1.1 * 4 * \qWidth,0.02) -- (0.15 + 1.1 * 4 * \qWidth,-0.02) node[below] {$v_{n-1}$};
                \draw (\vMax,0.02) -- (\vMax,-0.02) node[below] {$v_{\max}$};
                \draw (0.15,0.02) -- (0.15,-0.02) node[below] {$t_1$};
                \draw (1.25,0.02) -- (1.25,-0.02) node[below] {$t_2$};
            \end{tikzpicture}
        \end{minipage}
        \caption{Payoff visualisation for $v_{\max}\le t_2$ (left) and $v_{\max}>t_2$ (right) with $n=5$.}
        \label{fig:uniform-threshold-payoff}
    \end{figure}
    
    Observe that for a given $v_{\max}$ the $v_i$'s only influence the striped area. If $v_{\max}\le t_2$ the total area of these triangles is minimised if the $v_i$'s for $i<n$ are equally spaced between $t_1$ and $v_{\max}$, that is, $v_i=t_1+\frac{i}{n}(v_{\max}-t_1)$. 

    If $v_{\max}> t_2$ the striped area is one quadrilateral and $n-1$ triangles. Given $v_{n-1}$ the area of the triangles is minimised if the other $v_i$'s have an equal spacing, that is, $v_i=t_1+\frac{i}{n-1}(v_{n-1}-t_1)$ for $i<n$. The total area can therefore be expressed as
    $$
    \underbrace{1\cdot v_{\max} }_{\mathclap{\text{bounding box}}}-
    \underbrace{F_{T}\left(v_{n-1}\right)\left(v_{\max}-v_{n-1}\right)}_{\mathclap{\text{area under quadrilateral}}}-
    \underbrace{\binom{n-1}{2}\frac{F_{T}\left(v_{n-1}\right)}{n-1}\frac{v_{n-1}-t_{1}}{n-1}}_{\mathclap{\text{area of the }\binom{n-1}{2}\text{ rectangles under the triangles}}},
    $$
    where $\frac{F_{T}\left(v_{n-1}\right)}{n-1}$ and $\frac{v_{n-1}-t_{1}}{n-1}$ are the height and width of the rectangles under the triangles.
    Minimising the total area is therefore a univariate quadratic problem in $v_{n-1}$ with the constraint $v_{n-1}\le t_2$.
    
    It can be verified that the minimiser of this constrained quadratic is $v_{n-1}=\min\{t_2,t_1+\frac{n-1}{n}(v_{\max}-t_1)\}$. Plugging this in yields the desired result for the last two cases.
\end{proof}
We now compute and give a bound on the expected payoff.

\begin{lemma}
    \label{lemma:expected-payoff-uniform-point-policy}
    For a random threshold policy $\tau$ with a threshold uniformly distributed on $(t_1, t_2)$
    \begin{align*}
        \min_{\mathclap{\boldsymbol{X}\in\mathcal{I}(F_{\max},n)}}\mathbb{E}[\boldsymbol{X}_{\tau}]
        &=\mathbb{E}[f_1(X_{\max})\indicator{t_1 \le X_{\max}\le t_2}]+\mathbb{E}\left[f_2(X_{\max})\indicator{t_2 < X_{\max}\le t_2+\frac{t_2-t_1}{n-1}}\right]\\
        &+f_2\left(t_2+\frac{t_2-t_1}{n-1}\right)\mathbb{P}\left(X_{\max}> t_2+\frac{t_2-t_1}{n-1}\right)\\
        &\ge\mathbb{E}[f_1(X_{\max})\indicator{t_1 \le X_{\max}\le t_2}]+f_2(t_2)\mathbb{P}\left(X_{\max}>t_2\right),
    \end{align*}
    where
    \begin{align*}
        f_1(s)=\frac{s-t_1}{t_2-t_1}\left(t_1+\frac{n+1}{2n}(s-t_1)\right)\text{ and }
        f_2(s)=s-\frac{n-1}{2n}\frac{(s-t_1)^2}{t_2-t_1}.
    \end{align*}
\end{lemma}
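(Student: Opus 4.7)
The plan is to combine Lemma \ref{lemma:uniform-lowerbound-maximizers}, which pins down the worst-case sequence $(v_1,\ldots,v_n)$ for every realisation of $v_{\max}$, with the identity
$$
\boldsymbol{v}_\tau = v_1 F_T(v_1) + \sum_{i=2}^n v_i\bigl(F_T(v_i) - F_T(v_{i-1})\bigr)
$$
already derived in the proof of that lemma, where $T\sim U(t_1,t_2)$. Since $F_T$ is affine on $[t_1,t_2]$ and equal to $1$ above $t_2$, evaluating this conditional payoff reduces to elementary algebra in each of the three regimes identified by Lemma \ref{lemma:uniform-lowerbound-maximizers}; integrating against $F_{\max}$ then produces the claimed closed-form equality.

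Concretely, when $v_{\max}<t_1$ all $v_i$ lie below the support of $T$ and the conditional payoff vanishes. When $v_{\max}\in[t_1,t_2]$, Lemma \ref{lemma:uniform-lowerbound-maximizers} prescribes $v_i=t_1+\tfrac{i}{n}(v_{\max}-t_1)$, so every increment $F_T(v_i)-F_T(v_{i-1})$ equals $\tfrac{v_{\max}-t_1}{n(t_2-t_1)}$; summing the resulting arithmetic progression recovers exactly $f_1(v_{\max})$. When $v_{\max}\in(t_2,\,t_2+\tfrac{t_2-t_1}{n-1}]$ the same prescription applies, but now $v_n=v_{\max}>t_2$ while $v_{n-1}\le t_2$, so the last summand uses $F_T(v_n)=1$; isolating the term at $i=n$ and simplifying the telescoping remainder yields $f_2(v_{\max})$. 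Finally, when $v_{\max}>t_2+\tfrac{t_2-t_1}{n-1}$, Lemma \ref{lemma:uniform-lowerbound-maximizers} gives $v_{n-1}=t_2$ almost surely, so $T$ is accepted at some $i\le n-1$ and the payoff is the deterministic constant $\tfrac{1}{n-1}\sum_{i=1}^{n-1}v_i = t_1+\tfrac{n(t_2-t_1)}{2(n-1)}$, which a short check shows equals $f_2(t_2+\tfrac{t_2-t_1}{n-1})$, matching $f_2$ by continuity at the boundary.

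For the lower bound, the key observation is that
$$
f_2'(x)=1-\frac{n-1}{n}\cdot\frac{x-t_1}{t_2-t_1}
$$
is nonnegative throughout $[t_2,\,t_2+\tfrac{t_2-t_1}{n-1}]$ and vanishes at the right endpoint. Hence $f_2(X_{\max})\ge f_2(t_2)$ on the middle interval and $f_2(t_2+\tfrac{t_2-t_1}{n-1})\ge f_2(t_2)$, so combining the second and third terms of the equality gives a contribution of at least $f_2(t_2)\,\mathbb{P}(X_{\max}>t_2)$, which is precisely what the inequality requires.

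Most of the argument is routine bookkeeping; the one place where care is warranted is verifying that the two different formulas for the conditional payoff in the middle and final regimes agree at the transition point $v_{\max}=t_2+\tfrac{t_2-t_1}{n-1}$, so that the splitting of the expectation into the three displayed terms is unambiguous. I expect this to follow from the same telescoping identity used to derive each formula in isolation.
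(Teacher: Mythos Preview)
Your proposal is correct and follows essentially the same approach as the paper: invoke Lemma~\ref{lemma:uniform-lowerbound-maximizers} to fix the worst-case tuple in each regime of $v_{\max}$, compute the conditional payoff $\boldsymbol{v}^*_\tau(v_{\max})$ in each case to obtain $f_1$ and $f_2$, aggregate against $F_{\max}$, and then use monotonicity of $f_2$ on $[t_2,\,t_2+\tfrac{t_2-t_1}{n-1}]$ for the lower bound. The only cosmetic difference is that the paper reads off the conditional payoffs from the area picture of Figure~\ref{fig:uniform-threshold-payoff} (bounding box minus striped triangles), whereas you carry out the equivalent direct summation $\sum_i v_i(F_T(v_i)-F_T(v_{i-1}))$; the resulting formulas and the monotonicity step are identical.
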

\begin{proof}
    Denote the optimal $v_i$'s from Lemma \ref{lemma:uniform-lowerbound-maximizers}, given $v_{\max}$, by $\boldsymbol{v}^*(v_{\max})$. 
    If $v_{\max}<t_1$ then, as noted before, $\boldsymbol{v}_{\tau}^*(v_{\max})=0$. If $v_{\max}\in[t_1,t_2]$ then the payoff can be determined using the left visual of Figure \ref{fig:uniform-threshold-payoff}.
    $$
    \boldsymbol{v}_{\tau}^*(v_{\max})=\frac{v_{\max}-t_1}{t_2-t_1}\left(v_{\max}-\frac{n(n-1)}{2}\frac{1}{n}\frac{v_{\max}-t_1}{n}\right)=f_1(v_{\max}).
    $$
    If $v_{\max}\in(t_2,t_2+\frac{t_2-t_1}{n-1}]$ then the payoff can be determined using the right visual of Figure \ref{fig:uniform-threshold-payoff}.
    $$
    \boldsymbol{v}_{\tau}^*(v_{\max})=v_{\max}-\frac{v_{\max}-t_1}{t_2-t_1}\frac{n(n-1)}{2}\frac{1}{n}\frac{v_{\max}-t_1}{n}=f_2(v_{\max}).
    $$
    Lastly, if $v_{\max}>t_2+\frac{t_2-t_1}{n-1}$ 
    then $\boldsymbol{v}_{\tau}^*(v_{\max})=\boldsymbol{v}_{\tau}^*(t_2)$, since then $v_{n-1}=t_2$, so one of the first $n-1$ values is accepted.
    Taking the expectation yields the exact result. 
    
    The lower bound result follows from the fact that $f_2$ is an increasing function (higher $v_{\max}$ means a higher payoff). Therefore, 
    \begin{align*}
        &\mathbb{E}\left[f_2(X_{\max})\indicator{t_2 \le X_{\max}< t_2+\frac{t_2-t_1}{n-1}}\right]+\:f_2\left(t_2+\frac{t_2-t_1}{n-1}\right)\mathbb{P}\left(X_{\max}\ge t_2+\frac{t_2-t_1}{n-1}\right)\\
        \ge&\: \mathbb{E}\left[f_2(t_2)\indicator{t_2 \le X_{\max}< t_2+\frac{t_2-t_1}{n-1}}\right]
        +\:f_2\left(t_2\right)\mathbb{P}\left(X_{\max}\ge t_2+\frac{t_2-t_1}{n-1}\right)\\
        =\:&f_2(t_2)\mathbb{P}\left(X_{\max}\ge t_2\right). \qedhere
    \end{align*}
\end{proof}

We obtain an informative lower bound by using an appropriate Taylor series to the above result.
\begin{cleverproof}{lemma:conditional-lowerbound}
    Consider a random threshold that is uniform on $(p^*-\epsilon,p^*+\epsilon)$. Then the functions $f_i$ as defined in Lemma \ref{lemma:expected-payoff-uniform-point-policy} simplify to
    \begin{align*}
        f_1(p^*+s)&=\frac{s+\epsilon}{2\epsilon}\left(p^*-\epsilon+\frac{n+1}{2n}(s+\epsilon)\right),\\
        &=\underbrace{\frac{1}{2}p^*-\frac{1}{2}\epsilon+\frac{n+1}{4n}\epsilon}_{a(\epsilon)}+\underbrace{\left(\frac{p^*}{2\epsilon}-\frac{1}{2}+\frac{n+1}{2n}\right)}_{b(\epsilon)}s+\underbrace{\frac{n+1}{4n\epsilon}}_{c(\epsilon)}s^2\\
        f_2(p^*+\epsilon)&=p^*+\frac{\epsilon}{n}.
    \end{align*}
    Moreover, Lemma \ref{lemma:expected-payoff-uniform-point-policy} informs us that 
    \begin{align}
        V^*(F_{\max}, n)&\ge\mathbb{E}[f_1(X_{\max})\indicator{p^*-\epsilon\le X_{\max}\le p^*+\epsilon}]+f_2(p^*+\epsilon)\mathbb{P}\left(X_{\max}> p^*+\epsilon\right) \notag \\
        &=\int_{-\epsilon}^\epsilon (a(\epsilon)+b(\epsilon)s+c(\epsilon)s^2)F_{\max}'(p^*+s)\mathrm{d}s+\left(p^*+\frac{\epsilon}{n}\right)\left(1-F_{\max}(p^*+\epsilon)\right) \label{eq:lowerbound-terms}.
    \end{align}
    Now, $F'''_{\max}(p^*+s)$ exists for $|s|\le \epsilon$, because it is bounded, 
    so by Taylor's Theorem
    $$F'_{\max}(p^*+s)=F'_{\max}(p^*)+F''_{\max}(p^*)s+\frac{1}{2}F'''_{\max}(p^*+\xi(s))s^2,$$
    where $\xi(s)\in(0,s)$. Let $M=\sup\{|F'''_{\max}(p^*+s)|:|s|\le\epsilon\}$ and notice that $M\ge\sup\{|F'''_{\max}(p^*+\xi(s))|:|s|\le\epsilon\}$ and that $M$ is bounded.    
    Substituting the Taylor expansion into the first term of (\ref{eq:lowerbound-terms}) yields
    \begin{align*}
        &\int_{-\epsilon}^\epsilon (a(\epsilon)+b(\epsilon)s+c(\epsilon)s^2)F_{\max}'(p^*+s)\mathrm{d}s\\
        =&\int_{-\epsilon}^\epsilon (a(\epsilon)+b(\epsilon)s+c(\epsilon)s^2)(F'_{\max}(p^*)+F''_{\max}(p^*)s)\mathrm{d}s\\
        +&\int_{-\epsilon}^\epsilon (a(\epsilon)+b(\epsilon)s+c(\epsilon)s^2)\left(\frac{1}{2}F'''_{\max}(p^*+\xi(s))s^2\right)\mathrm{d}s.
    \end{align*}
    Integration reveals that the first term is equal to
    \begin{align}
        &2a(\epsilon)F'_{\max}(p^*)\epsilon+\frac{2}{3}\left(c(\epsilon)F'_{\max}(p^*)+b(\epsilon)F''_{\max}(p^*)\right)\epsilon^3 \notag \\
        =&p^*F'_{\max}(p^*)\epsilon+\left(\frac{2-n}{3n}F'_{\max}(p^*)+\frac{1}{3}p^*F''_{\max}(p^*)\right)\epsilon^2+O(\epsilon^3) \label{eq:lowerbound-terms-a}.
    \end{align}
    The absolute value of the second term can be bounded as follows
    \begin{align*}
        &\left|\int_{-\epsilon}^\epsilon (a(\epsilon)+b(\epsilon)s+c(\epsilon)s^2)\left(\frac{1}{2}F'''_{\max}(p^*+\xi(s))s^2\right)\mathrm{d}s\right|\\
        \le&\int_{-\epsilon}^\epsilon (|a(\epsilon)|+|b(\epsilon)|\cdot |s|+|c(\epsilon)|s^2)\left|\frac{1}{2}F'''_{\max}(p^*+\xi(s))\right|s^2\mathrm{d}s\\
        \le&\frac{M}{2}\int_{-\epsilon}^\epsilon (|a(\epsilon)|+|b(\epsilon)|\cdot |s|+|c(\epsilon)|s^2)s^2\mathrm{d}s\\
        =&M\left(\frac{1}{3}|a(\epsilon)|\epsilon^3+\frac{1}{4}|b(\epsilon)|\epsilon^4+\frac{1}{5}|c(\epsilon)|\epsilon^5\right).
    \end{align*}
    The above is $O(\epsilon^3)$, since $\epsilon<1$, so it gets absorbed in the previous $O(\epsilon^3)$ term. 
    Substituting the Taylor expansion into the second term of (\ref{eq:lowerbound-terms}) yields
    \begin{align}
        &\left(p^*+\frac{\epsilon}{n}\right)\left(1-F_{\max}(p^*+\epsilon)\right)\notag\\
        =&\left(p^*+\frac{\epsilon}{n}\right)\left(1-F_{\max}(p^*)- F_{\max}'(p^*)\epsilon-\frac{1}{2}F_{\max}''(p^*)\epsilon^2+O(\epsilon^3)\right)\notag\\
        =&\Pi^*(F_{\max})-p^*(F_{\max}'(p^*)\epsilon+\frac{1}{2}F''_{\max}(p^*)\epsilon^2)+\frac{\epsilon}{n}(1-F_{\max}(p^*)-F_{\max}'(p^*)\epsilon)+O(\epsilon^3) \label{eq:lowerbound-terms-b}.
    \end{align}
    The sum of (\ref{eq:lowerbound-terms-a})
and (\ref{eq:lowerbound-terms-b}) gives the asymptotics of (\ref{eq:lowerbound-terms}),
    $$
    \Pi^*(F_{\max})+\frac{1-F_{\max}(p^*)}{n}\epsilon-\frac{1}{6}\left(\frac{2n+2}{n}F_{\max}'(p^*)+p^*F_{\max}''(p^*)\right)\epsilon^2+O(\epsilon^3).
    $$
    The above expression has a maximum at $\epsilon^*=\frac{3(1-F_{\max}(p^*))}{(2F_{\max}'(p^*)+pF_{\max}''(p^*))n+2F_{\max}'(p^*)}$, but for simplicity we pick our $\epsilon=\frac{3(1-F_{\max}(p^*))}{2F_{\max}'(p^*)+pF_{\max}''(p^*)}\frac{1}{n}$. Substituting this $\epsilon$ into the above expression gives the result of the theorem. 
\end{cleverproof}
\subsection{Upper bound}
Lemma \ref{lemma:conditional-upperbound} relies on the same instance as Theorem \ref{thm:randomised-bound-continuous-xmax} and with it we show in Lemma \ref{lemma:w_(i+1)*P(X>=w_i)-bound} that for a smart choice of $w_{n-1}$ that $V^*(F_{\max},n)\le \max_{0\le i\le n-2}\{w_{i+1}\mathbb{P}(X_{\max}\ge w_i)\}$. Iteratively setting $w_{i+1}$ such that $w_{i+1}\mathbb{P}(X_{\max}\ge w_i)=\Pi^*(F_{\max})+\epsilon$ until $w_{i+1}$ surpasses $w_{n-1}$ ensures that $V^*(F_{\max},n)\le\Pi^*(F_{\max})+\epsilon$. Lemma \ref{lemma:conditional-upperbound} informs us which $\epsilon$ is possible for a given $n$. The technical arguments of Lemma \ref{lemma:conditional-upperbound} are in Appendix \ref{proofs:technical-lemmas}. 

Let us first derive a general upper bound from Theorem \ref{thm:randomised-bound-continuous-xmax}.
\begin{lemma}
\label{lemma:w_(i+1)*P(X>=w_i)-bound}
    Let $w_0$ and $w_n$ denote the lower and upper bound of the support of $X_{\max}$ then for any sequence $w_0<w_1<\dots<w_n$,
    $$V^*(F_{\max}, n)\le\max\left\{w_1\mathbb{P}(X_{\max}\ge w_0),\dots,w_{n-1}\mathbb{P}(X_{\max}\ge w_{n-2}),\int_{w_{n-1}}^\infty t\mathrm{d}F_{\max}(t)\right\}.$$
\end{lemma}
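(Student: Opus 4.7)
The plan is to recycle the adversarial construction from the proof of \Cref{thm:randomised-bound-continuous-xmax}, but instead of optimising the partition via the intermediate value theorem, I would retain it as a free parameter and track the bound it produces. Fix an arbitrary randomised stopping rule $\tau$ and a partition $w_0<w_1<\dots<w_n$. Recursively define
$$
b_i=\mathrm{argmax}_{x\in[w_{i-1},w_i]}\,r_i(b_1,\dots,b_{i-1},x),
$$
and let $(v_1(t),\dots,v_n(t))$ be the nature instance from \Cref{thm:randomised-bound-continuous-xmax}, where $t\sim F_{\max}$. Running the same chain of equalities in that proof (decomposition over intervals $[w_{j-1},w_j]$, then using $\mathbb{P}(\tau=i\mid b_1,\dots,b_{i-1},t)\le\mathbb{P}(\tau=i\mid b_1,\dots,b_i)$ on $[w_{i-1},w_i]$) yields
$$
\mathbb{E}[\boldsymbol{X}_\tau]\le\sum_{i=1}^n p_i\int_{w_{i-1}}^{w_i} t\,\mathrm{d}F_{\max}(t)+\sum_{i=1}^{n-1}p_i\cdot b_i\mathbb{P}(X_{\max}\ge w_i),
$$
where $p_i:=\mathbb{P}(\tau=i\mid b_1,\dots,b_i)$.

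Next, I would regroup the two sums by the common factor $p_i$. For $i=1,\dots,n-1$, using $t\le w_i$ on $[w_{i-1},w_i]$ and $b_i\le w_i$, the combined piece is bounded by
$$
p_i\Big[w_i\mathbb{P}(w_{i-1}\le X_{\max}<w_i)+w_i\mathbb{P}(X_{\max}\ge w_i)\Big]=p_i\cdot w_i\mathbb{P}(X_{\max}\ge w_{i-1}),
$$
where the equality is the telescoping identity $\mathbb{P}(w_{i-1}\le X_{\max}<w_i)+\mathbb{P}(X_{\max}\ge w_i)=\mathbb{P}(X_{\max}\ge w_{i-1})$. For $i=n$, since $w_n$ is the upper bound of the support, $\int_{w_{n-1}}^{w_n}t\,\mathrm{d}F_{\max}(t)=\int_{w_{n-1}}^\infty t\,\mathrm{d}F_{\max}(t)$, so this piece is simply $p_n\int_{w_{n-1}}^\infty t\,\mathrm{d}F_{\max}(t)$.

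To finish, I would observe that $\sum_{i=1}^n p_i=1-\prod_{j=1}^n(1-r_j(b_1,\dots,b_j))\le 1$, so the result of the previous step is a subconvex combination of the $n$ quantities
$$
w_1\mathbb{P}(X_{\max}\ge w_0),\;\dots,\;w_{n-1}\mathbb{P}(X_{\max}\ge w_{n-2}),\;\int_{w_{n-1}}^\infty t\,\mathrm{d}F_{\max}(t),
$$
hence is at most their maximum. Since $\tau$ was arbitrary and nature's choice depended only on $\tau$, the same bound holds for $V^*(F_{\max},n)$. The proof is mostly bookkeeping on top of the construction of \Cref{thm:randomised-bound-continuous-xmax}; the only subtle point is the merging of the two $w_i$-terms into $w_i\mathbb{P}(X_{\max}\ge w_{i-1})$, which is what makes the final bound so clean, and nowhere in the argument is continuity of $F_{\max}$ used (endpoint conventions at atoms of $F_{\max}$ are immaterial because all the inequalities use closed/open half-lines consistently).
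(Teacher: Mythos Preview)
Your proposal is correct and follows essentially the same approach as the paper: both start from the intermediate bound in the proof of \Cref{thm:randomised-bound-continuous-xmax}, regroup the two sums by the common weight $p_i=\mathbb{P}(\tau=i\mid b_1,\dots,b_i)$, use $t\le w_i$ and $b_i\le w_i$ to merge the $i$th terms into $w_i\mathbb{P}(X_{\max}\ge w_{i-1})$, and then bound the resulting subconvex combination by its maximum entry.
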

\begin{proof}
    Let $b_i$ be defined as in Theorem \ref{thm:randomised-bound-continuous-xmax}. An intermediary result of Theorem \ref{thm:randomised-bound-continuous-xmax} is that the expected payoff for a stopping rule $\tau$ has an upper bound of
    \begin{align*}
        \sum_{i=1}^n\mathbb{P}(\tau=i | b_1,\dots,b_i) \int_{w_{i-1}}^{w_i}t\mathrm{d}F_{\max}(t)+\sum_{i=1}^{n-1}\mathbb{P}(\tau=i | b_1,\dots,b_i)\cdot b_i\mathbb{P}(X_{\max}\ge w_i).
    \end{align*}
    Bounding the above in terms independent of $\tau$ yields an upper bound on $V^*(F_{\max}, n)$.
    As shorthand let $p_i=\mathbb{P}(\tau=i | b_1,\dots,b_i)$, then 
    \begin{align*}
        &\sum_{i=1}^np_i \int_{w_{i-1}}^{w_i}t\mathrm{d}F_{\max}(t)+\sum_{i=1}^{n-1}p_ib_i\mathbb{P}(X_{\max}\ge w_i)\\
        =&\,p_n\int_{w_{n-1}}^\infty t\mathrm{d}F_{\max}(t)+\sum_{i=1}^{n-1}p_i\left( \int_{w_{i-1}}^{w_i}t\mathrm{d}F_{\max}(t)+b_i\mathbb{P}(X_{\max}\ge w_i)\right)\\
        \le &\,p_n\int_{w_{n-1}}^\infty t\mathrm{d}F_{\max}(t)+\sum_{i=1}^{n-1}p_i\left( \int_{w_{i-1}}^{w_i}w_i\mathrm{d}F_{\max}(t)+w_i\mathbb{P}(X_{\max}\ge w_i)\right)\\
        =&\,p_n\int_{w_{n-1}}^\infty t\mathrm{d}F_{\max}(t)+\sum_{i=1}^{n-1}p_iw_i\mathbb{P}(X_{\max}\ge w_{i-1})\\
        \le& \,\max\left\{w_1\mathbb{P}(X_{\max}\ge w_0),\dots,w_{n-1}\mathbb{P}(X_{\max}\ge w_{n-2}),\int_{w_{n-1}}^\infty t\mathrm{d}F_{\max}(t)\right\}. \qedhere
    \end{align*}
\end{proof}

To prove the theorem we use the procedure laid out. It is a simple concept, but to turn it into an upper bound requires non-standard calculus arguments. 
\begin{cleverproof}{lemma:conditional-upperbound}
    Let $\Pi^*=\Pi^*(F_{\max})$.
    Define $\beta=\inf\{s:\int_s^\infty t\mathrm{d}F_{\max}(t)\le \Pi^*\}$. $\beta$ is finite and by choosing $w_{n-1}$ equal to $\beta$, Lemma \ref{lemma:w_(i+1)*P(X>=w_i)-bound} reduces to $V^*(F_{\max}, n)\le U(\boldsymbol{w}):=\max_{0\le i\le n-2}\{w_{i+1}\mathbb{P}(X_{\max}\ge w_i)\}$, since the upper bound is at least $\Pi^*$.

    We are interested in determining the smallest $\epsilon$ for which $U(\boldsymbol{w})\le \Pi^*+\epsilon$ for a fixed $n$. To do this we consider the inverse problem where $\epsilon$ is given and we want to determine the smallest $n$ such that $U(\boldsymbol{w})\le \Pi^*+\epsilon$.
    Consider iteratively setting 
    $$w_{i+1}=M(w_i;\epsilon):=\frac{\Pi^*+\epsilon}{\mathbb{P}(X_{\max}\ge w_i)}$$ 
    until a $w_{i+1}$ surpasses $\beta$, then by construction $U(\boldsymbol{w})= \Pi^*+\epsilon$.
    Define $f(t;\epsilon)=M(t;\epsilon)-t$, $M^k=M\circ M^{k-1}$ and $k(a,b;\epsilon)=\inf\{k:M^k(a;\epsilon)\ge b\}$.
    The quantity of interest is $k(0,\beta;\epsilon)$ by assuming without loss of generality that $w_0 = 0$. 

    Firstly, $k(0,\beta;\epsilon)$ is bounded and $O(\epsilon^{-1})$. 
    To see this, define $m(\epsilon)=\inf_{t\in[0,\beta]}\{f(t;\epsilon)\}=f(t^*(\epsilon);\epsilon)$, by the envelope theorem 
    $m'(\epsilon)=\frac{\partial}{\partial\epsilon}f(t;\epsilon)\big|_{t=t^*(\epsilon)}$, so $m'(0)=1/\mathbb{P}(X_{\max}\ge p^*)$, therefore by Taylor's theorem $m(\epsilon)=m(0)+m'(0)\epsilon+O(\epsilon^2)=\epsilon / \mathbb{P}(X_{\max}\ge p^*)+O(\epsilon^2)=\Theta(\epsilon)$. By construction, $\min_{0\le i\le n-2}\{w_{i+1}-w_i\}\ge m(\epsilon)$, so $k(0,\beta;\epsilon)\le\lceil\frac{\beta}{m(\epsilon)}\rceil=O(\epsilon^{-1})$.
    
    Secondly, observe that for $\epsilon'<\epsilon$ that $k(a,b;\epsilon') \ge k(a,b;\epsilon)$, lastly we have a triangle inequality $k(a,c;\epsilon)\le k(a,b;\epsilon) + k(b,c;\epsilon)$. We can therefore upper bound $k(0,\beta;\epsilon)$ by
    \begin{align}
        k(0,p^*-\delta;0)+k(p^*-\delta,p^*+\delta;\epsilon)+k(p^*+\delta,\beta;0),
        \label{eq:three-terms}
    \end{align}
    where $\delta>0$ will be chosen to have several nice properties. This is encapsulated in Lemma \ref{lemma:delta-choice} below.
    \begin{lemma}
        \label{lemma:delta-choice}
        Let $M(t;\epsilon):=\frac{\Pi^*+\epsilon}{\mathbb{P}(X_{\max}\ge t)}$, $f(t;\epsilon)=M(t;\epsilon)-t$ and $\bar{p}(\epsilon)$ the minimiser of $f$. 
        If $F_{\max}$ is continuously differentiable at $p^*$, then there exist $\epsilon_0'>0$ and $\delta>0$ (independent of $\epsilon$) such that for $\epsilon<\epsilon_0'$ the following holds: $M$ is increasing and continuous on $(p^*-\delta,p^*+\delta)$, and $f$ is non-negative, decreasing on $(p^*-\delta,\bar{p}(\epsilon)]$ and increasing on $[\bar{p}(\epsilon),p^*+\delta)$
    \end{lemma}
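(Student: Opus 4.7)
The proof will verify the three claims in turn, leveraging the higher regularity of $F_{\max}$ inherited from the parent Lemma~\ref{lemma:conditional-upperbound} (three times continuously differentiable at $p^*$). First, the first-order condition $\Pi'(p^*)=0$ rearranges to $1-F_{\max}(p^*) = p^* F'_{\max}(p^*)$. Since $\Pi^* = p^*(1-F_{\max}(p^*)) > 0$, both factors are positive, so $F'_{\max}(p^*)>0$. By continuity of $F'_{\max}$ at $p^*$ there is a $\delta_1>0$ on which $F'_{\max}>0$, hence $t\mapsto 1-F_{\max}(t)$ is strictly decreasing and positive on $(p^*-\delta_1, p^*+\delta_1)$; dividing the positive constant $\Pi^*+\epsilon$ by this function makes $M(\cdot;\epsilon)$ continuous and strictly increasing there for every $\epsilon\ge 0$. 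Non-negativity of $f$ is immediate from the definition of $p^*$: for any $t$, $t(1-F_{\max}(t))=\Pi(t)\le \Pi^*\le \Pi^*+\epsilon$, which rearranges to $M(t;\epsilon)\ge t$.

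The crux is the unimodal behaviour. Differentiating twice in $t$ gives
$$\frac{\partial^2 f}{\partial t^2}(t;\epsilon) = (\Pi^*+\epsilon)\,\frac{F''_{\max}(t)(1-F_{\max}(t))+2F'_{\max}(t)^2}{(1-F_{\max}(t))^3}.$$
Substituting $(t,\epsilon)=(p^*,0)$ and using $1-F_{\max}(p^*)=p^* F'_{\max}(p^*)$, the numerator factors as $F'_{\max}(p^*)\bigl(2F'_{\max}(p^*)+p^* F''_{\max}(p^*)\bigr)$. Because $p^*$ is the unique interior maximiser of $\Pi$ and $\Pi''(p) = -2F'_{\max}(p) - p F''_{\max}(p)$, the strict second-order condition $\Pi''(p^*)<0$ is equivalent to $2F'_{\max}(p^*)+p^* F''_{\max}(p^*)>0$, which combined with $F'_{\max}(p^*)>0$ gives $\partial^2 f/\partial t^2(p^*;0)>0$. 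Continuity of $\partial^2 f/\partial t^2$ in both arguments then yields $\delta_2, \epsilon_0' > 0$ such that $\partial^2 f/\partial t^2(t;\epsilon)>0$ for all $(t,\epsilon)\in (p^*-\delta_2, p^*+\delta_2) \times [0,\epsilon_0')$.

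Set $\delta=\min\{\delta_1,\delta_2\}$. Strict convexity of $f(\cdot;\epsilon)$ on $(p^*-\delta, p^*+\delta)$ forces any interior critical point to be the unique minimum. The implicit function theorem applied to $\partial f/\partial t = 0$ at $(p^*,0)$, where the second-order partial $\partial^2 f/\partial t^2(p^*,0)\ne 0$ supplies invertibility, shows that the minimiser $\bar{p}(\epsilon)$ depends continuously on $\epsilon$ with $\bar{p}(0)=p^*$; shrinking $\epsilon_0'$ ensures $\bar{p}(\epsilon)\in (p^*-\delta, p^*+\delta)$ for $\epsilon<\epsilon_0'$, and strict convexity delivers the monotone decrease on $(p^*-\delta,\bar{p}(\epsilon)]$ and the monotone increase on $[\bar{p}(\epsilon), p^*+\delta)$. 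The main subtlety I anticipate is that the lemma statement only invokes continuous differentiability of $F_{\max}$, whereas the strict convexity argument above uses $F''_{\max}$; I would appeal to the stronger regularity of the surrounding result \Cref{lemma:conditional-upperbound} to justify using the second derivative, since without it, a unique local minimum of $f(\cdot;0)$ does not by itself yield unimodality on a neighbourhood.
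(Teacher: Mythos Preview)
Your argument is correct and in fact cleaner than the paper's. The paper does not compute $\partial^2 f/\partial t^2$; instead it argues that since $\bar p(\epsilon)$ is a minimiser of a differentiable function, there is a maximal interval $(l(\epsilon),r(\epsilon))\subset(p^*-\delta_0,p^*+\delta_0)$ on which $f(\cdot;\epsilon)$ is decreasing to the left of $\bar p(\epsilon)$ and increasing to the right. It then asserts that $l$ and $r$ are continuous in $\epsilon$, so $\bigcap_{\epsilon<\epsilon_0'}(l(\epsilon),r(\epsilon))$ contains $p^*$ in its interior for $\epsilon_0'$ small enough, and takes $\delta$ to fit inside this intersection. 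That route tries to stay at the level of first-order information, matching the lemma's stated hypothesis, but the existence of a \emph{non-degenerate} monotone interval around a minimiser, and the continuity of its endpoints in $\epsilon$, are not consequences of mere differentiability and are left unjustified.

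Your approach trades those topological assertions for a direct strict-convexity argument via the second derivative, which immediately gives the unimodal shape on a fixed neighbourhood and lets the implicit function theorem handle the $\epsilon$-dependence of $\bar p$. The cost, as you correctly flag, is that you need $F''_{\max}$ continuous near $p^*$, which is not in the lemma statement but is available from the ambient hypotheses of Lemma~\ref{lemma:conditional-upperbound}. Given that the paper itself invokes $\Pi''(p^*)<0$ (equivalently $2F'_{\max}(p^*)+p^*F''_{\max}(p^*)>0$) in the surrounding text, your appeal to the stronger regularity is entirely appropriate and yields a more complete proof.
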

    \begin{proof}
        See Appendix \ref{proofs:technical-lemmas}.
    \end{proof}
    We have to choose $\delta$ in this way to later derive (\ref{eq:limit-result}), moreover $\delta$ can only be chosen this way if $\epsilon$ is small enough, therefore we require $n$ to be larger than some constant $C_1$. Now, the first and third terms of (\ref{eq:three-terms}) can be bounded by a simple argument. Observe that $f(t;0)>0$ if $|t-p^*|>\delta$ for $\delta>0$, so
    $$
    k(0,p^*-\delta;0)\le 1 + \frac{p^*-\delta}{\inf_{t\in[0,p^*-\delta]}\{f(t;\epsilon)\}}<\infty.
    $$
    Analogously, $k(p^*+\delta,\beta;0)$ can be bounded. 
    The second term of (\ref{eq:three-terms}) depends on $\epsilon$ and diverges as $\epsilon\to 0$, but we have the following result
    \begin{equation}
        \lim_{\epsilon\to 0}\epsilon^{\frac{1}{2}}k(p^*-\delta,p^*+\delta;\epsilon)\le\sqrt{2\pi^2\frac{(1-F_{\max}(p^*))^2}{p^*F''_{\max}(p^*)+2F'_{\max}(p^*)}}. \label{eq:limit-result}
    \end{equation}
    The proof of this statement is rather technical and therefore relegated to Lemma \ref{lemma:technical-upperbound-argument} in the appendix. As a consequence of the above result,
    $$
    k(0,\beta;\epsilon)\le C_2 + \sqrt{2}\pi\left(\frac{(1-F_{\max}(p^*))^2}{p^*F''_{\max}(p^*)+2F'_{\max}(p^*)}\right)^{\frac{1}{2}}\epsilon^{-\frac{1}{2}},
    $$
    where $C_2=k(0,p^*-\delta;0)+k(p^*+\delta,\beta;0)$ is a constant that does not depend on $n$.
    The above expression relates $n$, the left hand side, to the possible error. Rewriting yields that $n$ points can attain an error of
    $$
    2\pi^2\frac{(1-F_{\max}(p^*))^2}{2F'_{\max}(p^*)+p^*F''_{\max}(p^*)}(n-C_2)^{-2},
    $$ 
    which proves the theorem.
\end{cleverproof}

\section{Convergence and prophet inequalities}
\label{sec:prophets}

In this section we will do two things. First, we will walk through a numerical example to highlight our results in Section \ref{sec:first_bounds} and \ref{sec:tight_bounds}.

Secondly, we compare the payoff of the online policy to the payoff of the offline optimum, namely, the expected value of the maximum value. 
This comparison is captured by the ratio
$$
r(F_{\max},n)=\frac{V^*(F_{\max},n)}{\mathbb{E}[X_{\max}]}.
$$
Without any assumptions regarding $X_{\max}$ we cannot obtain meaningful results regarding the performance, because in \cite{hill1983stop} it is shown that for any $n$ there is a sequence of distributions $F_{k,n}$ such that $\lim_{k\to\infty}r(F_{k,n},n)= 1/n$. This demonstrates that the online policy can perform arbitrarily worse than the offline optimum.
We will however show that $r(F_{\max},n)$ has a lower bound independent of $n$ under a natural assumption on $X_{\max}$.

\subsection{Numerical example}
We will consider our results through the lens of an example. 
Let $X_{\max}=\sqrt{n}X_{\mathrm{base}}$, where $X_{\mathrm{base}}$ has a Fréchet distribution with a mean of $1$ and variance of $3$. The distribution function of $X_{\mathrm{base}}$ is therefore given by 
$
F_{\text{base}}(x)=\exp\left(-\left(x/s\right)^{-\gamma}\right),
$
with $s=0.613$ and $\gamma=2.197$ to have the prescribed mean and variance (in this section decimals are truncated to three places).

The choice of a sublinear mean of $X_{\max}$
is motivated by the idea that,
while each additional buyer added to the market, that is, the group of all potential buyers, may increase the maximum valuation, the larger the market becomes the less likely it is that a new buyer will significantly raise the current maximum. The Fréchet distribution was chosen, because it belongs to the family of generalised extreme value (GEV) distributions.
In extreme value theory, which concerns the study of the maximum of random variables, the GEV distribution is a key concept and is often used to model the distribution of maxima.
This intersection between extreme value theory and prophet inequality type problems has already been observed before by \cite{kennedy1991asymptotic,correa2021optimal,livanos2024minimization}.\medskip

\noindent The distribution $F_{\text{base}}$ has a unique monopoly price $p^*=0.524$ 
and is three times continuously differentiable at $p^*$.
$\Pi^*(F_{\max})=0.396\sqrt{n}$,
so by Corollary \ref{corol:scaling} $V^*(F_{\max},n)=0.396\sqrt{n}+\Theta(n^{-3/2})$. 
In Figure \ref{fig:bounds-computed} we have computed the precise bounds on $V^*(F_{\max},n)$ by computing the constants of Lemmas \ref{lemma:conditional-lowerbound} and \ref{lemma:conditional-upperbound}.\footnotemark 
\footnotetext{For Lemma \ref{lemma:conditional-lowerbound}, $\epsilon=0.826<1$. For Lemma \ref{lemma:conditional-upperbound}, $\beta=1.398$, there is no restriction on $\delta$, so $C_1=1$, for $\delta=0.1$ we find $C_2=10$. Lastly,
$C(F_{\mathrm{base}})=0.208$ for both lemmas.
}
Moreover, we have also computed the bound that results from Lemma \ref{lemma:w_(i+1)*P(X>=w_i)-bound} when the $w_i$'s are chosen iteratively.
\begin{figure}[!h]
    \centering
    \hspace{-0.7cm}
    \begin{tikzpicture}
    \definecolor{cbblue}{rgb}{0.00,0.75,0.75}
    \definecolor{cbred}{rgb}{0.84,0.37,0.00}
    \definecolor{cborange}{rgb}{0.90,0.60,0.00}
    \definecolor{cbgreen}{rgb}{0.00,0.62,0.45}
    \definecolor{cbpurple}{rgb}{0.80,0.47,0.65}
    \begin{groupplot}[
        group style={
            group size=2 by 1,
            horizontal sep=1cm,
        },
        axis x line=bottom,
        axis y line=left,
        xlabel={$k$},
        xlabel style={at={(axis description cs:0.5, -0.1)}, anchor=north},
        grid=none,
        width=8.5cm, height=7cm,
        legend style={
            at={(0.5,0.3)},
            anchor=west,
            cells={anchor=west}
        },
    ]
    
    \nextgroupplot[
        ylabel={$V^*(F_{\max},n)$},
        ylabel style={at={(axis description cs:-0.05,0.5)}},
        xlabel={$n$},
        xlabel style={
            at={(current axis.right of origin)},
            anchor=west
        },
        ymin=0, ymax=3,
        ytick={0.005, 1, 2, 2.995},
        yticklabels={0, 1, 2, 3},
        xtick={2.005,15, 30, 44.995},
        xticklabels={2, 15, 30, 45}
    ]
    \addplot[
        color=black, solid,
        ultra thick
    ] coordinates {
        (11, 14.949) (12, 4.933) (13, 3.076) (14, 2.445) (15, 2.172) (16, 2.043) (17, 1.981) (18, 1.955) (19, 1.949) (20, 1.957) (21, 1.973) (22, 1.994) (23, 2.018) (24, 2.045) (25, 2.074) (26, 2.103) (27, 2.134) (28, 2.165) (29, 2.196) (30, 2.228) (31, 2.259) (32, 2.291) (33, 2.322) (34, 2.353) (35, 2.384) (36, 2.415) (37, 2.446) (38, 2.476) (39, 2.506) (40, 2.536) (41, 2.566) (42, 2.595) (43, 2.625) (44, 2.653) (45, 2.682)
    };
    \addlegendentry{Lemma \ref{lemma:conditional-upperbound}}
    
    \addplot[
        color=black, dashed,
        thick
    ] coordinates {
        (2, 1.978) (3, 1.237) (4, 1.139) (5, 1.139) (6, 1.169) (7, 1.21) (8, 1.257) (9, 1.306) (10, 1.355) (11, 1.404) (12, 1.453) (13, 1.501) (14, 1.548) (15, 1.595) (16, 1.64) (17, 1.685) (18, 1.728) (19, 1.771) (20, 1.813) (21, 1.854) (22, 1.895) (23, 1.934) (24, 1.973) (25, 2.012) (26, 2.049) (27, 2.087) (28, 2.123) (29, 2.159) (30, 2.194) (31, 2.229) (32, 2.264) (33, 2.297) (34, 2.331) (35, 2.364) (36, 2.396) (37, 2.429) (38, 2.46) (39, 2.492) (40, 2.523) (41, 2.553) (42, 2.584) (43, 2.613) (44, 2.643) (45, 2.672) 
    };
    \addlegendentry{Lemma \ref{lemma:w_(i+1)*P(X>=w_i)-bound}}

    \addplot[
        color=black, dotted,
        thick
    ] coordinates {
        (2, 1.268) (3, 1.264) (4, 1.293) (5, 1.334) (6, 1.379) (7, 1.427) (8, 1.475) (9, 1.523) (10, 1.57) (11, 1.616) (12, 1.662) (13, 1.707) (14, 1.751) (15, 1.794) (16, 1.836) (17, 1.877) (18, 1.918) (19, 1.958) (20, 1.997) (21, 2.035) (22, 2.073) (23, 2.11) (24, 2.146) (25, 2.182) (26, 2.218) (27, 2.253) (28, 2.287) (29, 2.321) (30, 2.354) (31, 2.387) (32, 2.42) (33, 2.452) (34, 2.483) (35, 2.515) (36, 2.545) (37, 2.576) (38, 2.606) (39, 2.636) (40, 2.666) (41, 2.695) (42, 2.724) (43, 2.752) (44, 2.781) (45, 2.809) 
    };
    \addlegendentry{Theorem \ref{thm:randomised-bound-continuous-xmax}}

    \addplot[
        color=cbblue, solid,
        thick
    ] coordinates {
        (2, 0.695) (3, 0.756) (4, 0.837) (5, 0.918) (6, 0.995) (7, 1.067) (8, 1.136) (9, 1.202) (10, 1.264) (11, 1.324) (12, 1.381) (13, 1.437) (14, 1.49) (15, 1.541) (16, 1.591) (17, 1.639) (18, 1.686) (19, 1.732) (20, 1.777) (21, 1.82) (22, 1.863) (23, 1.904) (24, 1.945) (25, 1.985) (26, 2.024) (27, 2.062) (28, 2.1) (29, 2.137) (30, 2.174) (31, 2.209) (32, 2.245) (33, 2.279) (34, 2.313) (35, 2.347) (36, 2.38) (37, 2.413) (38, 2.445) (39, 2.477) (40, 2.509) (41, 2.54) (42, 2.571) (43, 2.601) (44, 2.631) (45, 2.661) 
    };
    \addlegendentry{Lemma \ref{lemma:conditional-lowerbound}}

    \nextgroupplot[
        ylabel={$\Theta(n^{-3/2})$ constant},
        ylabel style={at={(axis description cs:-0.05,0.5)}},
        xlabel={$n$},
        xlabel style={
            at={(current axis.right of origin)},
            anchor=west
        },
        ymin=0, ymax=5,
        ytick={0.005, 1, 2, 3, 4, 4.995},
        yticklabels={0, 1, 2, 3, 4, 5},
        xtick={300, 600, 899.995},
        xticklabels={300, 600, 900},
    ]
    \addplot[
        color=black, solid,
        ultra thick
    ] coordinates {
        (11, 497.402) (12, 147.987) (13, 77.191) (14, 50.357) (15, 36.997) (16, 29.232) (17, 24.245) (18, 20.811) (19, 18.321) (20, 16.443) (21, 14.982) (22, 13.817) (23, 12.867) (24, 12.081) (25, 11.419) (26, 10.855) (27, 10.369) (28, 9.947) (29, 9.577) (30, 9.249) (31, 8.958) (32, 8.697) (33, 8.462) (34, 8.25) (35, 8.057) (36, 7.881) (37, 7.72) (38, 7.571) (39, 7.435) (40, 7.308) (41, 7.191) (42, 7.081) (43, 6.98) (44, 6.884) (45, 6.795) (46, 6.712) (47, 6.633) (48, 6.559) (49, 6.489) (50, 6.423) (51, 6.361) (52, 6.301) (53, 6.245) (54, 6.192) (55, 6.141) (60, 5.919) (75, 5.473) (90, 5.203) (105, 5.022) (120, 4.892) (135, 4.795) (150, 4.719) (165, 4.658) (180, 4.609) (195, 4.567) (210, 4.532) (225, 4.502) (240, 4.476) (255, 4.453) (270, 4.433) (285, 4.415) (300, 4.399) (315, 4.385) (330, 4.372) (345, 4.36) (360, 4.349) (375, 4.339) (390, 4.33) (405, 4.322) (420, 4.314) (435, 4.306) (450, 4.3) (465, 4.293) (480, 4.288) (495, 4.282) (510, 4.277) (525, 4.272) (540, 4.267) (555, 4.263) (570, 4.259) (585, 4.255) (600, 4.251) (615, 4.248) (630, 4.244) (645, 4.241) (660, 4.238) (675, 4.235) (690, 4.233) (705, 4.23) (720, 4.227) (735, 4.225) (750, 4.223) (765, 4.22) (780, 4.218) (795, 4.216) (810, 4.214) (825, 4.212) (840, 4.21) (855, 4.209) (870, 4.207) (885, 4.205) (900, 4.204) 
    };
    \addlegendentry{Lemma \ref{lemma:conditional-upperbound}}
    
    \addplot[
        color=black, dashed,
        thick
    ] coordinates {
        (2, 4.008) (3, 2.861) (4, 2.771) (5, 2.823) (6, 2.905) (7, 2.989) (8, 3.068) (9, 3.141) (10, 3.205) (11, 3.263) (12, 3.314) (13, 3.36) (14, 3.403) (15, 3.44) (16, 3.474) (17, 3.505) (18, 3.532) (19, 3.558) (20, 3.582) (21, 3.604) (22, 3.624) (23, 3.643) (24, 3.659) (25, 3.676) (26, 3.691) (27, 3.704) (28, 3.717) (29, 3.73) (30, 3.741) (31, 3.752) (32, 3.763) (33, 3.772) (34, 3.782) (35, 3.791) (36, 3.799) (37, 3.807) (38, 3.814) (39, 3.821) (40, 3.829) (41, 3.835) (42, 3.841) (43, 3.847) (44, 3.852) (45, 3.858) (46, 3.864) (47, 3.868) (48, 3.872) (49, 3.878) (50, 3.882) (51, 3.887) (52, 3.89) (53, 3.894) (54, 3.899) (55, 3.903) (60, 3.919) (75, 3.956) (90, 3.982) (105, 3.999) (120, 4.014) (135, 4.025) (150, 4.033) (165, 4.04) (180, 4.046) (195, 4.05) (210, 4.055) (225, 4.059) (240, 4.062) (255, 4.066) (270, 4.067) (285, 4.07) (300, 4.072) (315, 4.073) (330, 4.075) (345, 4.077) (360, 4.079) (375, 4.08) (390, 4.081) (405, 4.083) (420, 4.083) (435, 4.084) (450, 4.086) (465, 4.086) (480, 4.086) (495, 4.088) (510, 4.088) (525, 4.088) (540, 4.09) (555, 4.089) (570, 4.09) (585, 4.09) (600, 4.091) (615, 4.093) (630, 4.092) (645, 4.094) (660, 4.093) (675, 4.093) (690, 4.094) (705, 4.095) (720, 4.095) (735, 4.095) (750, 4.096) (765, 4.095) (780, 4.096) (795, 4.097) (810, 4.097) (825, 4.097) (840, 4.098) (855, 4.098) (870, 4.097) (885, 4.098) (900, 4.099) 
    };
    \addlegendentry{Lemma \ref{lemma:w_(i+1)*P(X>=w_i)-bound}}    

    \addplot[
        color=cbblue, solid,
        thick
    ] coordinates {
        (2, 0.378) (3, 0.362) (4, 0.354) (5, 0.35) (6, 0.346) (7, 0.343) (8, 0.341) (9, 0.338) (10, 0.336) (11, 0.335) (12, 0.333) (13, 0.332) (14, 0.331) (15, 0.33) (16, 0.329) (17, 0.328) (18, 0.327) (19, 0.326) (20, 0.326) (21, 0.325) (22, 0.325) (23, 0.324) (24, 0.324) (25, 0.323) (26, 0.323) (27, 0.323) (28, 0.322) (29, 0.322) (30, 0.322) (31, 0.321) (32, 0.321) (33, 0.321) (34, 0.321) (35, 0.32) (36, 0.32) (37, 0.32) (38, 0.32) (39, 0.32) (40, 0.319) (41, 0.319) (42, 0.319) (43, 0.319) (44, 0.319) (45, 0.319) (46, 0.319) (47, 0.318) (48, 0.318) (49, 0.318) (50, 0.318) (51, 0.318) (52, 0.318) (53, 0.318) (54, 0.318) (55, 0.318) (60, 0.317) (75, 0.316) (90, 0.316) (105, 0.315) (120, 0.315) (135, 0.315) (150, 0.314) (165, 0.314) (180, 0.314) (195, 0.314) (210, 0.314) (225, 0.314) (240, 0.314) (255, 0.314) (270, 0.313) (285, 0.313) (300, 0.313) (315, 0.313) (330, 0.313) (345, 0.313) (360, 0.313) (375, 0.313) (390, 0.313) (405, 0.313) (420, 0.313) (435, 0.313) (450, 0.313) (465, 0.313) (480, 0.313) (495, 0.313) (510, 0.313) (525, 0.313) (540, 0.313) (555, 0.313) (570, 0.313) (585, 0.313) (600, 0.313) (615, 0.313) (630, 0.313) (645, 0.313) (660, 0.313) (675, 0.313) (690, 0.313) (705, 0.313) (720, 0.313) (735, 0.313) (750, 0.313) (765, 0.313) (780, 0.313) (795, 0.313) (810, 0.313) (825, 0.313) (840, 0.313) (855, 0.313) (870, 0.313) (885, 0.313) (900, 0.313) 
    };
    \addlegendentry{Lemma \ref{lemma:conditional-lowerbound}}

    \end{groupplot}
    \end{tikzpicture}
    \caption{Explicit lower (blue) and upper (black) bounds on $V^*(F_{\max},n)$ (left) and the constant associated with $\Theta(n^{-3/2})$ (right).}
    \label{fig:bounds-computed}
\end{figure}
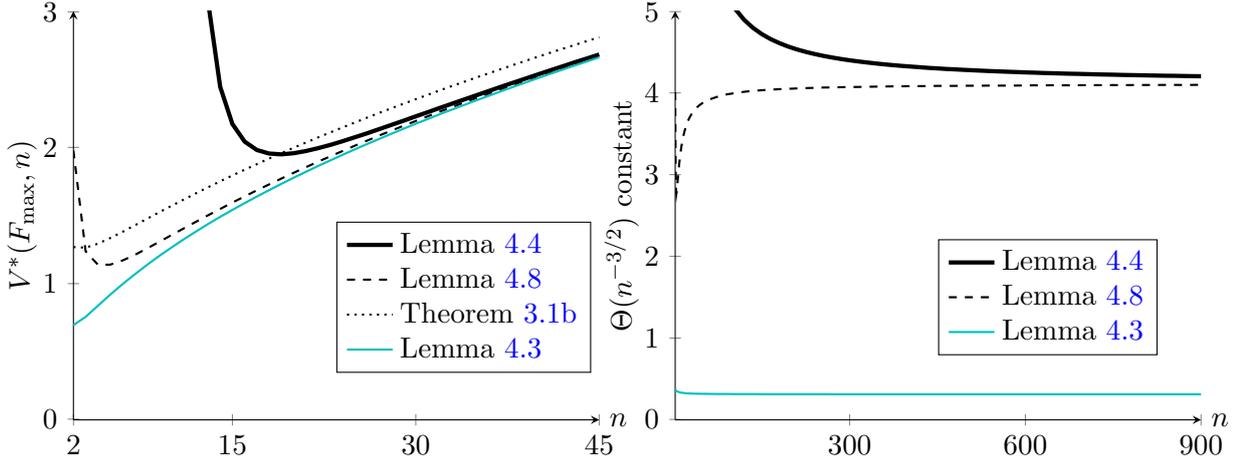

We see that the gap between the lower and upper bounds of $V^*(F_{\max},n)$ decreases very quickly, like we would expect from our results. We also observe that the constants associated with our lower and upper bound converge quickly, indicating that the asymptotic result kicks in early.

\subsection{Prophet inequality}
Because $V^*(F_{\max},n)$ is lower bounded by and converges to $\Pi^*(F_{\max})$, it suffices to consider the ratio $\Pi^*(F_{\max})/\mathbb{E}[X_{\max}]$ in order to establish a performance guarantee.

As noted before, no non-trivial performance guarantees are possible without additional assumptions due to \cite{hill1983stop} their result. However, the construction used to demonstrate this impossibility result is highly pathological.
This is because the coefficient of variation of the maximum valuation, which is given by
$
\mathrm{CV}(X_{\max})=\sqrt{\text{var}(X_{\max})}/\mathbb{E}[X_{\max}],
$ tends to infinity in the construction. The construction would therefore imply 
that the market becomes increasingly volatile as it grows, which contradicts the standard economic intuition that larger markets tend to be more stable. 
This therefore motivates the assumption that 
$\mathrm{CV}(X_{\max})$ does not grow too strongly.\medskip

\noindent The ratio between $\Pi^*(F_{\max})$ and $\mathbb{E}[X_{\max}]$ has already been studied under limited distributional information, such as a known mean and maximal dispersion; see also, e.g., \cite{van2024robust} for other choices of limited information. Tight (implicit) lower bounds for this ratio have been established in terms of the mean and variance. 
In particular, Lemma 2.3 in \cite{giannakopoulos2023robust} shows that this ratio is asymptotically bounded from below by $(1+\log(1+\mathrm{CV}(X_{\max})^2))^{-1}$. 
We extend this result by providing a non-asymptotic lower bound of the same order, leading to the following result on the performance of the monopoly price threshold policy.

\begin{theorem}
Let $n \in \N$ and $\mu,\sigma > 0$ (possibly depending on $n$). Suppose $F_{\max}$ has mean $\mu > 0$ and variance $\sigma^2$. Then for any $\beta > 1$, there exists a constant $c(\beta)$ such that 

$$
\inf_{\boldsymbol{X}\in\mathcal{I}(F_{\max}, n)}\sup_{\tau\in\mathcal{T}} \; \frac{\mathbb{E}[\boldsymbol{X}_{\tau}]}{\mathbb{E}[X_{\max}]} \geq  \left(c(\beta)+\beta\log \left(1+\frac{\sigma^2}{\mu^2}\right)\right)^{-1}.
$$
\label{thm:asymptotics-minimax-pricing}
\end{theorem}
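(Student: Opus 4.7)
The plan is to reduce the claim to a lower bound on the ratio $\Pi^*(F_{\max})/\mathbb{E}[X_{\max}]$, and then bound that ratio using only the mean and variance via a three-piece layer-cake decomposition together with the definition of the monopoly price.

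First, by Theorem \ref{thm:monopoly_worstcase} and the trivial weak minimax inequality
$$
\inf_{\boldsymbol{X}}\sup_{\tau\in\mathcal{T}}\mathbb{E}[\boldsymbol{X}_{\tau}] \;\geq\; \inf_{\boldsymbol{X}}\sup_{\tau\in\mathcal{D}}\mathbb{E}[\boldsymbol{X}_{\tau}] \;\geq\; \sup_{\tau\in\mathcal{D}}\inf_{\boldsymbol{X}}\mathbb{E}[\boldsymbol{X}_{\tau}] \;=\; \Pi^*(F_{\max}),
$$
it suffices to show $\Pi^*(F_{\max})/\mu \geq 1/\bigl(c(\beta)+\beta\log(1+\sigma^2/\mu^2)\bigr)$. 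I would then normalise by setting $Y = X_{\max}/\mu$, so $\mathbb{E}[Y]=1$ and $\mathbb{E}[Y^2]=1+s^2$ with $s^2=\sigma^2/\mu^2$, and write $\pi := \Pi^*(F_{\max})/\mu$.

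Two universal tail bounds now hold for every $t>0$: the monopoly-price bound $\mathbb{P}(Y\geq t)\leq \pi/t$ (a direct rescaling of the definition of $\pi$), and $\mathbb{P}(Y\geq t)\leq (1+s^2)/t^2$ (Markov applied to $Y^2$). Using $\mathbb{E}[Y]=\int_0^\infty \mathbb{P}(Y\geq t)\,\mathrm{d}t$, I would split at $t=\pi$ and $t=B:=(1+s^2)/\pi$, using the trivial bound $\mathbb{P}(Y\geq t)\leq 1$ on $[0,\pi]$, the monopoly bound on $[\pi,B]$, and the Markov-on-$Y^2$ bound on $[B,\infty)$. The choice of $B$ is engineered to equalise the first and third pieces, yielding after a short calculation the clean inequality
$$
1 \;\leq\; \pi\bigl(2 + \log\bigl((1+s^2)/\pi^2\bigr)\bigr).
$$

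Setting $x=1/\pi$ rearranges this to the self-referential inequality $x - 2\log x \leq 2 + \log(1+s^2)$. The final step is the key trick: for any $\beta>1$, the convex function $\phi(x)=x-2\log x$ satisfies
$$
\phi(x) \;\geq\; x/\beta + 2 - 2\log\bigl(2\beta/(\beta-1)\bigr),
$$
where the right-hand side is obtained by minimising $(1-1/\beta)x - 2\log x$ over $x>0$. Combining these and solving for $x$ gives $x \leq c(\beta) + \beta\log(1+s^2)$ with $c(\beta) := 2\beta\log\bigl(2\beta/(\beta-1)\bigr)$, which is exactly the required upper bound on $1/\pi$.

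The main obstacle is the last step: the layer-cake bound only yields a coefficient $1$ in front of $\log(1+\sigma^2/\mu^2)$, and any $\beta > 1$ slack is needed precisely to absorb the logarithmic self-reference in $x - 2\log x \leq \cdots$; this is also why $c(\beta)\to\infty$ as $\beta\to 1^+$. The secondary design choice is the splitting point $B=(1+s^2)/\pi$: any other placement would still yield a bound, but would produce a messier right-hand side that obscures the clean extraction of the $\log(1+\sigma^2/\mu^2)$ term.
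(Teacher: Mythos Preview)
Your proof is correct and takes a genuinely different route from the paper. The paper first invokes an external result of Chen et al.\ (\texttt{chen2024screening}) to identify the exact worst-case distribution over all $F_{\max}$ with mean $\mu$ and variance $\sigma^2$: it is the truncated equal-revenue law $F(x)=1-\pi/x$ on $[\pi,k]$ with a point mass at $k$. From the mean and variance constraints they obtain the implicit equation $g(1/z)=1+\sigma^2/\mu^2$ with $z=\pi/\mu$ and $g(t)=t^{-2}(2e^{t-1}-1)$, and then extract the bound by showing that $g^{-1}(e^x)$ is concave and applying a tangent-line inequality. Your argument avoids the worst-case identification entirely: you bound $\mathbb{E}[Y]=\int_0^\infty\mathbb{P}(Y\ge t)\,\mathrm{d}t$ directly for an \emph{arbitrary} $F_{\max}$ using only the two tail bounds $\mathbb{P}(Y\ge t)\le\pi/t$ and $\mathbb{P}(Y\ge t)\le(1+s^2)/t^2$, arriving at the self-referential inequality $x-2\log x\le 2+\log(1+s^2)$, and then resolve the self-reference with the same kind of tangent-line trick (applied to $x\mapsto x-2\log x$ rather than to $g^{-1}(e^x)$). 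The trade-off is that the paper's approach starts from the tight implicit ratio and so could in principle yield sharper constants, whereas your approach is entirely self-contained, requires no external reference, and makes the source of the $\log(1+\sigma^2/\mu^2)$ term completely transparent.
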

\begin{proof}
    Let $\mathcal{P}(\mu,\sigma)$ denote the set of distributions with mean $\mu$ and variance $\sigma^2$. We have that
    \begin{align*}
        \inf_{\boldsymbol{X}\in\mathcal{I}(F_{\max}, n)}\sup_{\tau\in\mathcal{T}} \; \frac{\mathbb{E}[\boldsymbol{X}_{\tau}]}{\mathbb{E}[X_{\max}]}&\ge \inf_{\boldsymbol{X}\in\mathcal{I}(F_{\max}, n)} \frac{\mathbb{E}[\Pi^*(F_{\max})]}{\mathbb{E}[X_{\max}]}\\
        &= \inf_{\boldsymbol{X}\in\mathcal{I}(F_{\max}, n)}\sup_{p} \frac{p\cdot\mathbb{P}(X_{\max}\ge p)}{\mathbb{E}[X_{\max}]}\\
        &\ge \inf_{X_{\max}\in \mathcal{P}(\mu,\sigma)}\sup_{p}\frac{p\cdot\mathbb{P}(X_{\max}\ge p)}{\mathbb{E}[X_{\max}]}.
    \end{align*}
    In \cite{chen2024screening} an implicit expression for the final quantity is given under an additional support upper bound constraint. Letting this upper bound tend to infinity results in the following worst-case distribution for our problem
$$
F(x)=\begin{cases}
    0,&\text{if } x\le \pi,\\
    1-\frac{\pi}{x},&\text{if } \pi < x\le k,\\
    1,&\text{if } x>k,
\end{cases}
$$
where $\pi$ is the optimal value of $p\cdot\mathbb{P}(X_{\max}\ge p)$ and solves $\pi+\pi\log\left(\frac{k}{\pi}\right)=\mu$ and $k$ is such that the variance is $\sigma^2$. The former condition yields $k=\pi\exp\left(\frac{\mu}{\pi}-1\right)$
and the latter $2\pi k - \pi^2=\sigma^2+\mu^2$. 
Taken together we obtain the following implicit expression,
$$
2z^2\exp\left(z^{-1}-1\right)-z^2=1+\frac{\sigma^2}{\mu^2},
$$
where $z=\frac{\pi}{\mu}$ is the quantity of interest.
Let $g(t)= t^{-2}\left(2\exp\left(t-1\right)-1\right)$. Observe that our implicit expression becomes $g(1/z)=1+\sigma^2/\mu^2$.

Now, for $x\ge 0$, $g^{-1}(e^x)$ is concave. This can be shown as follows: $g^{-1}(e^x)$ is concave for $x\ge 0$ if $\{(x,y)\in\mathbb{R}_+\times\mathbb{R}:y\le g^{-1}(e^x)\}=\{(x,y)\in\mathbb{R}_+\times\mathbb{R}: \log(g(y))\le x\}$ is convex, which is the case since $\log(g(y))$ is a convex function when it is non-negative ($y\ge 1$).

Because $g^{-1}(e^x)$ is concave it lies below its tangent lines. To compute the tangent we make use of
$\{(x,y)\in\mathbb{R}_+\times\mathbb{R}:y\le g^{-1}(e^x)\}=\{(x,y)\in\mathbb{R}_+\times\mathbb{R}: \log(g(y))\le x\}$. Using it we obtain that 
$g^{-1}(e^x)\le \alpha+\frac{g\left(\alpha\right)}{g'\left(\alpha\right)}\left(x-\log\left(g\left(\alpha\right)\right)\right)$ for $\alpha\ge 1$.

With this result we can go back to $g(1/z)=1+\sigma^2/\mu^2$. Observe that we have $$z=\frac{1}{g^{-1}(1+\sigma^2/\mu^2)},$$ so by our inequality,
$$z\ge\left(\alpha+\frac{g\left(\alpha\right)}{g'\left(\alpha\right)}\left(\log\left(1+\sigma^2/\mu^2\right)-\log\left(g\left(\alpha\right)\right)\right)\right)^{-1}=(c(\beta)+\beta\log (1+\sigma^2/\mu^2))^{-1},$$ where $\beta > 1$ and $c(\beta)$ a constant that depends only on $\beta$.
\end{proof}
Note that, given $\mu$ and $\sigma$ one could optimise over $\beta$ to obtain the best bound, however, the resulting problem has no closed form solution, so one might as well use the already known tight implicit lower bound.

The above result informs us that we already improve on the ratio of $\frac{1}{n}$ if $\mathrm{CV}(X_{\max})$ grows sub-exponentially, and that we recover a constant factor lower bound if we are in a typical market where $\mathrm{CV}(X_{\max})$ is uniformly bounded. It would be interesting to establish similar bounds, e.g., when marginal distributional information of the buyers is known.

\newpage
\bibliographystyle{plain}
\bibliography{references}
\newpage

\appendix
\section{Proof minimax equality}\label{proof:minimax-equality}
Before proving Theorem \ref{thm:minimax-equality} we provide several definitions and supporting results for completeness.
We begin with the definition of a tight family of measures.

\begin{definition}[Tightness of measures; p.~59 \cite{billingsley1999convergence}]
    Let $S$ be a separable metric space and $\mathcal{P}(S)$ the collection of all probability measures on $(S,\mathcal{B}(S))$. A family of probability measures $\mathcal{I}\subset\mathcal{P}(S)$ is tight if for all $\epsilon>0$ there exists a compact $K_\epsilon\subset S$ such that $\mu(K_\epsilon)>1-\epsilon$ for all $\mu\in\mathcal{I}$.
\end{definition}

We now recall several concepts related to compactness relevant for our setting.
A set is sequentially compact if every sequence has a convergent subsequence. In metrizable spaces, sequential compactness is equivalent to compactness. 
Since $\mathcal{P}(S)$ is metrizable (as $S$ is a separable metric space), compactness and sequential compactness coincide in our setting.
A set is relatively compact if its closure is compact. Lastly, we say a set is weakly compact if it is compact with respect to the weak topology. This clarifies the following two theorem statements.
\begin{lemma}[Prokhorov's theorem; Theorem 5.1 \cite{billingsley1999convergence}]
    \label{thm:prokhorov}
    Let $S$ be a separable metric space and $\mathcal{I}$ a family of probability measures on $(S,\mathcal{B}(S))$. If $\mathcal{I}$ is tight then is relatively (sequentially) compact in the weak topology.
\end{lemma}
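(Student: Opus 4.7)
The plan is to prove Prokhorov's theorem by compactifying $S$, using Banach--Alaoglu to extract a subsequential weak-* limit of the embedded measures, and then exploiting tightness to show that the limit is actually a probability measure concentrated on $S$. Concretely, I would fix an arbitrary sequence $(\mu_n)_{n\ge 1}\subset\mathcal{I}$ and aim to produce a subsequence converging weakly to some $\mu\in\mathcal{P}(S)$. Since $S$ is separable and metrizable, the Urysohn embedding realizes $S$ homeomorphically as a subset of a compact metric space $\bar{S}$ (e.g., the Hilbert cube $[0,1]^{\mathbb{N}}$). Each $\mu_n$ then extends to a Borel probability measure $\tilde{\mu}_n$ on $\bar{S}$ by assigning zero mass to $\bar{S}\setminus\iota(S)$.

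Next I would extract a subsequential limit on the compactification. Because $\bar{S}$ is a compact metric space, $C(\bar{S})$ is a separable Banach space, and by the Riesz--Markov theorem the finite positive Borel measures on $\bar{S}$ sit inside the closed unit ball of $C(\bar{S})^*$. By the Banach--Alaoglu theorem this unit ball is weak-* compact, and separability of $C(\bar{S})$ makes the weak-* topology metrizable there. Hence some subsequence $(\tilde{\mu}_{n_j})$ converges weakly-* to a positive measure $\tilde{\mu}$ on $\bar{S}$ with $\tilde{\mu}(\bar{S})\le 1$.

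Then I would invoke tightness to promote weak-* convergence on $\bar{S}$ to weak convergence on $S$. Tightness furnishes compact sets $K_k\subset S$ (also compact in $\bar{S}$) with $\tilde{\mu}_n(K_k)\ge 1-1/k$ uniformly in $n$. The Portmanteau inequality for compact sets under weak-* limits gives $\tilde{\mu}(K_k)\ge \limsup_j \tilde{\mu}_{n_j}(K_k)\ge 1-1/k$, so letting $k\to\infty$ yields $\tilde{\mu}\bigl(\bigcup_k K_k\bigr)=1$. Thus $\tilde{\mu}$ is supported on $S$ and corresponds to a genuine $\mu\in\mathcal{P}(S)$. For any $f\in C_b(S)$, I would verify $\int f\,d\mu_{n_j}\to\int f\,d\mu$ by truncating on $K_k$, extending $f|_{K_k}$ to an element of $C(\bar{S})$ via the Tietze extension theorem, and bounding the residual by $\|f\|_\infty/k$ uniformly in $j$.

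The main obstacle, and the place where tightness is indispensable, is the final identification step: relative weak-* compactness in $C(\bar{S})^*$ is automatic from Banach--Alaoglu, but without tightness the weak-* limit $\tilde{\mu}$ could lose mass to the boundary $\bar{S}\setminus\iota(S)$, so there is no guarantee that the limit is a probability measure on $S$. Tightness is precisely the hypothesis that rules out escape of mass and converts weak-* compactness on the compactification into weak sequential compactness inside $\mathcal{P}(S)$.
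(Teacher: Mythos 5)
The paper does not prove this lemma; it simply cites Billingsley (Theorem 5.1) and uses it as a black box, so there is no internal proof to compare against. That said, your sketch is a correct outline of the standard argument, and it is essentially the same strategy Billingsley himself uses: embed $S$ into the Hilbert cube, extract a convergent subsequence of the pushed-forward measures, and then use tightness to rule out loss of mass at the boundary of the compactification. The only stylistic difference is that you invoke Banach--Alaoglu together with separability of $C(\bar{S})$ to get a weak-* convergent subsequence, whereas Billingsley reaches the same conclusion more hands-on via a diagonal extraction along a countable determining class of functions; the two are interchangeable in this setting. One small point worth being explicit about: since $\iota(S)$ need not be Borel in $\bar{S}$ for a general separable metric $S$, it is cleaner to work with the $\sigma$-compact set $K=\bigcup_k K_k\subset S$ supplied by tightness, which is Borel, carries full mass for every $\mu_n$, and (by your Portmanteau step) also has full mass under the limit $\tilde{\mu}$; the limit measure $\mu$ is then defined on $\mathcal{B}(S)$ via restriction to $K$. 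With that adjustment, the truncation/Tietze argument for testing against $f\in C_b(S)$ closes the proof correctly.
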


\begin{lemma}[Von Neumann-Fan minimax theorem; Theorem 2 \cite{borwein2016very}]
    \label{thm:VonNeumannFan}
    Let $X$ be nonempty and convex, and let $Y$ be nonempty, weakly compact and convex. Let $g:X\times Y\to \mathbb{R}$ 
    be a bilinear\footnotemark function. Then, the following minimax equality holds
    $$
    \sup_{x\in X}\inf_{y\in Y}g(x,y)=\sup_{x\in X}\min_{y\in Y}g(x,y)=\min_{y\in Y}\sup_{x\in X}g(x,y)=\inf_{y\in Y}\sup_{x\in X}g(x,y).
    $$
\end{lemma}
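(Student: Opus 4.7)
The plan is to set up the problem so that the Von Neumann--Fan minimax theorem (Lemma~\ref{thm:VonNeumannFan} in the appendix) applies directly. To do so I need to (i)~identify a vector space in which $\mathcal{T}$ and $\mathcal{I}(F,n)$ sit as convex subsets, (ii)~realise $(\tau,\boldsymbol{X})\mapsto\mathbb{E}[\boldsymbol{X}_\tau]$ as a bilinear functional, and (iii)~establish weak compactness of $\mathcal{I}(F,n)$.

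For step (i)--(ii), I would parametrise $\mathcal{T}$ via its mixture representation: every randomised stopping rule is a convex combination of deterministic ones (as already observed in Section~\ref{sec:model}), so $\mathcal{T}$ is canonically identified with the set of probability measures on the set $\mathcal{D}$ of deterministic stopping rules. Under this identification, convexity of $\mathcal{T}$ is immediate, and the payoff factors as
\begin{equation*}
\mathbb{E}[\boldsymbol{X}_\tau] \;=\; \int_{\mathcal{D}} \mathbb{E}[\boldsymbol{X}_{\tau^{\mathrm{det}}}] \,\mathrm{d}\tau(\tau^{\mathrm{det}}) \;=\; \int_{\mathcal{D}} \int_{\mathbb{R}^n_+} \boldsymbol{v}_{\tau^{\mathrm{det}}} \,\mathrm{d}F_{\boldsymbol{X}}(\boldsymbol{v})\,\mathrm{d}\tau(\tau^{\mathrm{det}}),
\end{equation*}
which is linear in $\tau$ and linear in $F_{\boldsymbol{X}}$ separately, hence bilinear. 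Convexity of $\mathcal{I}(F,n)$ is immediate because if $\boldsymbol{X}_1,\boldsymbol{X}_2\in\mathcal{I}(F,n)$ and $\lambda\in[0,1]$, then the mixture $\lambda F_{\boldsymbol{X}_1}+(1-\lambda)F_{\boldsymbol{X}_2}$ is again a CDF whose coordinate-wise maximum has distribution $\lambda F_{\max}+(1-\lambda)F_{\max}=F_{\max}$.

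For step (iii), I would invoke Prokhorov's theorem (Lemma~\ref{thm:prokhorov}). Tightness of $\mathcal{I}(F,n)$ is cheap: given $\varepsilon>0$ pick $M=M(\varepsilon)$ with $\mathbb{P}(X_{\max}>M)<\varepsilon$, which exists since $F_{\max}$ is a fixed probability distribution; then for every $\boldsymbol{X}\in\mathcal{I}(F,n)$ the compact set $K_\varepsilon=[0,M]^n$ satisfies $\mathbb{P}(\boldsymbol{X}\in K_\varepsilon)=\mathbb{P}(X_{\max}\le M)\ge 1-\varepsilon$, because $\boldsymbol{X}\in K_\varepsilon$ iff the coordinate-wise maximum is at most $M$. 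Hence $\mathcal{I}(F,n)$ is relatively (weakly sequentially) compact. To upgrade to weak compactness I would show closure under weak limits: if $\boldsymbol{X}_k\Rightarrow\boldsymbol{X}$ with $\boldsymbol{X}_k\in\mathcal{I}(F,n)$, then by the continuous mapping theorem (the map $\boldsymbol{v}\mapsto\max_i v_i$ is continuous) we get $(\boldsymbol{X}_k)_{\max}\Rightarrow \boldsymbol{X}_{\max}$, and since each $(\boldsymbol{X}_k)_{\max}\sim F_{\max}$, we conclude $\boldsymbol{X}_{\max}\sim F_{\max}$ as well. Together with tightness this gives weak compactness.

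With all the hypotheses of the Von Neumann--Fan theorem verified, its conclusion yields the desired equality of $\sup\inf$ and $\inf\sup$. The main technical concern I foresee is step (iii)--verifying the exact topological set-up (separability of the ambient space of measures, the precise sense of "weakly compact" demanded by Lemma~\ref{thm:VonNeumannFan}, and continuity of the maximum as a map on $\mathbb{R}^n_+$ equipped with the product topology); these are standard but need to be handled carefully. A secondary subtlety is the measurability of the mixture representation of $\tau$ over $\mathcal{D}$, but this can be finessed by restricting $\mathcal{D}$ to the Borel measurable deterministic rules, which is closed under the operations used here.
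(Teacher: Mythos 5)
Note first that the statement you were asked about is the Von Neumann--Fan minimax theorem itself, which the paper cites as Theorem~2 of \cite{borwein2016very} and does not prove. What you have written is not a proof of that cited lemma but of Theorem~\ref{thm:minimax-equality}, which is the application of the cited lemma to the stopping problem. That is a reasonable reading of the task, so I compare your proposal against the paper's proof of Theorem~\ref{thm:minimax-equality} in Appendix~\ref{proof:minimax-equality}.

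Your proposal matches the paper's proof in structure and in all essential ideas: convexity of $\mathcal{T}$ via the mixture representation, convexity of $\mathcal{I}(F,n)$ by averaging CDFs with the same diagonal, bilinearity of $(\tau,F_{\boldsymbol{X}})\mapsto\mathbb{E}[\boldsymbol{X}_\tau]$, tightness via a product box $[0,M]^n$, and Prokhorov's theorem followed by the Von Neumann--Fan theorem. Where you differ, it is to your advantage. The paper upgrades ``relatively compact'' to ``weakly compact'' by asserting that $\mathcal{I}(F,n)$ is its own closure, without argument; you supply the missing step cleanly with the continuous-mapping theorem applied to $\boldsymbol{v}\mapsto\max_i v_i$. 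Your tightness argument is also slightly cleaner: the paper ties the existence of the box to $\mathbb{E}[X_{\max}]<\infty$, whereas, as you observe, the relevant quantile is finite for any fixed probability distribution. The one thing the paper does that you omit is the dispatching of the case $\mathbb{E}[X_{\max}]=\infty$ (the paper dismisses it as trivial before running the main argument). This is a minor omission, and your separability/measurability caveats at the end are exactly the right technical points to flag.
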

\footnotetext{Bilinearity is not strictly necessary. Specifically, it is sufficient for $g$ to be convex with respect to $x\in X$ and concave and upper-semicontinuous with respect to $y\in Y$, and weakly continuous in $y$ when restricted to $Y$. These conditions are, however, implied by bilinearity.}
Theorem \ref{thm:minimax-equality} is now a relatively straightforward application of the above two theorems.

\begin{cleverproof}{thm:minimax-equality}
    Let $X_{\max}\sim F$. If $E[X_{\max}]=\infty$ then the result holds trivially. Now the case for $E[X_{\max}]<\infty$.
    Recall that a randomised stopping rule randomises over deterministic stopping rules, therefore a convex combination of stopping rules is a mixture. More explicitly, if $\tau_\lambda$ is a convex combination of $\tau_1$ and $\tau_2$, then
    $$
    \mathbb{P}(\tau_\lambda=i|v_1,\dots,v_i)=\lambda\mathbb{P}(\tau_1=i|v_1,\dots,v_i)+(1-\lambda)\mathbb{P}(\tau_2=i|v_1,\dots,v_i).
    $$
    Such a convex combination is still a stopping rule, so $\mathcal{T}$ is convex. 
    To show $\mathcal{I}(F,n)$ is weakly compact and convex we first give an equivalent definition,
    $$
    \mathcal{I}(F,n)=\{F_{\boldsymbol{X}}:\mathbb{R}^n_+\to[0,1]\;|\;F_{\boldsymbol{X}}\text{ a CDF and }F_{\boldsymbol{X}}(t,\dots,t)=F(t),\forall t\in\mathbb{R}_+\}.
    $$
    Let $F_{\boldsymbol{X}_1},F_{\boldsymbol{X}_2}\in\mathcal{I}(F,n)$ and $\lambda\in(0,1)$, then the convex combination $F_{\boldsymbol{X}_\lambda}$ is still a CDF and $F_{\boldsymbol{X}_\lambda}(t,\dots,t)=F(t)$, so $\mathcal{I}(F,n)$ is convex.    
    Let $K_\epsilon=[0,1+F^{-1}(1-\epsilon)]^n$, because $E[X_{\max}]<\infty$, $F^{-1}(1-\epsilon)$ is finite, so $K_\epsilon$ is compact. Let $\mu\in\mathcal{I}(F,n)$, then $\mu(K_\epsilon)=F(1+F^{-1}(1-\epsilon))>1-\epsilon$, so $\mathcal{I}(F,n)$ is tight. 
    By Lemma \ref{thm:prokhorov}, $\mathcal{I}(F,n)$ is relatively compact in the weak topology, and because $\mathcal{I}(F,n)$ is its own closure, $\mathcal{I}(F,n)$ is compact in the weak topology, that is, $\mathcal{I}(F,n)$ is weakly compact.
    
    \noindent Let $\tau\in\mathcal{T},F_{\boldsymbol{X}}\in\mathcal{I}(F,n)$ and define $$g(\tau,F_{\boldsymbol{X}})=\mathbb{E}[\boldsymbol{X}_{\tau}]=\int_{\mathbb{R}_+^n}\sum_{i=1}^nv_i\cdot\mathbb{P}(\tau=i|v_1,\dots,v_i)\mathrm{d}F_{\boldsymbol{X}}(\boldsymbol{v}).$$ By linearity of summation and integration 
    \begin{align*}
        g(\lambda\tau_1+(1-\lambda)\tau_2,F_{\boldsymbol{X}})&=\lambda g(\tau_1,F_{\boldsymbol{X}}) + (1-\lambda)g(\tau_2,F_{\boldsymbol{X}})\\
        g(\tau,\lambda F_{\boldsymbol{X}_1}+(1-\lambda) F_{\boldsymbol{X}_2})&=\lambda g(\tau,F_{\boldsymbol{X}_1}) + (1-\lambda)g(\tau,F_{\boldsymbol{X}_2}),
    \end{align*}
    so $g$ is bilinear. The result now follows Lemma \ref{thm:VonNeumannFan}.
\end{cleverproof}

\newpage
\section{Proof Theorem \ref{thm:randomised-bound-continuous-xmax} general case}\label{proof:universal-upperbound-general}
In the main text Theorem \ref{thm:randomised-bound-continuous-xmax} is proven for continuous $F_{\max}$. Here we prove it in generality. The proof is similar, but a more careful construction is required, since the intermediate value theorem cannot be applied. 

Let $I(A)=\mathbb{P}(X_{\max}\in A)\mathbb{E}[X_{\max}|X_{\max}\in A]$ and denote by $w_0$ and $w_n$ the lower and upper bound of the support of $F_{\max}$. Define $w_1=\inf\{w:I([w_0,w_1])\ge\frac{1}{n}\mathbb{E}[X_{\max}]\}$. Because $F_{\max}$ may not be continuous it can occur that $\int_{w_0}^{w_1}t\mathrm{d}F_{\max}(t)> \frac{1}{n}\mathbb{E}[X_{\max}]$, however in this case it must be that $\mathbb{P}(X_{\max}=w_1)>0$. 
To recover equality, the point mass at $w_1$ is only taken into the first partition with a probability $p_1$. Specifically, $p_1=\inf\{p\ge 0:I([w_0,w_1))+pI(\{w_1\})\ge\frac{1}{n}\mathbb{E}[X_{\max}]\}$. 
we determine $w_2$ and $p_2$ in an analogous fashion.
Let $$w_2=\inf\{w: (1-p_1)I(\{w_1\})+I((w_1,w])\ge\frac{1}{n}\mathbb{E}[X_{\max}]\},$$ and let $$p_2=\inf\{p\ge 0:(1-p_1)I(\{w_1\})+I((w_1,w_2))+pI(\{w_2\})\ge\frac{1}{n}\mathbb{E}[X_{\max}]\}.$$
This procedure can be applied recursively to obtain $w_1,\dots,w_{n-1},p_1,\dots,p_{n-1}$. Lastly, let $p_0=0$ and $p_n=1$.

From these values, random variables $X_1,\dots,X_n$ with distributions $F_1,\dots,F_n$ can be constructed that satisfy
\begin{align*}
    \xi_i=&(1-p_{i-1})\mathbb{P}(X_{\max}=w_{i-1})+\mathbb{P}(X_{\max}\in(w_{i-1},w_i))+p_{i}\mathbb{P}(X_{\max}=w_{i}),\\
    \mathbb{P}(X_i\in A)=&\frac{1-p_{i-1}}{\xi_i}\mathbb{P}(X_{\max}\in \{w_{i-1}\} \cap A)+\frac{1}{\xi_i}\mathbb{P}(X_{\max}\in(w_{i-1},w_i)\cap A)\\&+\frac{p_{i}}{\xi_i}\mathbb{P}(X_{\max}\in\{w_{i}\}\cap A).
\end{align*}
By construction these random variables have the following properties. Firstly, $\xi_i\mathbb{E}[X_i]=\xi_i \int_{\mathbb{R}}t\mathrm{d}F_i(t)=\frac{1}{n}\mathbb{E}[X_{\max}]$. 
Secondly, by telescoping, $\sum_{j=i+1}^n\xi_j=(1-p_{i})\mathbb{P}(X_{\max}=w_i)+\mathbb{P}(X_{\max}>w_i)\le \mathbb{P}(X_{\max}\ge w_i)$. 

Now, recursively define $b_i=\mathrm{argmax}_{x\in[w_{i-1}, w_i]}\{r_i(b_1,\dots,b_{i-1}, x)\}$ for $i=1,\dots,n$. By construction, $\mathbb{P}(\tau=i|b_1,\dots,b_{i-1},t)\le \mathbb{P}(\tau=i|b_1,\dots,b_i)$ for $t\in[w_{i-1}, w_i]$. With this the instance can be defined. Let $(v_1,\dots,v_n)$ be $(b_1,\dots,b_{j-1},X_j,0,\dots,0)$ with probability $\xi_j$. Let $\tau$ be an optimal stopping rule for this instance, then 
    \begin{align*}
        V^*(F_{\max}, n)&=\sum_{j=1}^n\xi_j\int_{\mathbb{R}}\sum_{i=1}^nv_i(t)\mathbb{P}(\tau=i | v_1(t),\dots,v_i(t))\mathrm{d}F_{j}(t) &(\text{Definition})\\
        &=\sum_{j=1}^n\sum_{i=1}^j\xi_j\int_{\mathbb{R}}v_i(t)\mathbb{P}(\tau=i | v_1(t),\dots,v_i(t))\mathrm{d}F_{j}(t)&(i>j \ \implies \text{zero payoff})\\
        &=\sum_{i=1}^n\xi_i\int_{\mathbb{R}}v_i(t)\mathbb{P}(\tau=i | v_1(t),\dots,v_i(t))\mathrm{d}F_{i}(t)\\
        &+\sum_{i=1}^{n-1}\sum_{j=i+1}^n\xi_j\int_{\mathbb{R}}v_i(t)\mathbb{P}(\tau=i | v_1(t),\dots,v_i(t))\mathrm{d}F_{j}(t)&(\text{Sum decomposition})\\
        &=\sum_{i=1}^n\xi_i\int_{\mathbb{R}}t\mathbb{P}(\tau=i | b_1,\dots,b_{i-1},t)\mathrm{d}F_{i}(t)\\
        &+\sum_{i=1}^{n-1}\sum_{j=i+1}^n\xi_j\int_{\mathbb{R}}b_i\mathbb{P}(\tau=i | b_1,\dots,b_i)\mathrm{d}F_{j}(t)&(\text{Definition }v_i(t))\\
        &\le\sum_{i=1}^n\xi_i\int_{\mathbb{R}}t\mathbb{P}(\tau=i | b_1,\dots,b_i)\mathrm{d}F_{i}(t)&(\text{Property }b_i\text{'s})\\
        &+\sum_{i=1}^{n-1}w_i\mathbb{P}(\tau=i | b_1,\dots,b_i)\sum_{j=i+1}^n\xi_j&(b_i\le w_i \text{ and} \int_{\mathbb{R}}\mathrm{d}F_{j}(t)=1)\\
        &\le \max_{1\le i \le n}\left\{\xi_i \int_{\mathbb{R}}t\mathrm{d}F_i(t)\right\}+\max_{1 \le i \le n}\left\{w_i\sum_{j=i+1}^n\xi_j\right\}&(\text{Convex combination})\\
        &\le \max_{1\le i \le n}\left\{\xi_i \int_{\mathbb{R}}t\mathrm{d}F_i(t)\right\}+\max_{1 \le i \le n}\left\{w_i \mathbb{P}(X_{\max}\ge w_i)\right\}&(\text{Telescoping})\\
        &\le \Pi^*(F_{\max}) + \frac{1}{n}\mathbb{E}[X_{\max}].
    \end{align*} \qed
\newpage

\section{Proofs of technical lemmas for Lemma \ref{lemma:conditional-upperbound}}\label{proofs:technical-lemmas}
\begin{cleverproof}{lemma:delta-choice}
    Because $F_{\max}$ is continuously differentiable at $p^*$ there exists a $\delta_0>0$ such that $f(\cdot;\epsilon)$ is differentiable on $(p^*-\delta_0,p^*+\delta_0)$. Let $\bar{p}(\epsilon)$ be the minimiser of $f(\cdot;\epsilon)$. $\bar{p}(\epsilon)$ converges to $p^*$ as $\epsilon\to 0$, so there is an $\epsilon_0$ such that for $\epsilon<\epsilon_0$, $\bar{p}(\epsilon)\in(p^*-\delta_0,p^*+\delta_0)$. Moreover, for $\epsilon<\epsilon_0$ $f(\cdot;\epsilon)$ is differentiable around $\bar{p}(\epsilon)$, and since $\bar{p}(\epsilon)$ is a minimiser there is a non-empty interval $(l(\epsilon),r(\epsilon))$ such that $f(\cdot;\epsilon)$ is decreasing on $(l(\epsilon),\bar{p}(\epsilon)]$ and increasing on $[\bar{p}(\epsilon),r(\epsilon))$. 
    We pick $l$ and $r$ such that $(l(\epsilon),r(\epsilon))$ is the largest such interval contained in $(p^*-\delta_0,p^*+\delta_0)$. 
    $l$ and $r$ are continuous in $\epsilon$ and $(l(0),r(0))$ is non-empty, so there exists an $\epsilon_0'$ such that $\bigcap\limits_{\epsilon<\epsilon_0'}(l(\epsilon),r(\epsilon))$ is non-empty and has $p^*$ in its interior.
    Therefore there is a $\delta>0$ such that $(p^*-\delta,p^*+\delta)\subset \bigcap\limits_{\epsilon<\epsilon_0'}(l(\epsilon),r(\epsilon))$. 
    
    For such a $\delta$ and $\epsilon<\epsilon_0'$, $M$ is increasing ($\mathbb{P}(X_{\max}\ge t)$ is decreasing)), continuous on $[p^*-\delta,p^*+\delta]$ (Because it is differentiable), $f$ is non-negative, decreasing on $(p^*-\delta,\bar{p}(\epsilon)]$ and increasing on $[\bar{p}(\epsilon),p^*+\delta)$.
\end{cleverproof}
\begin{remark}
    The essence of Lemma \ref{lemma:delta-choice} is encapsulated in Figure \ref{fig:delta-essence}.
    \begin{figure}[ht]
    \centering
    \begin{tikzpicture}[xscale=1.5, yscale=0.8]
        \draw[->, thick] (0,0) -- (6,0) node[below right] {$\epsilon$};
        \draw[thick] (0,-3.2) -- (0,3.2);

        \foreach \y/\label in {
            2.5/{$p^* + \delta_0$},
            1.2/{$p^* + \delta$},
            0/{$p^*$},
            -1.2/{$p^* - \delta$},
            -2.5/{$p^* - \delta_0$}
        } {
            \draw[thick] (0,\y) -- (-0.1,\y) node[left=3pt] {\label};
        }

        \draw[dashed, thick] (0,2.5) -- (6,2.5);
        \draw[dashed, thick] (0,-2.5) -- (6,-2.5);

        \draw[dashed, thick] (0,-1.2) -- (3,-1.2) -- (3,1.2) -- (0,1.2);

        \draw[->, thick] (3.2,-0.23) -- (3.05, -0.09);
        \node at (3.35,-0.3) {$\epsilon_0'$};

        \draw[smooth, thick] plot[domain=0:6] (\x, {1.9 - 0.5*sin(deg(\x))}) node[right] {$r(\epsilon)$};;
        \draw[smooth, thick] plot[domain=0:6] (\x, {-2 + 0.5*cos(deg(\x)) + 0.4*\x}) node[right] {$l(\epsilon)$};;

    \end{tikzpicture}
    \caption{The essence of Lemma \ref{lemma:delta-choice}}
    \label{fig:delta-essence}
\end{figure}
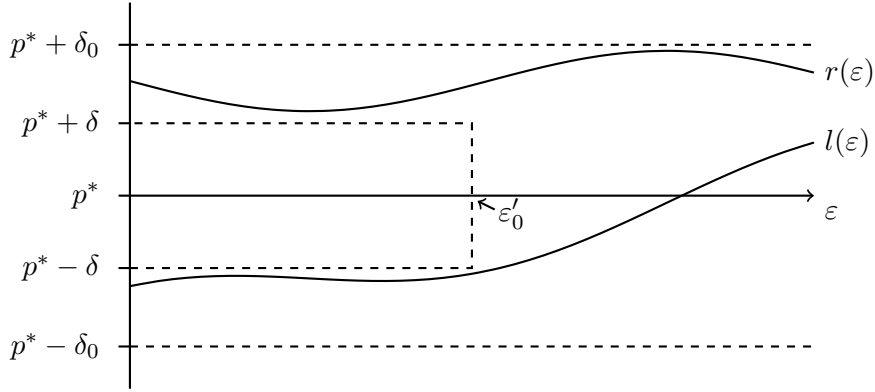
    What the lemma shows is that the dashed rectangle around $p^*$ between $l(\epsilon)$ and $r(\epsilon)$ exists.
\end{remark}
\begin{lemma}
    \label{lemma:stopping-time-difference-equation-1}
    Let $I$ be a continuous and increasing function which can be written as $I(t)=t+f(t)$ where $f$ is a decreasing and non-negative function on $[a,b]$ and let $I^k=I\circ I^{k-1}$, then
    $$
    \inf\{k\in\mathbb{N} : I^k(a)\ge b\}\le 1 + \int_a^bf(t)^{-1}\mathrm{d}t.
    $$
\end{lemma}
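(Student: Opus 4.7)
The plan is to compare the discrete recursion $t_{k+1}=I(t_k)=t_k+f(t_k)$ with the continuous ODE $\dot t=f(t)$, whose solution satisfies $k=\int_a^{t} ds/f(s)$. This is the natural comparison because one iteration of $I$ is exactly an Euler step of size $1$ for that ODE, so the integral should overestimate the number of iterations needed to cross from $a$ to $b$.

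To formalise this, I would set $t_0=a$ and $t_{k+1}=I(t_k)$, and let $K=\inf\{k\in\mathbb{N}:t_k\ge b\}$. Since $f\ge 0$ the sequence $(t_k)$ is non-decreasing, and by the definition of $K$ we have $t_k<b$ for every $k<K$. The key (and really only) step is the monotonicity of $f$: for any $s\in[t_k,t_{k+1}]$ we have $f(s)\le f(t_k)$, so
$$\int_{t_k}^{t_{k+1}}\frac{ds}{f(s)}\ \ge\ \frac{t_{k+1}-t_k}{f(t_k)}\ =\ 1.$$

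Summing this estimate over $k=0,1,\dots,K-2$, and using $t_0=a$, $t_{K-1}<b$ together with $1/f\ge 0$, yields
$$K-1\ \le\ \int_a^{t_{K-1}}\frac{ds}{f(s)}\ \le\ \int_a^b\frac{ds}{f(s)},$$
which is the claimed bound.

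The only thing to check is that the edge cases cause no trouble. If $K=1$ the sum over $k=0,\dots,K-2$ is empty and the bound reduces to $0\le\int_a^b 1/f$, which is trivially true; if $f$ vanishes somewhere in $[a,b]$ the integral on the right equals $+\infty$ and the claim is vacuous (and, consistently, the iteration then cannot cross $b$); the case $K=\infty$ is handled by taking a limit of the finite-$K$ estimate using monotone convergence. So the whole argument rests on the one-line telescoping comparison above, and there is no substantive obstacle beyond booking these degenerate cases.
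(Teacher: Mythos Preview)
Your argument is correct. The only step that carries any weight is the inequality $\int_{t_k}^{t_{k+1}}\!1/f\ge 1$, and this follows immediately from $f$ being decreasing on $[a,b]$ together with $[t_k,t_{k+1}]\subset[a,b)$ for $k\le K-2$; the edge cases you list are handled as you say.

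Your route is genuinely different from, and more elementary than, the paper's. The paper proves the same bound by interpreting one application of $I$ as a unit Euler step for the ODE $x'=f(x)$: it introduces the refined map $\tilde I(t;\delta)=t+\delta f(t)$, shows that subdividing a step can only lower the trajectory (via $\tilde I^2(t;\delta)\le \tilde I(t;2\delta)$, which is where the monotonicity of $f$ enters), passes to the limit $\delta\to 0$ to obtain the ODE solution $x(\cdot;a)$ with $x(k;a)\le I^k(a)$, and then reads off the bound from $x^{-1}(b;a)=\int_a^b 1/f$. Your telescoping comparison bypasses the Euler-scheme limit entirely and gets to the integral in one line. The paper's approach buys a conceptual picture (made explicit in their Figure~\ref{fig:I-tilde-process}) and is set up so that the companion Lemma~\ref{lemma:stopping-time-difference-equation-2}, where $f$ is increasing, follows by the symmetric argument applied to $I^{-1}$; your method would handle that case just as directly by bounding $\int_{t_k}^{t_{k+1}}1/\tilde f$ using the right endpoint instead of the left.
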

\begin{proof}
    Let $\tilde{I}(t;\delta)=t+\delta f(t)$ and $\tilde{I}^k=\tilde{I}\circ \tilde{I}^{k-1}$. For $t$ such that $t\ge a$ and $t+\delta f(t)\le b$, we have $t+\delta f(t)\ge t\ge a$ by the non-negativity of $f$, and $f(t)\ge f(t+\delta f(t))$ since $f$ is decreasing on $[a,b]$ and $t+\delta f(t)\in[a,b]$. Therefore,
    \begin{align*}
        \tilde{I}^2(t;\delta)&=\tilde{I}(\tilde{I}(t;\delta);\delta)=\tilde{I}(t;\delta)+\delta f(\tilde{I}(t;\delta))=t+\delta f(t)+\delta f(t+\delta f(t))\\
        &\le t+2\delta f(t)=\tilde{I}(t;2\delta).
    \end{align*}    
    Iterative application yields that $\tilde{I}^m(t;\delta)\le \tilde{I}(t;m\delta)$. Put differently, for $\delta^{-1}\in\mathbb{N}$, $I(t)=\tilde{I}(t;1)\ge\tilde{I}^{1/\delta}(t;\delta)$. We now show $I^k(t)\ge \tilde{I}^{k/\delta}(t;\delta)$ by induction. It is true for $k=1$ and $I^{k+1}(t)=I^k(I(t))\ge I^k(\tilde{I}^{1/\delta}(t;\delta))$, because $I^k$ is increasing, now $I^k(\tilde{I}^{1/\delta}(t;\delta))\ge \tilde{I}^{k/\delta}(\tilde{I}^{1/\delta}(t;\delta);\delta)=\tilde{I}^{(k+1)/\delta}(t;\delta)$.
    
    Let $x_\delta(s;a)=\tilde{I}^{s/\delta}(a;\delta)$ with $s$ an integer multiple of $\delta$, then
    $x_\delta(s+\delta;a)=\tilde{I}^{1+s/\delta}(a;\delta)=\tilde{I}(x_\delta(s;a);\delta)$, therefore $\frac{1}{\delta}(x_\delta(s+\delta;a)-x_\delta(s;a))=f(x_\delta(s;a))$. Let $x(s;a)=\lim_{\delta\to 0}x_\delta(s;a)$, where the limit is over $\delta$ such that $s/\delta$ is an integer. By continuity of $I$, $x$ satisfies $x'(s;a)=f(x(s;a))$ and $x(0;a)=a$, moreover $x(k;a)\le I^k(a)$ for $k\in\mathbb{N}$. 
    Lastly, observe that $\frac{\mathrm{d}}{\mathrm{d}s}x^{-1}(s;a)=f(s)^{-1}$ and that $x^{-1}(a;a)=0$. The result now follows,
    \begin{align*}
        &\inf\{k\in\mathbb{N} : I^k(a)\ge b\}\le \inf\{k\in\mathbb{N} : x(k;a)\ge b\}\\
        \le&1+\inf\{t\in\mathbb{R} : x(t;a)\ge b\}= 1 + x^{-1}(b;a)= 1 + \int_a^bf(t)^{-1}\mathrm{d}t. \qedhere
    \end{align*}
\end{proof}
\begin{remark}
    The intuition behind Lemma \ref{lemma:stopping-time-difference-equation-1} is that if $I$ satisfies the conditions, then more frequent updates of the difference equation $w_{i+1}=I(w_i)$, that is a smaller $\delta$, monotonically decreases the trajectory. 
    In Figure \ref{fig:I-tilde-process} this is illustrated for a specific example, namely $I(t)=\frac{9}{10(1+t)e^{-t}}$, $a=0$ and $b=3/2$.
    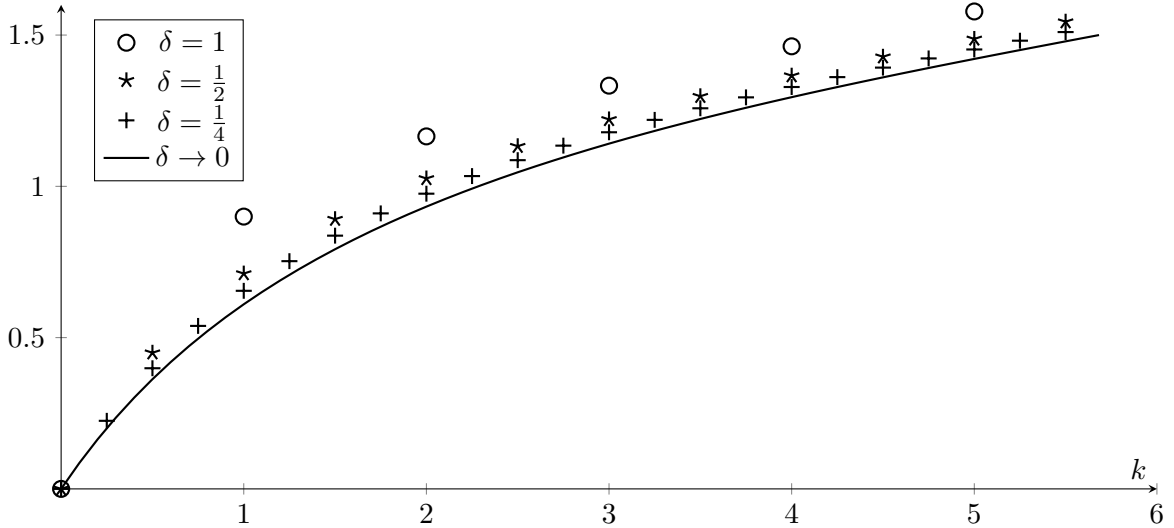
\begin{figure}[!h]
        \centering
        \begin{tikzpicture}
          \begin{axis}[
            axis lines=middle,
            xlabel={$k$},
            xmin=0, ymin=0,
            xmax=6, ymax=1.6,
            xtick distance=1,
            ytick distance=0.5,
            grid=none,
            legend pos=north west,
            width=16cm, height=8cm
          ]
        
          \addplot[
            only marks,
            mark=o,
            mark size=3pt,
            color=black,
            thick
          ] coordinates {
            (0, 0) (1, 0.9) (2, 1.165075) (3, 1.332770) (4, 1.462801) (5, 1.577973)
          };
          \addlegendentry{$\delta=1$}

          \addplot[
            only marks,
            mark=star,
            mark size=3pt,
            color=black,
            thick
          ] coordinates {
            (0,0) (0.5,0.45) (1,0.7117) (1.5,0.8915) (2,1.026) (2.5,1.1326) (3,1.2212) (3.5,1.2977) (4,1.3658) (4.5,1.4283) (5,1.4872) (5.5,1.5442)
          };
          \addlegendentry{$\delta=\frac{1}{2}$}
        
          \addplot[
            only marks,
            mark=+,
            mark size=3pt,
            color=black,
            thick
          ] coordinates {
            (0,0) (0.25,0.225) (0.5,0.3988) (0.75,0.5387) (1,0.6547) (1.25,0.7527) (1.5,0.837) (1.75,0.9106) (2,0.9757) (2.25,1.0339) (2.5,1.0865) (2.75,1.1345) (3,1.1787) (3.25,1.2197) (3.5,1.258) (3.75,1.2941) (4,1.3283) (4.25,1.361) (4.5,1.3924) (4.75,1.4228) (5,1.4524) (5.25,1.4814) (5.5,1.5099)
          };
          \addlegendentry{$\delta=\frac{1}{4}$}

          \addplot[
            color=black,
            thick
          ] coordinates {
            (0,0) (0.1,0.0858) (0.2,0.164) (0.3,0.2356) (0.4,0.3015) (0.5,0.3625) (0.6,0.4191) (0.7,0.4718) (0.8,0.5209) (0.9,0.567) (1,0.6102) (1.1,0.6508) (1.2,0.6891) (1.3,0.7253) (1.4,0.7596) (1.5,0.7921) (1.6,0.823) (1.7,0.8525) (1.8,0.8805) (1.9,0.9073) (2,0.9329) (2.1,0.9575) (2.2,0.981) (2.3,1.0036) (2.4,1.0254) (2.5,1.0464) (2.6,1.0666) (2.7,1.0861) (2.8,1.105) (2.9,1.1233) (3,1.1411) (3.1,1.1583) (3.2,1.175) (3.3,1.1913) (3.4,1.2071) (3.5,1.2226) (3.6,1.2376) (3.7,1.2524) (3.8,1.2668) (3.9,1.2809) (4,1.2947) (4.1,1.3083) (4.2,1.3216) (4.3,1.3346) (4.4,1.3475) (4.5,1.3602) (4.6,1.3727) (4.7,1.385) (4.8,1.3972) (4.9,1.4092) (5,1.4211) (5.1,1.4329) (5.2,1.4446) (5.3,1.4562) (5.4,1.4677) (5.5,1.4792) (5.6,1.4906) (5.683,1.5) 
          };
          \addlegendentry{{$\delta\to0$}}
        
          \end{axis}
        \end{tikzpicture}
        \caption{The processes $\{\tilde{I}^{k/\delta}(a;\delta)\}_{k=0}^\infty$}
        \label{fig:I-tilde-process}
    \end{figure}
\end{remark}

\begin{lemma}
    \label{lemma:stopping-time-difference-equation-2}
    Let $I$ be a continuous and increasing function which can be written as $I(t)=t+f(t)$ where $f$ is an increasing and non-negative function on $[a,b]$. Let $I^k=I\circ I^{k-1}$ and $\tilde{f}(t)=t-I^{-1}(t)$, then
    $$
    \inf\{k\in\mathbb{N}  : I^k(a)\ge b\}\le 1 + \int_a^b\tilde{f}(t)^{-1}\mathrm{d}t.
    $$
\end{lemma}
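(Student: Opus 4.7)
The plan is to reduce this lemma to the previous one by running the dynamics backwards from $b$ under $I^{-1}$, and then reflecting about the midpoint of $[a,b]$ so that the reflected sequence moves \emph{upward} and satisfies the hypotheses of Lemma \ref{lemma:stopping-time-difference-equation-1}. This is natural because, when $f$ is increasing, $I$ overshoots more and more as we climb toward $b$, but the \emph{inverse} map $J := I^{-1}$ has steps $\tilde{f}(t) = t - I^{-1}(t) = f(I^{-1}(t))$, which is increasing in $t$ and non-negative; reflecting makes those steps decreasing in the reflected variable, which is exactly the setting of Lemma \ref{lemma:stopping-time-difference-equation-1}.

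First, I would observe that $I^k$ is increasing, so
\[
I^k(a) \ge b \iff a \ge J^k(b),
\]
and hence $\inf\{k : I^k(a) \ge b\} = \inf\{k : J^k(b) \le a\}$. Writing $J(t) = t - \tilde{f}(t)$ with $\tilde{f}(t) = f(I^{-1}(t)) \ge 0$ confirms that the backward iterates $b_i := J^i(b)$ form a decreasing sequence in $[a,b]$ and the question becomes: how many backward steps of size $\tilde{f}(b_i)$ are needed to pass below $a$?

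Next, I would reflect by setting $c_i := a + b - b_i$, so that $c_0 = a$, $c_k \ge b \iff b_k \le a$, and the recursion becomes
\[
c_{i+1} = c_i + g(c_i), \qquad g(c) := \tilde{f}(a + b - c).
\]
The function $g$ is continuous, non-negative, and \emph{decreasing} on $[a,b]$ since $\tilde{f}$ is increasing. The forward map $\hat{I}(c) := c + g(c) = a + b - I^{-1}(a+b-c)$ is continuous and increasing in $c$, because $I^{-1}$ is increasing and we apply two sign flips. Thus $\hat{I}$ satisfies every hypothesis of Lemma \ref{lemma:stopping-time-difference-equation-1} with $g$ in the role of $f$, and applying that lemma gives
\[
\inf\{k : c_k \ge b\} \le 1 + \int_a^b g(t)^{-1}\,dt.
\]

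Finally, the substitution $s = a + b - t$ converts the integral back to the original variable:
\[
\int_a^b \tilde{f}(a+b-t)^{-1}\,dt = \int_a^b \tilde{f}(s)^{-1}\,ds,
\]
which yields the stated bound. The main obstacle is spotting the correct change of perspective: a direct continuous-time comparison as in Lemma \ref{lemma:stopping-time-difference-equation-1} would give the wrong inequality here (because, with $f$ increasing, finer subdivisions of the update increase rather than decrease the trajectory), so one has to invert and then reflect to restore a setting where the earlier lemma applies verbatim. Once this reduction is identified, each step is a short verification.
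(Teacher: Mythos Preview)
Your proof is correct and close in spirit to the paper's, but with a small structural difference worth noting. Both arguments begin the same way: rewrite the forward hitting problem as the backward one $\inf\{k:I^{-k}(b)\le a\}$ and observe that $\tilde f(t)=f(I^{-1}(t))$ is increasing. From there the paper simply re-runs the ODE-comparison argument of Lemma~\ref{lemma:stopping-time-difference-equation-1} in reverse, showing $\tilde I^2(t;\delta)\ge \tilde I(t;2\delta)$ for $\tilde I(t;\delta)=t-\delta\tilde f(t)$ and passing to the limiting differential equation. You instead reflect via $c_i=a+b-b_i$, which converts the backward iteration into a forward one $\hat I(c)=c+g(c)$ with $g(c)=\tilde f(a+b-c)$ decreasing, and then invoke Lemma~\ref{lemma:stopping-time-difference-equation-1} as a black box; the change of variables $s=a+b-t$ recovers the desired integral. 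Your route avoids repeating the subdivision/ODE analysis at the cost of the extra reflection step, while the paper's route is more self-contained but duplicates the earlier machinery. Both are equally valid and of comparable length.
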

\begin{proof}
    Let $I^{-k}=I^{-1}\circ I^{-(k-1)}$. Observe that $\inf\{k : I^k(a)\ge b\}=\inf\{k : I^{-k}(b)\le a\}$. Because $f$ is increasing, the sequence $\{I^{-k}(b)\}$ decreases with decreasing increments, therefore $I^{-1}(t)=t-\tilde{f}(t)$ with $\tilde{f}$ an increasing function. The remainder of the proof is analogous to Lemma \ref{lemma:stopping-time-difference-equation-1}, so the details are omitted. Let $\tilde{I}(t;\delta)=t-\delta \tilde{f}(t)$ and $\tilde{I}^k=\tilde{I}\circ \tilde{I}^{k-1}$ then,
    \begin{align*}
        \tilde{I}^2(t;\delta)&=\tilde{I}(\tilde{I}(t;\delta);\delta)=\tilde{I}(t;\delta)-\delta f(\tilde{I}(t;\delta))=t-\delta f(t)-\delta f(t-\delta f(t))\\
        &\ge t-2\delta f(t)=\tilde{I}(t;2\delta).
    \end{align*}
    From this inequality we obtain $I^{-k}(t)=\tilde{I}^k(t;1)\le x(k;t)$ where $x$ satisfies $x'(s;t)=\tilde{f}(x(s;t))$ and $x(0;t)=t$. Consequently,
    \begin{align*}
        &\inf\{k\in\mathbb{N} : I^k(a)\ge b\}=\inf\{k\in\mathbb{N}  : I^{-k}(b)\le a\}\le \inf\{k\in\mathbb{N} : x(k;b)\le a\}\\
        \le&1+\inf\{t\in\mathbb{R} : x(t;b)\le a\}= 1 + x^{-1}(a;b)= 1 - \int_b^a\tilde{f}(t)^{-1}\mathrm{d}t=1 + \int_a^b\tilde{f}(t)^{-1}\mathrm{d}t. \quad \qedhere
    \end{align*}
\end{proof}

\begin{lemma}\label{lemma:technical-upperbound-argument}
Let $\Pi^*=\Pi^*(F_{\max})$, $M(t;\epsilon):=\frac{\Pi^*+\epsilon}{\mathbb{P}(X_{\max}\ge t)}$, $M^k=M\circ M^{k-1}$, $k(a,b;\epsilon)=\inf\{k:M^k(a;\epsilon)\ge b\}$ and $\delta$ chosen according to Lemma \ref{lemma:delta-choice}, then 
    $$
    \lim_{\epsilon\to 0}\epsilon^{\frac{1}{2}}k(p^*-\delta,p^*+\delta;\epsilon)\le\sqrt{2\pi^2\frac{(1-F_{\max}(p^*))^2}{p^*F''_{\max}(p^*)+2F'_{\max}(p^*)}}.
    $$
\end{lemma}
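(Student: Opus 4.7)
By Lemma~\ref{lemma:delta-choice}, $f(\cdot;\epsilon)$ is non-negative on $(p^*-\delta,p^*+\delta)$, decreasing on $(p^*-\delta,\bar{p}(\epsilon)]$, increasing on $[\bar{p}(\epsilon),p^*+\delta)$, and $\bar{p}(\epsilon)\to p^*$ as $\epsilon\to 0$. The plan is to split the journey at $\bar{p}(\epsilon)$, applying Lemma~\ref{lemma:stopping-time-difference-equation-1} to the decreasing half and Lemma~\ref{lemma:stopping-time-difference-equation-2} to the increasing half, yielding
$$k(p^*-\delta,p^*+\delta;\epsilon) \;\le\; 2 + \int_{p^*-\delta}^{\bar{p}(\epsilon)}\frac{\mathrm{d}t}{f(t;\epsilon)} + \int_{\bar{p}(\epsilon)}^{p^*+\delta}\frac{\mathrm{d}t}{\tilde{f}(t;\epsilon)},$$
where $\tilde{f}(t;\epsilon) = t - M^{-1}(t;\epsilon)$. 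The relation $\tilde{f}(t;\epsilon) = f(t-\tilde{f}(t;\epsilon);\epsilon)$ together with $\tilde{f}\to 0$ uniformly on $(p^*-\delta,p^*+\delta)$ as $\epsilon\to 0$ lets us replace $\tilde{f}$ by $f$ at leading order in the second integral.

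Next I would Taylor expand $f(p^*+s;\epsilon) = (\Pi^*+\epsilon)/(1-F_{\max}(p^*+s)) - (p^*+s)$ around $(s,\epsilon) = (0,0)$. The first-order condition $\Pi'(p^*)=0$ translates to $p^*F'_{\max}(p^*) = 1-F_{\max}(p^*)$, which forces both the constant term and the coefficient of $s^1$ to vanish, leaving
$$f(p^*+s;\epsilon) \;=\; A\epsilon + B s^2 + O(\epsilon|s|) + O(|s|^3),$$
with $A = (1-F_{\max}(p^*))^{-1}$ and $B = (p^*F''_{\max}(p^*) + 2F'_{\max}(p^*))/(2(1-F_{\max}(p^*)))$; positivity of $B$ follows from the second-order condition $\Pi''(p^*)<0$. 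Substituting the leading behavior and using the exact antiderivative
$$\int \frac{\mathrm{d}s}{A\epsilon + Bs^2} \;=\; \frac{1}{\sqrt{AB\epsilon}} \arctan\!\left(s\sqrt{B/(A\epsilon)}\right) + C,$$
each definite integral tends to $(\pi/2)/\sqrt{AB\epsilon}$ as $\epsilon\to 0$ since, after rescaling $u=s\sqrt{B/(A\epsilon)}$, the endpoints of integration (which are $\Theta(1)$ in $s$) become $\Theta(\epsilon^{-1/2})$ in $u$ and the arctan saturates at $\pi/2$. Summing the two halves and multiplying by $\sqrt{\epsilon}$ gives
$$\frac{\pi}{\sqrt{AB}} \;=\; \sqrt{\frac{2\pi^2\,(1-F_{\max}(p^*))^2}{p^*F''_{\max}(p^*) + 2F'_{\max}(p^*)}},$$
which is the desired bound.

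The main obstacle is to justify the substitution of the Taylor approximation inside the integral: the integrand blows up like $1/(A\epsilon)$ at $s=0$, while the remainder $O(\epsilon|s|)+O(|s|^3)$ is not a priori negligible compared to $A\epsilon+Bs^2$ on a fixed interval. I would handle this by fixing a small $\eta\in(0,\delta)$ and splitting the integration into an \emph{inner} region $|s|\le\eta$, where the remainder is dominated by $\tfrac{1}{2}(A\epsilon+Bs^2)$ once $\eta$ is small enough (so $f(p^*+s;\epsilon) \ge \tfrac{1}{2}(A\epsilon+Bs^2)$), and an \emph{outer} region $\eta<|s|<\delta$, where $f(\cdot;\epsilon)$ is bounded below uniformly in $\epsilon$ by some positive constant, making the outer contribution $O(1)=o(\epsilon^{-1/2})$. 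A dominated-convergence argument applied to $\int\mathrm{d}u/(1+u^2)$ after the rescaling then gives the asymptotic $\pi/\sqrt{AB\epsilon}$, closing the proof.
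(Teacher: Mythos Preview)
Your overall architecture matches the paper's: split at $\bar p(\epsilon)$, apply Lemmas~\ref{lemma:stopping-time-difference-equation-1} and~\ref{lemma:stopping-time-difference-equation-2}, Taylor-expand near $p^*$, and extract the $\arctan$ integral. Your endgame via an inner/outer split and dominated convergence is in fact cleaner than the paper's, which keeps the cubic remainder $-L|s|^3$ explicitly, writes $\xi_0(p^*+s;\epsilon)=a_0+b_0 s+c_0 s^2-L|s|^3$, and integrates $1/\xi_0$ by a full partial-fraction decomposition before taking $\epsilon\to 0$.

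There is, however, a genuine gap in your handling of $\tilde f$. The assertion that $\tilde f\to 0$ uniformly on $(p^*-\delta,p^*+\delta)$ as $\epsilon\to 0$ is false: for $t>p^*$ one has $\tilde f(t;0)=t-M^{-1}(t;0)>0$, because $M(\cdot;0)$ fixes $p^*$ and satisfies $M(s;0)>s$ for $s\neq p^*$. So ``$\tilde f$ small, hence replace by $f$'' does not work as stated. What is actually needed is that $\tilde f(p^*+s;\epsilon)$ has the \emph{same} leading expansion $A\epsilon+Bs^2+O(\epsilon|s|)+O(|s|^3)$ as $f$. The paper obtains this by Taylor-expanding $\zeta_2(t;\epsilon)=t-F_{\max}^{-1}\!\big(1-\tfrac{\Pi^*+\epsilon}{t}\big)$ directly and verifying, via the first-order condition $p^*F_{\max}'(p^*)=1-F_{\max}(p^*)$, that its coefficients $(a_2,b_2,c_2)$ agree with those of $f$ to the required order in $\epsilon$. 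Your implicit relation $\tilde f(t)=f(t-\tilde f(t))$ can also be made to yield this: on the inner window $|s|\le\eta$ one first gets the crude bound $\tilde f=O(\epsilon+s^2)$ (since $\tilde f(t)=f(s')$ with $s'\in[\bar p-p^*,s]$), and then substituting back gives $\tilde f(p^*+s)=A\epsilon+B(s-\tilde f)^2+\cdots=A\epsilon+Bs^2+O(\epsilon|s|+|s|^3)$. Either route closes the gap, after which your rescaling $u=s\sqrt{B/(A\epsilon)}$ and dominated convergence (with dominator $2/(1+u^2)$ coming from $f,\tilde f\ge\tfrac12(A\epsilon+Bs^2)$ on the inner window) legitimately deliver $\pi/\sqrt{AB}$.
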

\begin{proof}
    Let $f(t;\epsilon)=M(t;\epsilon)-t$ and $\bar{p}$ be the minimiser of $f$. By the triangle like inequality,
    $$
    k(p^*-\delta,p^*+\delta;\epsilon)\le k(p^*-\delta,\bar{p};\epsilon)+k(\bar{p},p^*+\delta;\epsilon).
    $$
    Choosing $\delta$ according to Lemma \ref{lemma:delta-choice} allows us to apply Lemmas \ref{lemma:stopping-time-difference-equation-1} and \ref{lemma:stopping-time-difference-equation-2}, which give us the following upper bounds on the above two terms
    \begin{align}
        k(p^*-\delta,\bar{p};\epsilon)&\le 1+\int_{p^*-\delta}^{\bar{p}}f(t;\epsilon)^{-1}\mathrm{d}t,\label{eq:bound-left}\\
        k(\bar{p},p^*+\delta;\epsilon)&\le1+\int_{\bar{p}}^{p^*+\delta}\left(t-M^{-1}(t;\epsilon)\right)^{-1}\mathrm{d}t.\label{eq:bound-right}
    \end{align}
    $f(t;\epsilon)$ is known and $t-M^{-1}(t;\epsilon)=t-F^{-1}_{\max}\left(1-\frac{\Pi^*+\epsilon}{t}\right)$.
    We now proceed with the limit argument.    
    Let $\zeta_1(t;\epsilon)=f(t;\epsilon)$ and $\zeta_2(t;\epsilon)=t-M^{-1}(t;\epsilon)$. For $|s|\le \delta$ we have by Taylor's theorem that,
    $$
    \zeta_i(p^*+s)\ge\xi_i(p^*+s;\epsilon):=\zeta_i(p^*)+\zeta_i'(p^*)s+\frac{1}{2}\zeta_i''(p^*)s^2-L|s|^3, 
    $$
    where $L=\max\{L_1,L_2\}$ and $L_i=\sup\{|\frac{1}{6}\zeta_i'''(p^*+s)|:|s|\le\delta\}$. The $\xi_i$'s can be written as
    $$
    \xi_i(p^*+s;\epsilon):=a_i+b_is+c_is^2-L|s|^3,
    $$
    where
    \begin{align*}    
        a_1&=\frac{\Pi^*+\epsilon}{1-F_{\max}(p^*)}-p^*=\frac{\epsilon}{1-F_{\max}(p^*)},\\
        a_2&=p^*-F^{-1}_{\max}\left(1-\frac{\Pi^*+\epsilon}{p^*}\right),\\
        b_1&=\frac{(\Pi^*+\epsilon)F'_{\max}(p^*)}{(1-F_{\max}(p^*))^2}-1,\\
        b_2&=1-\frac{\Pi^*+\epsilon}{(p^*)^2F'_{\max}\left(F^{-1}_{\max}\left(1-\frac{\Pi^*+\epsilon}{p^*}\right)\right)},\\
        c_1&=(\Pi^*+\epsilon)\frac{F''_{\max}(p^*)(1-F_{\max}(p^*))+2F'_{\max}(p^*)^2}{2(1-F_{\max}(p^*))^3},\\
        c_2&=\frac{(\Pi^*+\epsilon)}{(p^*)^3F'_{\max}\left(F^{-1}_{\max}\left(1-\frac{\Pi^*+\epsilon}{p^*}\right)\right)}+\frac{(\Pi^*+\epsilon)^2F''_{\max}\left(F^{-1}_{\max}\left(1-\frac{\Pi^*+\epsilon}{p^*}\right)\right)}{2(p^*)^4F'_{\max}\left(F^{-1}_{\max}\left(1-\frac{\Pi^*+\epsilon}{p^*}\right)\right)^3}.
    \end{align*}
    We will now show that $a_i=a\epsilon+O(\epsilon^2)$, $b_i=O(\epsilon)$ and $c_i=c+O(\epsilon)$ where $a=(1-F_{\max}(p^*))^{-1}$ and $c=\frac{p^*F''_{\max}(p^*)+2F'_{\max}(p^*)}{2(1-F_{\max}(p^*))}$. 
    
    Recall that $1-F_{\max}(p^*)=p^*F'_{\max}(p^*)$, since $p^*$ solves the first order condition.
    Firstly, $a_1=a\epsilon$ and $a_2=a\epsilon+O(\epsilon^2)$ follows from the fact that $a_2=0$ for $\epsilon=0$ and that 
    \begin{align*}
        \left. \frac{\mathrm{d} a_2}{\mathrm{d} \epsilon} \right|_{\epsilon = 0}&=
    \left. \left(p^*F'_{\max}\left(F^{-1}_{\max}\left(1-\frac{\Pi^*+\epsilon}{p^*}\right)\right)\right)^{-1} \right|_{\epsilon = 0}\\
    &=(p^*F'_{\max}(p^*))^{-1}=(1-F_{\max}(p^*))^{-1}.
    \end{align*}
    Secondly, for $\epsilon=0$ 
    \begin{align*}    
        b_1&=\frac{(\Pi^*)F'_{\max}(p^*)}{(1-F_{\max}(p^*))^2}-1=\frac{p^*F'_{\max}\left(p^*\right)}{1-F_{\max}(p^*)}-1=0,\\
        b_2&=1-\frac{\Pi^*}{(p^*)^2F'_{\max}\left(F^{-1}_{\max}\left(1-\frac{\Pi^*}{p^*}\right)\right)}=1-\frac{1-F_{\max}(p^*)}{p^*F'_{\max}\left(p^*\right)}=0.
    \end{align*}
    Therefore $b_1$ and $b_2$ are $O(\epsilon)$. Lastly, for $\epsilon=0$ 
    \begin{align*}    
        c_1&=(\Pi^*)\frac{F''_{\max}(p^*)(1-F_{\max}(p^*))+2F'_{\max}(p^*)^2}{2(1-F_{\max}(p^*))^3}\\
        &=\frac{p^*F''_{\max}(p^*)(1-F_{\max}(p^*))+2p^*F'_{\max}(p^*)^2}{2(1-F_{\max}(p^*))^2}\\
        &=\frac{p^*F''_{\max}(p^*)+2F'_{\max}(p^*)}{2(1-F_{\max}(p^*))},\\
        c_2&=\frac{\Pi^*}{(p^*)^3F'_{\max}\left(F^{-1}_{\max}\left(1-\frac{\Pi^*}{p^*}\right)\right)}+\frac{(\Pi^*)^2F''_{\max}\left(F^{-1}_{\max}\left(1-\frac{\Pi^*}{p^*}\right)\right)}{2(p^*)^4F'_{\max}\left(F^{-1}_{\max}\left(1-\frac{\Pi^*}{p^*}\right)\right)^3}\\
        &=\frac{1-F_{\max}(p^*)}{(p^*)^2F'_{\max}\left(p^*\right)}+\frac{(1-F_{\max}(p^*))^2F''_{\max}\left(p^*\right)}{2(p^*)^2F'_{\max}\left(p^*\right)^3}\\
        &=\frac{1}{p^*}+\frac{F''_{\max}\left(p^*\right)}{2F'_{\max}\left(p^*\right)}=\frac{p^*F''_{\max}(p^*)+2F'_{\max}(p^*)}{2p^*F'_{\max}(p^*)}=\frac{p^*F''_{\max}(p^*)+2F'_{\max}(p^*)}{2(1-F_{\max}(p^*))}.
    \end{align*}
    Therefore, $c_i=c+O(\epsilon)$. From (\ref{eq:bound-left}) and (\ref{eq:bound-right}) and the fact that $\zeta_i$ is lower bounded by $\xi_i$ we obtain that
    \begin{align*}
        \lim_{\epsilon\to 0}\epsilon^{\frac{1}{2}}\left(k(p^*-\delta,\bar{p};\epsilon)+k(\bar{p},p^*+\delta;\epsilon)\right)&\le\lim_{\epsilon\to 0}\epsilon^{\frac{1}{2}}\left(\int_{p^*-\delta}^{\bar{p}}\frac{\mathrm{d}t}{\zeta_1(t;\epsilon)}+\int_{\bar{p}}^{p^*+\delta}\frac{\mathrm{d}t}{\zeta_2(t;\epsilon)}\right)\\
        &\le\lim_{\epsilon\to 0}\epsilon^{\frac{1}{2}}\left(\int_{p^*-\delta}^{\bar{p}}\frac{\mathrm{d}t}{\xi_1(t;\epsilon)}+\int^{p^*+\delta}_{\bar{p}}\frac{\mathrm{d}t}{\xi_2(t;\epsilon)}\right).
    \end{align*}
    The value of the right-hand side is driven by the first order dependence of $a_i,b_i$ and $c_i$ on $\epsilon$. Since these are the same for $i=1$ and $i=2$, we have that for $\xi_0(p^*+s;\epsilon):=a_0+b_0s+c_0s^2-L|s|^3$ with $a_0=a\epsilon+O(\epsilon^2)$, $b_0=O(\epsilon)$ and $c_0=c+O(\epsilon)$ that 
    \begin{align}
        \lim_{\epsilon\to 0}\epsilon^{\frac{1}{2}}\left(\int_{p^*-\delta}^{\bar{p}}\frac{\mathrm{d}t}{\xi_1(t;\epsilon)}+\int^{p^*+\delta}_{\bar{p}}\frac{\mathrm{d}t}{\xi_2(t;\epsilon)}\right)=\lim_{\epsilon\to 0}\epsilon^{\frac{1}{2}}\int_{p^*-\delta}^{p^*+\delta}\frac{\mathrm{d}t}{\xi_0(t;\epsilon)}.\label{eq:xi-0}
    \end{align}
    The function $\xi_0$ has roots above and below $p^*$, so $\delta$ needs to be small enough such that there are no roots in the interval $(p^*-\delta,p^*+\delta)$. To do so we make use of two facts. Firstly, as $\epsilon$ decreases so does $\xi_0$, resulting in roots closer to $p^*$. For $\epsilon=0$, $\xi_0$ has roots at $p^*\pm\frac{c}{L}$, so $\delta <\frac{c}{L}$ is necessary. Moreover, for $\epsilon=0$, $\xi_0$ has local maxima at $p^*\pm\frac{2}{3}\frac{c}{L}$, so we pick $\delta\le\frac{2}{3}\frac{c}{L}$. 
    Such a $\delta$ exists, since $L$ is continuous in $\delta$ at zero, because $F_{\max}$ is three times continuously differentiable at $p^*$. This is not in conflict with Lemma \ref{lemma:delta-choice}, since $\delta$ can always be chosen smaller.
    
    Denote the unique real root of $a_0+b_0t+c_0t^2+Lt^3=0$ by $r_1$ and the unique real root of $a_0+b_0t+c_0t^2-Lt^3=0$ by $r_2$. We have that $r_1=-\frac{c}{L}+O(\epsilon)$ and $r_2=\frac{c}{L}+O(\epsilon)$.   
    Partial fraction decomposition yields that
    $$
    \frac{1}{\xi_0(p^*+s;\epsilon)}=\begin{cases}
        \lambda_1\left(\frac{1}{s-r_1}-\frac{Ls+c_0+2r_1L}{Ls^{2}+\left(c_0+r_1L\right)s-a_0/r_1}\right),&\text{if } s\le 0,\\
        \lambda_2\left(\frac{1}{s-r_{2}}-\frac{Ls+2r_{2}L-c_0}{Ls^{2}+\left(r_{2}L-c_0\right)s+a_0/r_{2}}\right),&\text{if } s> 0,
    \end{cases}
    $$
    where $\lambda_1=\frac{r_1}{r_1^{2}\left(c_0+2r_1L\right)-a_0}$ and $\lambda_2=\frac{r_{2}}{r_{2}^{2}\left(c_0-2r_{2}L\right)-a_0}$. (\ref{eq:xi-0}) can now be evaluated,
    \begin{align*}
        \int_{p^*-\delta}^{p^*+\delta}\frac{\mathrm{d}s}{\xi_0(s;\epsilon)}=&\lambda_1\int_{-\delta}^0\left(\frac{1}{s-r_1}-\frac{Ls+c_0+2r_1L}{Ls^{2}+\left(c_0+r_1L\right)s-a_0/r_1}\right)\mathrm{d}s\\
        +&\lambda_2\int_0^{\delta}\left(\frac{1}{s-r_{2}}-\frac{Ls+2r_{2}L-c_0}{Ls^{2}+\left(r_{2}L-c_0\right)s+a_0/r_{2}}\right)\mathrm{d}s\\
        =&\lambda_1\log\left|r_{1}\right|-\lambda_1\log\left|r_{1}+\delta\right|-\lambda_1\frac{1}{2}\log\left|\frac{a_{0}}{r_{1}}\right|\\-&\lambda_1\frac{2(c_{0}+2r_{1}L)-\left(c_{0}+r_{1}L\right)}{\sqrt{-4L\frac{a_{0}}{r_{1}}-\left(c_{0}+r_{1}L\right)^{2}}}\arctan\left(\frac{\left(c_{0}+r_{1}L\right)}{\sqrt{-4L\frac{a_{0}}{r_{1}}-\left(c_{0}+r_{1}L\right)^{2}}}\right)\\+&\lambda_1\frac{1}{2}\log\left|L\delta^{2}-\left(c_{0}+r_{1}L\right)\delta-\frac{a_{0}}{r_{1}}\right|\\+&\lambda_1\frac{2(c_{0}+2r_{1}L)-\left(c_{0}+r_{1}L\right)}{\sqrt{-4L\frac{a_{0}}{r_{1}}-\left(c_{0}+r_{1}L\right)^{2}}}\arctan\left(\frac{-2L\delta+\left(c_{0}+r_{1}L\right)}{\sqrt{-4L\frac{a_{0}}{r_{1}}-\left(c_{0}+r_{1}L\right)^{2}}}\right)\\
        +&\lambda_2\left(\log\left|\delta-r_{2}\right|-\log\left|r_{2}\right|+\frac{1}{2}\log\left|\frac{a_{0}}{r_{2}}\right|\right)\\+&\lambda_2\frac{2\left(2r_{2}L-c_{0}\right)-\left(r_{2}L-c_{0}\right)}{\sqrt{4L\frac{a_{0}}{r_{2}}-\left(r_{2}L-c_{0}\right)^{2}}}\arctan\left(\frac{\left(r_{2}L-c_{0}\right)}{\sqrt{4L\frac{a_{0}}{r_{2}}-\left(r_{2}L-c_{0}\right)^{2}}}\right)\\-&\lambda_2\frac{1}{2}\log\left|L\delta^{2}+\left(r_{2}L-c_{0}\right)\delta+\frac{a_{0}}{r_{2}}\right|\\-&\lambda_2\frac{2\left(2r_{2}L-c_{0}\right)-\left(r_{2}L-c_{0}\right)}{\sqrt{4L\frac{a_{0}}{r_{2}}-\left(r_{2}L-c_{0}\right)^{2}}}\arctan\left(\frac{2L\delta+\left(r_{2}L-c_{0}\right)}{\sqrt{4L\frac{a_{0}}{r_{2}}-\left(r_{2}L-c_{0}\right)^{2}}}\right)
    \end{align*}
    Every term except the fourth and eighth vanishes when we multiply with $\epsilon^{\frac{1}{2}}$ and take the limit. The arctans converge to $\pm\pi$, as their arguments diverge, leaving only simple terms. Rudimentary calculus reveals that $\epsilon^{\frac{1}{2}}\int_{p^*-\delta}^{p^*+\delta}\frac{\mathrm{d}s}{\xi_0(s;\epsilon)}$ converges to $\pi(ac)^{-\frac{1}{2}}$. Substituting the expressions of $a$ and $c$ yields the result.
\end{proof}
\newpage

\end{document}